\documentclass[12pt,reqno]{amsart}

\usepackage[margin=1.3in]{geometry}
\usepackage[utf8]{inputenc}
\usepackage[T1]{fontenc}
\usepackage[greek,english]{babel}
\usepackage{teubner}
\usepackage{comment}
\usepackage{graphicx}
\usepackage{csquotes}
\usepackage{microtype}
\usepackage[skip=.5\baselineskip]{parskip}
\usepackage{anyfontsize}

\usepackage[dvipsnames]{xcolor}
\definecolor{highlight}{cmyk}{90,99,0,0}

\usepackage[textsize=footnotesize]{todonotes}
\setuptodonotes{color=blue!30}
\usepackage{amsmath,amssymb,amsthm,mathtools}
\usepackage{mathrsfs}
\usepackage{euscript}
\usepackage[cal=cm]{mathalfa}
\usepackage{xifthen}
\usepackage{thmtools}
\numberwithin{equation}{section}

\usepackage[lining]{libertine}
\usepackage{courier}
\usepackage[libertine,libaltvw,liby]{newtxmath}
\makeatletter
    \renewcommand*\libertine@figurestyle{LF}
\makeatother

\usepackage{enumitem}
\setlist[enumerate,1]{label=(\roman*),itemsep=0.9ex}
\setlist[itemize]{itemsep=0.9ex}

\usepackage[colorlinks=true,allcolors=highlight,hyperfootnotes=false,linktocpage=true]{hyperref}
\usepackage{url}
\usepackage{cleveref}

\usepackage{tikz,tikz-cd}
\usepackage{pgfplots}
\pgfplotsset{compat=1.18}
\usepackage{subcaption}

\hypersetup{bookmarksdepth = 3}

\makeatletter
\def\@tocline#1#2#3#4#5#6#7{\relax
  \ifnum #1>\c@tocdepth
  \else
    \par \addpenalty\@secpenalty\addvspace{#2}%
    \begingroup \hyphenpenalty\@M
    \@ifempty{#4}{%
      \@tempdima\csname r@tocindent\number#1\endcsname\relax
    }{%
      \@tempdima#4\relax
    }%
    \parindent\z@ \leftskip#3\relax \advance\leftskip\@tempdima\relax
    \rightskip\@pnumwidth plus4em \parfillskip-\@pnumwidth
    #5\leavevmode\hskip-\@tempdima
    \ifcase #1
    \or\or \hskip 1em \or \hskip 2em \else \hskip 3em \fi
    #6\nobreak\relax
    \dotfill\hbox to\@pnumwidth{\@tocpagenum{#7}}\par
    \nobreak
    \endgroup
  \fi}
\makeatother

\DeclareMathOperator{\Tot}{Tot}
\DeclareMathOperator{\Exp}{Exp}

\newcommand{\mr}{\mathrm}

\newcommand{\msf}{\mathsf}

\newcommand{\ri}{\mathrm{i}}
\newcommand{\re}{\mathrm{e}}

\newcommand{\mgp}[1]{\mathsf{#1}}
\newcommand{\invariantsfont}[1]{\mathsf{#1}}
\newcommand{\modulifont}[1]{#1}
\newcommand{\Mbar}{\overline{\modulifont{M}}}
\newcommand{\vir}{\mathrm{virt}}

\newcommand{\Aaff}[1]{\mathbb{C}^{#1}}
\renewcommand{\Gm}[1][]{%
  \ifthenelse{\isempty{#1}}
  {\mathbb{C}^\times}
  {(\mathbb{C}^\times)^{#1}}
}

\newcommand{\argvec}[3][]{%
  \ifthenelse{\isempty{#1}}
  {\boldsymbol{#2}_{#3}}
  {\boldsymbol{#2}_{#3}^{\boldsymbol{#1}}}
}
\newcommand{\epsvec}[2][]{\argvec[#1]{\epsilon}{#2}}
\newcommand{\zvec}[2][]{\argvec[#1]{z}{#2}}

\newcommand{\CYf}{Z}
\newcommand{\GW}{\invariantsfont{GW}}
\newcommand{\GWargs}[3]{\mathsf{F}_{#2}^{\phantom{Y}}(#1,#3)}
\newcommand{\GWdiscargs}[3]{\mathsf{Z}_{#2}^{\GW}(#1,#3)}
\newcommand{\PT}{\invariantsfont{PT}}
\newcommand{\PTargs}[4]{\mathsf{Z}_{#3}^{\PT}(#1 \subset #2,#4)}
\newcommand{\DT}{\invariantsfont{DT}}
\newcommand{\DTargs}[4]{\mathsf{Z}_{#3}^{\DT}(#1 \subset #2,#4)}
\newcommand{\Mtwo}{\invariantsfont{M2}}

\newcommand{\ch}[1][]{\mathrm{ch}_{#1}}
\newcommand{\td}[1][]{\mathrm{td}_{#1}}

\newcommand{\Chow}{\mathsf{CH}}
\newcommand{\hhh}{\mathsf{H}}

\newcommand{\HodgeLambda}[2][]{\Lambda_{#1}(#2)}
\newcommand{\HodgeH}{\mathsf{H}}
\newcommand{\HodgeHst}{\mathsf{H}^{\mathrm{st}}}

\newcommand{\vertex}{\mathsf{V}}
\newcommand{\vertexlimit}{\mathsf{U}}
\newcommand{\gamfct}[2]{\gamma(#1;#2)}
\newcommand{\cs}{\varsigma}
\newcommand{\mathqoppa}{\mathord{\text{\ensuregreek{\qoppa}}}}

\makeatletter
\def\makeCal#1{\expandafter\newcommand\csname c#1\endcsname{\mathcal{#1}}}
\def\makeBB#1{\expandafter\newcommand\csname b#1\endcsname{\mathbb{#1}}}
\def\makeFrak#1{\expandafter\newcommand\csname f#1\endcsname{\mathfrak{#1}}}
\def\makeScr#1{\expandafter\newcommand\csname s#1\endcsname{\mathscr{#1}}}
\def\makeSF#1{\expandafter\newcommand\csname sf#1\endcsname{\mathsf{#1}}}

\count@=0
\loop
  \advance\count@ 1
  \edef\y{\@Alph\count@}
  \expandafter\makeCal\y
  \expandafter\makeBB\y
  \expandafter\makeFrak\y
  \expandafter\makeScr\y
  \expandafter\makeSF\y
\ifnum\count@<26
\repeat
\makeatother

\theoremstyle{plain}
\newtheorem{thm}{Theorem}[section]
\newtheorem{prop}[thm]{Proposition}
\newtheorem{lem}[thm]{Lemma}
\newtheorem{cor}[thm]{Corollary}
\newtheorem{conj}[thm]{Conjecture}

\theoremstyle{definition}

\newtheorem{rmk}[thm]{Remark}
\newtheorem{example}[thm]{Example}
\newtheorem{defprop}[thm]{Definition/Proposition}

\theoremstyle{plain}
\newtheorem{innercustomthm}{Theorem}
\newenvironment{customthm}[1]
{\renewcommand\theinnercustomthm{#1}\innercustomthm}
{\endinnercustomthm}

\AddToHook{cmd/appendix/before}{\crefalias{section}{appendix}}

\crefname{equation}{Equation}{Equations}
\crefname{eqnarray}{Equation}{Equations}

\crefname{thm}{Theorem}{Theorems}
\crefname{prop}{Proposition}{Propositions}
\crefname{lem}{Lemma}{Lemmas}
\crefname{cor}{Corollary}{Corollaries}
\crefname{conj}{Conjecture}{Conjectures}
\Crefname{conj}{Conjecture}{Conjectures}
\crefname{defn}{Definition}{Definitions}
\crefname{rmk}{Remark}{Remarks}
\crefname{example}{Example}{Examples}
\crefname{defprop}{Definition/Proposition}{Definitions/Propositions}

\crefname{innercustomthm}{Theorem}{Theorems}
\crefname{innercustomcor}{Corollary}{Corollaries}
\crefname{innercustomconj}{Conjecture}{Conjectures}

\crefname{section}{Section}{Sections}
\crefname{appendix}{Appendix}{Appendices}
\crefname{figure}{Figure}{Figures}
\crefname{table}{Table}{Tables}

\usepackage[
  backend=biber,
  style=alphabetic,
  natbib=true,
  url=false,
  doi=true,
  eprint=true,
  isbn=false,
  maxcitenames=5,
  maxbibnames=4,
  maxalphanames=4,
  sortcites=true,
  giveninits=true
]{biblatex}

\AtEveryBibitem{\clearlist{language}}
\renewbibmacro{in:}{%
  \ifboolexpr{
    test {\ifentrytype{article}}
    or
    test {\ifentrytype{inproceedings}}
  }
  {}
  {\printtext{\bibstring{in}\intitlepunct}}%
}

\DeclareFieldFormat[article,inbook,incollection,inproceedings,patent,thesis,unpublished,techreport,misc,book]{title}{\mkbibquote{#1}}

\title{Membranes and Maps}

\author[A. Giacchetto]{Alessandro Giacchetto}
\address{Department Mathematik, ETH Z\"urich, CH-8006 Z\"urich, Switzerland}
\email[AG]{alessandro.giacchetto@math.ethz.ch}

\author[R. Pandharipande]{Rahul Pandharipande}
\email[RP]{rahul.pandharipande@math.ethz.ch}

\author[Y. Schuler]{Yannik Schuler}
\email[YS]{yannik.schuler@math.ethz.ch}

\date{September 2026}

\begin{document}

\begin{abstract}
    In positive degree, equivariant Gromov--Witten invariants of Calabi--Yau fivefolds are expected to admit an interpretation in terms of M2-branes, while we conjecture that constant maps are governed by the corresponding supergravity index. We make the first expectation precise by proposing a modular interpretation of M2-branes supported on several smooth curves meeting at an $n$-fold point. Together, the two pictures yield conjectural formulas for the Gromov--Witten invariants, which we translate into closed formulas for pointed and unpointed quintuple Hodge integrals. We show that these conjectures imply a K-theoretic Gromov--Witten/Pairs correspondence for local curves in degree one and link the generating series of constant maps to Donaldson--Thomas theory of points. We also prove the conjectures in two limits of the equivariant parameters. Along the way, we obtain new closed formulas for certain triple Hodge integrals.
\end{abstract}

\maketitle
\tableofcontents

\newpage
\section{Introduction}
\subsection{Overview}
A central speculation put forward in recent work of Brini and the third author \cite{BS24:refGW} is that the Gromov--Witten theory of a Calabi--Yau fivefold admits an interpretation in terms of M-theory. More precisely, for a Calabi--Yau fivefold $\CYf$ equipped with an action of a torus $\mgp{T}$ that leaves the Calabi--Yau form invariant, consider the Gromov--Witten series
\begin{equation} \label{gwseries}
    \GWargs{\CYf}{\mgp{T}}{\beta}
    =
    \sum_{g \geq 0}
        \int_{[\Mbar_g(\CYf,\beta)]^{\vir}_{\mgp{T}}} 1 \,
        \in
        \widehat{\mathsf{R}} \,,
\end{equation}
well-defined by $\mgp{T}$-equivariant localization whenever the $\mgp{T}$-fixed loci
\begin{equation}
    \Mbar_g(\CYf,\beta)^{\mgp{T}}
    \subset
    \Mbar_g(\CYf,\beta)
\end{equation}
of the moduli spaces of stable maps are proper.
The series \eqref{gwseries} lies in a suitable completion \smash{$\widehat{\mathsf{R}}$} of the field of fractions of $H^\bullet_{\mgp{T}}(\mathsf{pt})$.
For non-zero effective curve classes $\beta\in H_2(\CYf,\bZ)$, the expectation is that the series \eqref{gwseries} is governed by the moduli space $\Mtwo(\CYf,\beta)$ of M2-branes supported on curves in class $\beta$. When the curve class is primitive\footnote{
    Without the primitive assumption, the right-hand side of \eqref{GWM2} must be corrected by multi-covering contributions; see \cite[\S0.1 \& 0.6.2]{Sch26:TVCY5}.
} the speculation proposed in \cite{BS24:refGW} is
\begin{equation} \label{GWM2}
    \GWargs{\CYf}{\mgp{T}}{\beta}
    \overset{?}{=}
    \hat{\mathsf{A}}_{\mgp{T}}\big( \Mtwo(\CYf,\beta) \big) \,,
\end{equation}
where the right-hand side denotes the $\mgp{T}$-equivariant Hirzebruch \smash{$\hat{\mathsf{A}}$}-genus. Heuristically, \eqref{GWM2} reflects the expectation that the BPS sector of class $\beta$ is generated by M2-branes wrapping curves in class $\beta$, and that the supersymmetric index of the former is computed by a Dirac-type index on the corresponding moduli space.

The degree-zero sector should be of a different nature. The constraint $\beta=0$ corresponds to constant maps on the Gromov--Witten side. On the other hand, the moduli space of M2-branes with charge zero is empty, and we expect the relevant contribution to instead come from the fields of M-theory (more precisely, the fields of its associated low-energy eleven-dimensional supergravity theory).

At present, these expectations can serve at most as guiding principles. Indeed, very little is known mathematically about the moduli spaces $\Mtwo(\CYf,\beta)$ beyond the work of Nekrasov and Okounkov \cite{NO14:membranes}, which gives a modular interpretation for M2-branes whose support is either a reduced curve with at worst nodal singularities embedded in $\CYf$, or twice a smooth curve.

We study here the proposal \eqref{GWM2} for certain local models $\CYf$ for which $\hat{\mathsf{A}}_{\mgp{T}}\big(\Mtwo(\CYf,\beta)\big)$ is well defined and computable. We then extract from \eqref{GWM2} explicit conjectural identities for quintuple Hodge integrals, and prove these identities in two special regimes.

\subsection{Stable maps and membranes}
To make the above picture concrete in the effective curve class case, we study local models $\CYf_n$ in which the relevant curve class is represented by a union of rational curves meeting at a single $n$-fold point (for $1\leq n \leq 5$). The proposal \eqref{GWM2} then leads to a precise conjectural identification between Gromov--Witten invariants and the $\mgp{T}$-equivariant \smash{$\hat{\mathsf{A}}$}-genus of a moduli space of stable maps.

Let $\mgp{T}$ be a four-dimensional subtorus of $\Gm[5]$ acting on $\Aaff{5}$ and preserving the holomorphic volume form. We consider partial toric compactifications
\begin{equation}
    \Aaff{5} \subset \CYf_n
\end{equation}
satisfying the following properties:
\begin{enumerate}
    \item[(i)] $\CYf_n$ is $\mgp{T}$-equivariantly Calabi--Yau,

    \item[(ii)] the only $\mgp{T}$-invariant complete curves of $\CYf_n$ are the closures $C_1,\ldots,C_n\subset \CYf_n$ of the first $n$ coordinate axes of $\Aaff{5}$,
    
    \item[(iii)] the classes $[C_1],\ldots,[C_n]\in H_2(\CYf_n,\bZ)$ are linearly independent.
\end{enumerate}
Explicit constructions of $\CYf_n$ will be given in \cref{sec: membrane moduli}. The following conjecture proposes a modular interpretation of M2-branes supported on these rational curves meeting at an $n$-fold point.

\begin{conj}
    \label{conj:GW:vertex}
    For $n\in\{1,\ldots,5\}$ and curve class $\beta = [C_1]+\cdots+[C_n]$, the equivariant Gromov--Witten theory of $\CYf_n$ is given by
    \begin{equation}
    \label{eq: GW compact vertex formula intro}
       \GWargs{\CYf_n}{\mgp{T}}{\beta} 
       =
       \hat{\mathsf{A}}_{\mgp{T}}\bigl( \Mbar_{0}(\CYf_n,\beta) \bigr) \,,
    \end{equation}
    where $\Mbar_{0}(\CYf_n,\beta)$ denotes the moduli space of genus-zero stable maps to $\CYf_n$ in curve class $\beta$.
\end{conj}

Since the domains of stable maps have at worst ordinary double points, the most intricate geometric predictions of \cref{conj:GW:vertex} occur in the cases $n=3,4,5$. Since \smash{$\hat{\mathsf{A}}_{\mgp{T}}\bigl( \Mbar_{0}(\CYf_n,\beta) \bigr)$} concerns a single elementary moduli space, the right-hand side of \eqref{eq: GW compact vertex formula intro} is geometrically much simpler than the left-hand side.

\Cref{conj:GW:vertex} can be formulated in the following more general setting.
Let $\CYf$ be a $\mgp{T}$-equivariant Calabi--Yau fivefold, and let $\beta \in H_2(\CYf,\bZ)$ be an effective curve class.
Suppose {\em all $\mgp{T}$-invariant complete curves $C \subset \CYf$ representing $\beta$ are reduced and have only disconnecting singularities of $n$-fold type}\footnote{
    A singularity $p \in C$ of $n$-fold type is disconnecting if the normalization of $C$ at $p$ has exactly $n$ connected components. A non-singular point $p \in C$ is regarded as a disconnecting singularity of $1$-fold type.
} (for $1\leq n \leq 5$), then
\begin{equation}
\label{eq: GW compact vertex formula intro 2}
    \GWargs{\CYf}{\mgp{T}}{\beta}
    =
    \hat{\mathsf{A}}_{\mgp{T}}\bigl( \Mbar_{h}(\CYf,\beta) \bigr) \,,
\end{equation}
where $h$ is the minimal genus of a domain representing $\beta$. In order for the right-hand side of \eqref{eq: GW compact vertex formula intro 2} to be well defined, we must also assume {\em the moduli space of stable maps to $\CYf$ representing $\beta$ is a virtually smooth scheme with proper $\mgp{T}$-fixed locus.}

Given our presentation of \Cref{conj:GW:vertex}, one might speculate whether the moduli space of M2-branes admits a stable map interpretation beyond primitive curve classes. Direct numerical calculations, however, indicate that this is not the case. Instead, our numerical experiments show, for instance, agreement with Nekrasov--Okounkov's proposal for the moduli of M2-branes supported on twice a smooth curve \cite[\S5.2]{NO14:membranes}. Their proposed modular interpretation is a map/sheaf hybrid and, as such, is generally different from stable maps. This suggests that one should look for alternative modular interpretations of the right-hand side of \eqref{eq: GW compact vertex formula intro} in order to obtain a mathematical description of M2-branes. One possibility is to allow target degenerations, rather than only domain degenerations as in \cite{KKO14:unramified,Nes24:unramified}. We leave this question for future research.

\subsection{Constant maps and supergravity}
\label{sec: const maps}
We expect the degree-zero sector to be governed by the bulk supergravity fields rather than by moduli spaces of wrapped M2-branes. The constant-map contribution to the equivariant Gromov--Witten theory of $\CYf$ should then be controlled by the index of the eleven-dimensional supergravity multiplet discussed in \cite{Nek05:ChasingM}; see also \cite{Nek09:InstM,Ok15:lectures}. We propose the following conjectural description of the degree-zero Gromov--Witten series.

\begin{conj}
    \label{conj:index}
    The constant-map contribution to the equivariant Gromov--Witten theory of $\CYf$ is given by
    \begin{equation}
        \label{eq: const maps sugra}
        \exp\Big( 2 \, \GWargs{\CYf}{\mgp{T}}{0} \Big)
        \sim
        \Exp\Big(
            \chi_{\mgp{T}}\big(
                \CYf, T_{\CYf}^\vee - T_{\CYf}
            \big)
        \Big) \,.
    \end{equation}
    Here $\Exp$ denotes the plethystic exponential, and the symbol $\sim$ means that the logarithms of both sides have the same $\mathrm{o}(1)$ tail in their asymptotic expansions.
\end{conj}

In \cref{prop: sugra index asymptotics} we also determine the negative-power part of the asymptotic expansion of the supergravity index appearing on the right-hand side of \cref{eq: const maps sugra} which is of independent physical interest.

The factor of $2$ on the left-hand side of \cref{eq: const maps sugra} is related to the appearance of both the tangent and cotangent sheaves on the right-hand side. The same factor already appears in the study of constant-map contributions to the Gromov--Witten theory of threefolds \cite[Eq.~(2)]{MNOP1}. As stressed in \cite{NO14:membranes}, a satisfactory physical explanation of this doubling phenomenon is still lacking.

\subsection{Quintuple Hodge integrals}
\Cref{conj:GW:vertex,conj:index} both admit reformulations in terms of {\em quintuple Hodge integrals}. We package Hodge integrals in the generating series
\begin{equation}
    \HodgeH(z_1,\ldots,z_n ; \epsilon_1,\ldots, \epsilon_5;u)
    =
    \sum_{g \geq 0}
        u^{2g-2}
        \int_{\Mbar_{g,n}} \frac{\prod_{i=1}^5 \HodgeLambda[g]{\epsilon_i}}{\prod_{j=1}^n (z_j^{-1} - \psi_j)} \,,
\end{equation}
where $\Lambda_g$ is the Hodge class
\begin{equation}
    \HodgeLambda[g]{\epsilon}
    =
    \sum_{k = 0}^g (-1)^k \lambda_k \, \epsilon^{g-1-k} \,.
\end{equation}
The variables $\epsilon_1,\ldots,\epsilon_5$ are identified with the tangent weights of $\mgp{T}$ and are subject to the Calabi--Yau condition $\sum_{i=1}^5 \epsilon_i = 0$.
In the present setting, the genus counting variable $u$ is redundant, since its exponent can be recovered from homogeneity in the variables $\epsilon_i$ and $z_j$. Nevertheless, including $u$ will be convenient.

The Gromov--Witten invariants of \cref{conj:GW:vertex} can be expressed in terms of Hodge integrals via the virtual localization formula \cite{GP97:virtloc}: the left-hand side of \eqref{eq: GW compact vertex formula intro} decomposes into products of pointed quintuple Hodge integrals. After computing the right-hand side of \eqref{eq: GW compact vertex formula intro}, we find that \cref{conj:GW:vertex} is equivalent to the following explicit formulas for pointed quintuple Hodge integrals.

\begin{thm} 
    \label{thm: 5 Hodge n point formula}
    Suppose that $\sum_{i=1}^5 \epsilon_i = 0$. \Cref{conj:GW:vertex} holds if and only if, for all $n\in\{1,\ldots,5\}$, we have
    \begin{equation}
    \label{eq: 5 Hodge n point formula}
        \HodgeH(\epsilon_1^{-1},\ldots, \epsilon_n^{-1} ; \epsilon_1,\ldots, \epsilon_5 ; u)
        =
        \vertex(\epsilon_1,\ldots,\epsilon_n;u)
        \prod_{j=1}^n \frac{1}{u^2}
        \prod_{i\neq j} \frac{\gamfct{-\epsilon_j^{-1}\epsilon_i}{u\epsilon_j}}{\epsilon_i} \,,
    \end{equation}
    where
    \begin{equation}
    \label{eq: vertex function}
        \vertex(\epsilon_1,\ldots,\epsilon_n;u)
        \coloneqq
        \begin{cases}
            1
            & n=1 \,, \\[0.75em]
            \dfrac{\cs(u \epsilon_3)\cs(u \epsilon_4)\cs(u \epsilon_5)}{\cs(u (\epsilon_1+\epsilon_2))}
            & n=2 \,, \\[0.95em]
            \cs(u \epsilon_4)^2 \cs(u \epsilon_5)^2
            & n=3 \,, \\[0.75em]
            \Big(\prod_{i=1}^5 \cs(u\epsilon_i) \Big) \, \cs(u\epsilon_5)^2 \sum_{i=1}^4 \kappa(u\epsilon_i)
            & n=4 \,, \\[0.75em]
            \Big(\prod_{i=1}^5 \cs(u \epsilon_i) \Big)^2 \Big(\big(\sum_{i=1}^5 \kappa(u \epsilon_i)\big)^2 -\frac{1}{4} \Big)
            & n=5 \,,
        \end{cases}
    \end{equation}
    and $\cs(x) = 2 \sinh \tfrac{x}{2}$, $\kappa(x) = \frac{d}{dx} \log{\cs(x)} = \tfrac{1}{2} \coth \tfrac{x}{2}$.
\end{thm}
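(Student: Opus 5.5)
The plan is to compute both sides of \eqref{eq: GW compact vertex formula intro} by torus localization and compare them term by term. Throughout, $\gamfct{\cdot}{\cdot}$ denotes the one-leg contribution from a $\mgp{T}$-fixed rational edge.

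\textbf{Gromov--Witten side.} Apply the virtual localization formula \cite{GP97:virtloc} to $\GWargs{\CYf_n}{\mgp{T}}{\beta}$. By property (ii), every $\mgp{T}$-fixed stable map has image $C_1\cup\cdots\cup C_n$. By property (iii) and $\beta=\sum[C_i]$, each $C_i$ is covered with degree one. The fixed loci are therefore indexed by graphs with $n$ degree-one edges. Each edge is attached at the origin to a contracted component of arbitrary genus, and at the point at infinity of $C_i$ to either a contracted component or nothing. Because $\beta$ has degree one on each $C_i$, every edge is glued to a single vertex over the origin.

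\begin{itemize}
\item The vertex over the origin carries $n$ markings and contributes the pointed quintuple Hodge integral $\HodgeH(\epsilon_1^{-1},\ldots,\epsilon_n^{-1};\epsilon_1,\ldots,\epsilon_5;u)$, with $\psi$-weights $\epsilon_j^{-1}$ coming from the tangent weights of the $C_j$.
\item The vertices at infinity contribute one-pointed series evaluated at the weights of the local geometry there.
\item The edges contribute explicit rational factors.
\end{itemize}

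Summing over genera, the GW series factors as
\[
\HodgeH(\epsilon_1^{-1},\ldots,\epsilon_n^{-1};\ldots)\cdot\prod_{j=1}^n E_j,
\]
where $E_j$ is the edge factor times the full contribution from the point at infinity of $C_j$.

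\textbf{$\hat{\mathsf{A}}$ side.} Compute $\hat{\mathsf{A}}_{\mgp{T}}(\Mbar_0(\CYf_n,\beta))$ by localization on the genus-zero moduli space. Its $\mgp{T}$-fixed locus consists of finitely many maps when $n\le 3$ and of $\Mbar_{0,n}$ when $n\ge 3$; for $n=3$ this is a point. The $\hat{\mathsf{A}}$-genus localizes to
\[
\int_F \frac{\hat{\mathsf{A}}(T_F)}{\hat{\mathsf{A}}\text{-type Euler class of } N^{\vir}},
\]
where each virtual normal weight $w$ contributes a factor $\cs(uw)^{\mp1}$, up to the normalization by $u$ and $\epsilon$. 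The normal/obstruction theory splits as a product over the edges $C_j$ together with a vertex correction at the $n$-fold point. The edge parts give the same one-leg pieces. Using the $n=1$ case as normalization, these are exactly $u^{-2}\prod_{i\ne j}\gamfct{-\epsilon_j^{-1}\epsilon_i}{u\epsilon_j}/\epsilon_i$ times the factor at infinity, which matches $E_j$.

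The vertex correction is what produces $\vertex$:
\begin{itemize}
\item $n=2$: the node smoothing and the nodal deformation weights give $\cs(u\epsilon_3)\cs(u\epsilon_4)\cs(u\epsilon_5)/\cs(u(\epsilon_1+\epsilon_2))$.
\item $n=3$: the unsmoothable directions give $\cs(u\epsilon_4)^2\cs(u\epsilon_5)^2$.
\item $n=4,5$: we must integrate over $\Mbar_{0,4}\cong\mathbb{P}^1$, respectively $\Mbar_{0,5}$, an $\hat{\mathsf{A}}$-class of a bundle built from $\psi$-classes and the obstruction bundle $\bigoplus_{i>n}\mathcal{O}\otimes\epsilon_i$. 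Expanding $x/\cs(x)$ and $\cs$ to first and second order in the Chern roots, and using $\int_{\Mbar_{0,4}}\psi_i=1$ and the intersection numbers on $\Mbar_{0,5}$, produces the derivatives $\kappa=(\log\cs)'$. This gives $\sum\kappa$ and $(\sum\kappa)^2-\tfrac14$, where the $-\tfrac14$ comes from $\kappa'=\tfrac14-\kappa^2$.
\end{itemize}

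\textbf{Conclusion.} Since the $E_j$ are invertible in $\widehat{\mathsf{R}}$, equality of the two sides for each $n$ is equivalent to \eqref{eq: 5 Hodge n point formula}. This yields the "if and only if".

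\textbf{Main obstacle.} The hard step is the $n=4,5$ computations over $\Mbar_{0,n}$. These require correctly identifying the $\mgp{T}$-equivariant virtual normal bundle at the $n$-fold point, including the $\psi$-dependent smoothing terms, and tracking its $\hat{\mathsf{A}}$-class to second order. I would first check the result against the degenerate specializations $\epsilon_5\to0$ and low genus.
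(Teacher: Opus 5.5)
Your treatment of $n\ge 2$ follows the paper's argument. You localize both sides, use the $n=1$ formula at each $\infty_j$ to turn the leg factor $E_j$ into the local-curve $\hat{\mathsf A}$-genus, and read off $\vertex$ from the central vertex.

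\textbf{The gap is the base case $n=1$, on which all of this rests.} Write $h(\epsilon)\coloneqq \HodgeH(\epsilon_1^{-1};\epsilon_1,\ldots,\epsilon_5;u)$ and $g(\epsilon)\coloneqq u^{-2}\prod_{i=2}^5 \gamfct{-\epsilon_1^{-1}\epsilon_i}{u\epsilon_1}/\epsilon_i$. For a splitting $d=(d_2,\ldots,d_5)$ with $\sum_i d_i=-2$, write $\sigma_d\epsilon\coloneqq(-\epsilon_1,\epsilon_2-d_2\epsilon_1,\ldots,\epsilon_5-d_5\epsilon_1)$ for the weights at $\infty$.

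For $n=1$ your $E_1$ equals $R_d\,h(\sigma_d\epsilon)$, where $R_d$ is the rational edge factor. So $E_1$ contains the unknown one-pointed series itself, and the conjecture only says $h(\epsilon)\,R_d\,h(\sigma_d\epsilon)=\hat{\mathsf A}_{\mgp T}(\Mbar_0(\CYf,[\bP^1]))$. Dividing by $E_1$ identifies $h(\epsilon)$ with $g(\epsilon)$ only if you already know $h(\sigma_d\epsilon)=g(\sigma_d\epsilon)$. That is circular.

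Relatedly, $\gamma$ is not ``the one-leg contribution'': it is the explicit Bernoulli series of \cref{defprop: gamma fct}. Even the easy direction needs an actual computation, namely $g(\epsilon)\,R_d\,g(\sigma_d\epsilon)=\prod_{i\ge 2}\prod_{k=0}^{d_i}\cs(u(\epsilon_i-k\epsilon_1))^{-1}$. This follows from the reflection identity $\gamfct{x}{u}\,\gamfct{-x-d}{u}=\prod_{k=0}^{d}\cS(u(x+k))^{-1}$ of \cref{lem: gamma fct reflect}. The same identity is what converts $E_j$ into the local-curve $\hat{\mathsf A}$-genus for $n\ge 2$, and your proposal never invokes it.

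\textbf{What is needed for conjecture $\Rightarrow$ formula at $n=1$.} You must use all splittings at once. Since $h(\epsilon)$ does not depend on $d$, comparing two splittings eliminates it, for instance $\cO(-1)^{\oplus 2}\oplus\cO^{\oplus 2}$ and $\cO(-2)\oplus\cO^{\oplus 3}$. This gives a difference equation for $h$ under $(\epsilon_2,\epsilon_3)\mapsto(\epsilon_2+\epsilon_1,\epsilon_3-\epsilon_1)$ and its permutations. The series $g$ satisfies the same equation, because $\gamfct{x+1}{u}=\cS(ux)\,\gamfct{x}{u}$.

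The coefficients are rational, and a rational function with a non-zero period is constant. So this only shows $h/g=C(u\epsilon_1)$ for some even numerical series $C$ with $C(0)=1$. One more input is needed to get $C=1$:
\begin{itemize}
\item The paper supplies it through the anti-diagonal specialization $\epsilon_4=-\epsilon_5$. There Mumford's relation reduces $h$ to a known triple Hodge series (\cref{thm: limits}).
\item Alternatively, the relation $h(\epsilon)h(\sigma_d\epsilon)=g(\epsilon)g(\sigma_d\epsilon)$ itself forces $C(u\epsilon_1)C(-u\epsilon_1)=C(u\epsilon_1)^2=1$.
\end{itemize}
Only after this is the $n=1$ formula available at the points $\infty_j$, and your $n\ge 2$ argument then goes through in the direction conjecture $\Rightarrow$ formula.

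Two minor points on the vertex computation:
\begin{itemize}
\item The genus-zero central component carries no obstruction bundle. The constant $\cs$-factors come from $T_0\CYf$ and the node terms.
\item For $n=5$ the $-\tfrac14$ also requires $\hat{\mathsf A}(T_{\Mbar_{0,5}})=1+\tfrac38[\mathrm{pt}]$. The second-order expansion of $\prod_j\cs(\epsilon_j-\psi_j)^{-1}$ alone gives $(\sum_j\kappa(\epsilon_j))^2-\tfrac58$.
\end{itemize}
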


In \cref{eq: 5 Hodge n point formula}, $\gamfct{x}{u} \in \bQ(x)\llbracket u \rrbracket$ is the unique series in $u$ satisfying $\gamfct{0}{u}=1$ and the shift equation
\begin{equation}
    \gamfct{x+1}{u}
    =
    \frac{\cs(ux)}{ux} \, \gamfct{x}{u} \,.
\end{equation}
More explicitly, we have the formula
\begin{equation}
    \gamfct{x}{u}
    =
    \exp\bigg(
        \sum_{k>0}
            \frac{B_{2k}}{2k \, (2k+1)!} \,
            B_{2k+1}(x) \,
            u^{2k}
    \bigg) \,.
\end{equation}
It is also worth noting that, although the function $\vertex(\epsilon_1,\ldots,\epsilon_n;u)$ is written in a case-by-case form, it admits a simple geometric interpretation. Namely, it is the ratio between the $\hat{\mathsf A}$-genus of the moduli space of stable maps to $\CYf_n$ and the product of the corresponding $\hat{\mathsf A}$-genera for the local curves $\Tot N_{C_j/\CYf_n}$:
\begin{equation}
    \vertex(\epsilon_1,\ldots,\epsilon_n;u)
    =
    \frac{
        \hat{\mathsf{A}}_{\mgp{T}}\bigl(\Mbar_0(\CYf_n,[C_1]+\cdots+[C_n])\bigr)
    }{
        \prod_{j=1}^n
        \hat{\mathsf{A}}_{\mgp{T}}\bigl(\Mbar_0(\Tot N_{C_j/\CYf_n},[C_j])\bigr)
    } \,.
\end{equation}
On the other hand, \cref{conj:index} leads to a conjectural formula for unpointed quintuple Hodge integrals.

\begin{thm} 
    \label{thm: five hodge no psi}
    Suppose that $\sum_{i=1}^5 \epsilon_i =0$. \Cref{conj:index} is equivalent to the equality
    \begin{equation}
        \label{eq: 5 lambda 0 psi formula}
        \HodgeH(~;\epsilon_1,\ldots, \epsilon_5;u)
        =
        \frac{1}{2}
        \sum_{g>1}
            \frac{B_{2g-2}}{2g-2}
            \left[
                \frac{\sum_{i=1}^5 2\sinh(u\epsilon_i)}
                {\prod_{i=1}^5 2\sinh(\tfrac{u\epsilon_i}{2}) }
            \right]_{2g-2} u^{2g-2} \,,
    \end{equation}
    where $B_k$ are Bernoulli numbers and $[\,\cdot\,]_{2g-2}$ denotes the coefficient of $u^{2g-2}$.
\end{thm}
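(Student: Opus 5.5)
Here \(\CYf=\Aaff{5}\) with the Calabi--Yau torus \(\mgp{T}\). We compute both sides of \eqref{eq: const maps sugra} explicitly as series in \(u\), then match the \(\mathrm{o}(1)\) tails.

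\emph{Gromov--Witten side.} The moduli space of constant maps is \(\Mbar_g\times\Aaff{5}\), with obstruction bundle \(\mathbb{E}^\vee\boxtimes T_{\Aaff{5}}\). Localization to the origin gives, for \(g\ge 2\),
\begin{equation*}
\int_{[\Mbar_g(\Aaff{5},0)]^{\vir}_{\mgp{T}}}1=\frac{1}{\epsilon_1\cdots\epsilon_5}\int_{\Mbar_g}\prod_{i=1}^5 e(\mathbb{E}^\vee\otimes\epsilon_i).
\end{equation*}
Since \(e(\mathbb{E}^\vee\otimes\epsilon)=\epsilon^{g}\HodgeLambda[g]{\epsilon}\) up to sign conventions, the integrand equals \(\prod_i\HodgeLambda[g]{\epsilon_i}\) times \(\prod_i\epsilon_i\). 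Inserting the genus variable \(u^{2g-2}\), which is harmless by homogeneity, we get \(\GWargs{\CYf}{\mgp{T}}{0}=\HodgeH(~;\epsilon;u)\) up to the unstable terms \(g=0,1\). These terms are polynomial or logarithmic in \(u\) and are exactly what the symbol \(\sim\) discards; the \(g=1\) term has to be checked against the constant part of the asymptotics. The total degree of the integrand matches \(\dim\Mbar_g\) precisely because \(\sum\epsilon_i=0\) controls the weights.

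\emph{Supergravity side.} We compute \(\chi_{\mgp{T}}(\Aaff{5},T^\vee-T)\) as a character. Writing \(t_i=e^{\epsilon_i}\),
\begin{equation*}
\chi_{\mgp{T}}(\Aaff{5},T^\vee-T)=\frac{\sum_i(t_i-t_i^{-1})}{\prod_i(1-t_i^{-1})},
\end{equation*}
with orientation conventions to be fixed. Using \(\prod_i t_i=1\), this is a symmetric expression proportional to \(\sum_i 2\sinh(\epsilon_i)/\prod_i 2\sinh(\epsilon_i/2)\). The plethystic logarithm \(\log\Exp f=\sum_{k\ge1}f(k\epsilon)/k\), with \(\epsilon\mapsto u\epsilon\), then gives a divergent sum \(\sum_k \frac1k F(ku\epsilon)\), where
\begin{equation*}
F(x)=\frac{\sum_i 2\sinh x_i}{\prod_i 2\sinh(x_i/2)}
\end{equation*}
is odd-homogeneous with a pole of order \(3\) at \(0\). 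Expanding \(F(x)=\sum_m F_m\) into homogeneous pieces of degree \(m\) gives formally \(\sum_m F_m\,u^m\zeta(1-m)\). By parity only odd \(m\) should survive, so it will be cleaner to regularize via Mellin transform or Euler--Maclaurin.

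\emph{Main obstacle.} The hardest step is the rigorous asymptotic expansion of \(\sum_k F(ku\epsilon)/k\) and the extraction of its \(\mathrm{o}(1)\) tail; this is the content of \cref{prop: sugra index asymptotics}, which we would invoke or reprove. The first approach to try is a Mellin transform in \(u\), whose poles at \(s=1-m\) produce \(\zeta(1-m)=-B_m/m\), together with the singular terms coming from the pole of \(F\). A parity check is also needed. Since \(F\) is odd under \(\epsilon\mapsto-\epsilon\), the relevant coefficient of \(u^{2g-2}\) must come from the correct shift, using the sign in \(T^\vee-T\). This yields \(\sum_g \frac{B_{2g-2}}{2g-2}[F]_{2g-2}u^{2g-2}\), possibly up to the overall factor \(\tfrac12\) and a sign, and the \(\tfrac12\) cancels the factor \(2\) on the left of \eqref{eq: const maps sugra}.

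Once both tails are identified, the equivalence is immediate: the right side of \eqref{eq: 5 lambda 0 psi formula} has no negative powers of \(u\) beyond the discarded ones. The \(g=1\) term vanishes in that sum or is absorbed into the non-tail part. Hence \cref{conj:index} holds if and only if \(\HodgeH\) matches term by term.
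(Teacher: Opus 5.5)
For $\CYf=\Aaff{5}$ you follow the paper's route. The obstruction bundle identifies $\GWargs{\Aaff{5}}{\mgp{T}}{0}$ with $\HodgeH(~;\epsilon_1,\ldots,\epsilon_5;u)$, and the Mellin asymptotics of the plethystic logarithm (\cref{prop: sugra index asymptotics}) supply the tail. There are, however, two genuine gaps.

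First, \cref{conj:index} is a statement about every Calabi--Yau fivefold with a Calabi--Yau torus action, and you never leave the local model. Specializing the conjecture to $\Aaff{5}$ does give the Hodge formula. The converse, that the formula implies the conjecture for arbitrary $\CYf$, is not addressed, and it needs an extra idea: equivariant Hirzebruch--Riemann--Roch. Let $\alpha_1,\ldots,\alpha_5$ be the equivariant Chern roots of $T_{\CYf}$. Then $\GWargs{\CYf}{\mgp{T}}{0}=\int_{[\CYf]_{\mgp{T}}}\bigl(\prod_i\alpha_i\bigr)\HodgeH(~;\alpha_1,\ldots,\alpha_5;u)$. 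Insert the formula and use $\sum_i\alpha_i=0$ to write $\sum_i 2\sinh(\alpha_i)/\prod_i\cS(\alpha_i)=\ch[\mgp{T}](T_{\CYf}-T_{\CYf}^\vee)\,\td[\mgp{T}](T_{\CYf})$. This turns the constant-map series into $\frac12\sum_{g>1}\frac{B_{2g-2}}{2g-2}\bigl[\ch[\mgp{T}]\,\chi_{\mgp{T}}(\CYf,T_{\CYf}-T_{\CYf}^\vee)\bigr]_{2g-2}u^{2g-2}$. Only then does the Mellin argument, applied to $\Exp\chi_{\mgp{T}}(\CYf,T_{\CYf}^\vee-T_{\CYf})$, yield the conjecture for general $\CYf$; this is \cref{prop: constant maps general fivefold}.

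Second, the parts of the asymptotic step you do sketch are wrong in ways that matter.
\begin{itemize}
\item \emph{Parity and pole order.} Under $\sum_i\epsilon_i=0$, the function $F(t)=\sum_i\cs(2t\epsilon_i)/\prod_i\cs(t\epsilon_i)$ is even: an odd numerator over a product of five odd factors. Its pole at $t=0$ has order two, not three, because the linear term $2t\sum_i\epsilon_i$ of the numerator vanishes. Thus $F(t)=\frac{p_3}{3e_5}t^{-2}+\frac{1}{12}+\sum_{g>1}a_g t^{2g-2}$, with $p_3=\sum_i\epsilon_i^3$ and $e_5=\prod_i\epsilon_i$. If only odd degrees $m$ survived, as you claim, every coefficient of $u^m$ with $m\ge 3$ would carry $\zeta(1-m)=0$, and no power $u^{2g-2}$ could appear at all. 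The correct mechanism is that the Mellin transform of $F$ has simple poles at $s=2-2g$ with residue $a_g$, and these meet $\zeta(s+1)$ at $\zeta(3-2g)=-B_{2g-2}/(2g-2)$.
\item \emph{Sign.} The sign cannot be left open. With $q_i=\re^{u\epsilon_i}$ the tangent characters, $\chi_{\mgp{T}}(\Aaff{5},T^\vee-T)=-\sum_i(q_i-q_i^{-1})/\prod_i(q_i^{1/2}-q_i^{-1/2})$. So the plethystic logarithm is $-\sum_{m\ge1}F(mu)/m$, and this minus sign is what produces the coefficient $+\frac{B_{2g-2}}{2g-2}a_g$. Your displayed character is $\chi_{\mgp{T}}(\Aaff{5},T-T^\vee)$ and would give the negated formula.
\item \emph{Convergence.} The sum $\sum_m F(mu)/m$ is not divergent. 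It converges for small $u>0$ in a real chamber such as $\max_i|\epsilon_i|<\frac12\sum_i|\epsilon_i|$, where $F$ decays exponentially. The Mellin argument must be run there, and its residues then continued meromorphically to generic weights.
\item \emph{Unstable terms.} Nothing needs checking at $g=1$. The Gromov--Witten series starts at $g=2$, while the $u^{-2}$, $\log u$ and constant terms (from the pole at $s=2$ and the double pole at $s=0$) are exactly what $\sim$ discards.
\item \emph{Minor.} $e(\mathbb{E}^\vee\otimes\epsilon)=\epsilon\,\HodgeLambda[g]{\epsilon}$, not $\epsilon^g\HodgeLambda[g]{\epsilon}$.
\end{itemize}
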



%

We prove \cref{thm: 5 Hodge n point formula,thm: five hodge no psi} in \cref{sec: membrane moduli,sec: const maps main part} respectively. We verify our conjectures on Hodge integrals in two special regimes.

\begin{thm}\label{thm: limits}
    ~
    \begin{enumerate}[label=\textup{(\roman*)}, ref=\textup{\roman*}]
        \item\label{item: antidiagonal}
        Formulas \eqref{eq: 5 Hodge n point formula} and \eqref{eq: 5 lambda 0 psi formula} hold if $\epsilon_i=-\epsilon_j$ for some $i\neq j$.

        \item\label{item: lamg lamg-1}
        After after multiplication of both sides of the equation by $\epsilon_1\cdots\epsilon_5$, formulas \eqref{eq: 5 Hodge n point formula}, for $n\in\{1,2,3\}$, and \eqref{eq: 5 lambda 0 psi formula} hold as equalities in $\bQ[\epsilon_1,\ldots,\epsilon_5] / (\sum_i \epsilon_i,\epsilon_4^2,\epsilon_4\epsilon_5,\epsilon_5^2)\llbracket u \rrbracket$.
    \end{enumerate}    
\end{thm}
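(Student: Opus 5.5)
The plan is to reduce both regimes to Hodge integrals with fewer Hodge factors, which are controlled by known results: the Mariño--Vafa / Liu--Liu--Zhou triple Hodge formula and Calabi--Yau threefold vertex theory for~\eqref{item: antidiagonal}, and the Getzler--Pandharipande/Faber $\lambda_g\lambda_{g-1}$ theory for~\eqref{item: lamg lamg-1}. We then compare with the right-hand sides of \eqref{eq: 5 Hodge n point formula} and \eqref{eq: 5 lambda 0 psi formula} specialised to the same regime. The basic tool in both parts is Mumford's relation $c(\mathbb{E})c(\mathbb{E}^\vee)=1$ on $\Mbar_{g,n}$.

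For~\eqref{item: antidiagonal}, say $\epsilon_a=-\epsilon_b$. Mumford's relation gives
\[
\HodgeLambda[g]{\epsilon}\,\HodgeLambda[g]{-\epsilon}=(-1)^g\epsilon^{2g-2}.
\]
Hence the quintuple integrand collapses to $(-1)^g\epsilon_a^{2g-2}$ times a triple Hodge integrand $\prod_{c\neq a,b}\HodgeLambda[g]{\epsilon_c}$. The remaining weights satisfy $\sum_{c\neq a,b}\epsilon_c=0$, which is exactly the Calabi--Yau threefold condition. The factor $\epsilon_a^{2g-2}$ can be absorbed into a rescaling $u\mapsto \pm\ri u\epsilon_a$.

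\begin{itemize}
\item \emph{Unpointed case.} The triple integral is the degree-zero series of a local Calabi--Yau threefold, which is known in closed form (\cite{MNOP1} and Faber--Pandharipande): it equals a Bernoulli factor times $\bigl(\sum\epsilon_c\epsilon_{c'}\bigr)/\prod\epsilon_c$. The remaining step is a direct check that the right-hand side of \eqref{eq: 5 lambda 0 psi formula} reduces to this expression when $\epsilon_a=-\epsilon_b$. The $\sinh$ ratio simplifies, since $\sinh(u\epsilon_a)+\sinh(u\epsilon_b)=0$ and $\cs(u\epsilon_a)\cs(u\epsilon_b)=-\cs(u\epsilon_a)^2$.
\item \emph{Pointed case.} We need triple Hodge integrals with insertions $1/(\epsilon_j^{-1}-\psi_j)$. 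When $j\in\{a,b\}$ or $j$ is one of the remaining three weights, this is a one-, two- or three-leg evaluation of the equivariant topological vertex. We would compute it with the Mariño--Vafa formula and its two- and three-partition generalisations (Liu--Liu--Zhou, Maulik--Oblomkov--Okounkov--Pandharipande), where all legs carry the partition $(1)$. Equivalently, we could use the GW/PT correspondence for the toric threefold obtained by deleting the pair of antidiagonal directions. The resulting Schur-function expressions with single-box partitions should telescope into the $\cs$ factors and the $\gamma$-functions of \eqref{eq: 5 Hodge n point formula}. These closed triple Hodge formulas are the "new closed formulas" we would record along the way.
\end{itemize}

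For~\eqref{item: lamg lamg-1}, multiply by $\epsilon_4\epsilon_5$ and work modulo $(\epsilon_4^2,\epsilon_4\epsilon_5,\epsilon_5^2)$. Then
\[
\epsilon\,\HodgeLambda[g]{\epsilon}\equiv(-1)^g(\lambda_g-\epsilon\lambda_{g-1}).
\]
The product is therefore $\lambda_g^2-(\epsilon_4+\epsilon_5)\lambda_g\lambda_{g-1}$, and $\lambda_g^2=0$ by Mumford. Thus the integral becomes $-(\epsilon_4+\epsilon_5)$ times
\[
\int\lambda_g\lambda_{g-1}\prod_{c=1}^3\HodgeLambda[g]{\epsilon_c}\big/\textstyle\prod_j(\cdots),
\]
with $\epsilon_1+\epsilon_2+\epsilon_3\equiv-(\epsilon_4+\epsilon_5)$. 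Since $\epsilon_4+\epsilon_5$ already multiplies everything, we may set $\epsilon_1+\epsilon_2+\epsilon_3=0$ inside.

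Now $\lambda_g\lambda_{g-1}$ is supported on the rational-tails locus. There $\lambda_g\lambda_{g-1}$ integrals against $\psi$'s and further $\lambda$'s are computed by the Getzler--Pandharipande degree-zero Virasoro/Faber formula. Alternatively, and as a first attempt, we would recognise this integral as the $\lambda_g\lambda_{g-1}$-part of the triple Hodge integrals already handled in~\eqref{item: antidiagonal}, namely the local $\mathbb{P}^1$ / Calabi--Yau threefold vertex. This yields a closed formula that must then be matched with the first-order expansion of the right-hand side in $\epsilon_4,\epsilon_5$. The restriction to $n\le3$ is natural here, because for $n\le3$ the marked weights avoid $\epsilon_4,\epsilon_5$.

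The main obstacle is expected to be the pointed comparisons. Extracting closed forms from the vertex or Mariño--Vafa formulas at the special points $z_j=\epsilon_j^{-1}$, and matching them identity-by-identity with the $\gamma$-function products and the case-wise $\vertex$, is a nontrivial series manipulation; this is especially delicate for $n=4,5$ with the $\kappa$-terms. We would first treat $n=1$, where the formula reduces to the classical Mariño--Vafa one-partition formula, and then bootstrap to higher $n$ using the localization structure that already underlies \cref{thm: 5 Hodge n point formula}.
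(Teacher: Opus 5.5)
\textbf{Part (i).} Your reduction via Mumford's relation is the right one. For the unmarked case, and for pointed cases where the anti-diagonal pair $\{a,b\}$ contains no marked index, a vertex or Mari\~no--Vafa-type evaluation with single-box legs is a legitimate route. (The paper uses Faber--Pandharipande, Schimpf's one-point formula, and the closed-vertex and local $\mathrm{Bl}(\bP^1\times\bP^1)$ computations of \cref{sec: triple Hodge formulas 1}.)

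The gap is your claim that the cases with $a$ or $b$ marked are also vertex evaluations with legs $(1)$. They are not.
\begin{itemize}
\item If, say, $\epsilon_1=-\epsilon_5$, the surviving Hodge weights are $\epsilon_2,\epsilon_3,\epsilon_4$, and $z_1=\epsilon_1^{-1}$ is a free variable unrelated to any leg.
\item If both $a$ and $b$ are marked, you get an antipodal pair $z,-z$. Both sides then carry the unstable pole $z_1z_2/(z_1+z_2)$, and you must compare $\HodgeHst(z,-z,\dots)$.
\end{itemize}
For $n=4,5$, every anti-diagonal specialization is of this kind. The required identities \eqref{eq: 3H n2 z}--\eqref{eq: 3H n5 zz} are not in the literature; only the one-point generic-$z$ formula of Okounkov--Pandharipande is. The Mari\~no--Vafa/Okounkov--Pandharipande formulas hold only for positive integral $z_j$, so the real work is analytic continuation. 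The paper does this as follows:
\begin{itemize}
\item write the series as a Fock-space vacuum expectation;
\item fix the other $z_j$ at positive integers, so the $q$-Pochhammer sums terminate and become rational in the remaining variable;
\item continue in that variable, then specialize, using $q$-Chu--Vandermonde, isolation of the polar summand at $z_1+z_2=0$, and a periodicity argument in $a$.
\end{itemize}
Nothing in your plan ("single-box Schur functions should telescope") produces these formulas.

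\textbf{Part (ii).} Your reduction to $-(\epsilon_4+\epsilon_5)\lambda_g\lambda_{g-1}$ via $\lambda_g^2=0$ matches the paper. After that, however, you have no way to evaluate $\int\lambda_g\lambda_{g-1}\prod_{c=1}^3\HodgeLambda[g]{\epsilon_c}\prod_j(\epsilon_j-\psi_j)^{-1}$ as a closed generating series. Getzler--Pandharipande handles $\lambda_g\lambda_{g-1}$ only against $\psi$-monomials, not against three further $\Lambda$-factors. These integrals are also not ``parts'' of the triple Hodge integrals of (i).

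The missing idea is to write $\lambda_g\lambda_{g-1}=(-1)^{g-1}(2g-1)!\,\ch[2g-1](\bE)$ and apply Mumford's formula. Together with the Liu--Pandharipande relation, this cancels the $\kappa$, $\psi$ and separating-boundary terms and leaves
\[
\lambda_g\lambda_{g-1}=\frac{B_{2g}}{4g}\,\iota_*\sum_{k=0}^{2g-2}(-1)^{g-1-k}\psi_{n+1}^k\psi_{n+2}^{2g-2-k},
\qquad \iota\colon\Mbar_{g-1,n+2}\to\Mbar_{g,n}.
\]
Since $\iota^*\HodgeLambda[g]{\epsilon}=\epsilon\,\HodgeLambda[g-1]{\epsilon}$, the first-order coefficient becomes $[z^2]$ of $\HodgeHst(z,-z,\epsvec[-1]{[n]};\epsvec{[3]};\ri z^{-1}u)$. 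These are exactly the antipodal-pair triple Hodge series your part (i) does not supply.

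Minor corrections:
\begin{itemize}
\item Mumford gives $\HodgeLambda[g]{\epsilon}\HodgeLambda[g]{-\epsilon}=(-1)^{g-1}\epsilon^{2g-2}$, not $(-1)^g\epsilon^{2g-2}$.
\item The unmarked Calabi--Yau triple series is homogeneous of degree $0$ and in fact independent of the $\epsilon_c$. It is therefore not proportional to $\sum\epsilon_c\epsilon_{c'}/\prod\epsilon_c$.
\end{itemize}
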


We prove this result in \cref{sec:special regimes}. \Cref{thm: limits}.\labelcref{item: antidiagonal} follows from Mumford's relation \cite{Mum83:Towards},
\begin{equation}
    \HodgeLambda[g]{\epsilon} \HodgeLambda[g]{-\epsilon}
    =
    (-\epsilon^2)^{g-1} \,,
\end{equation}
as quintuple Hodge integrals reduce to triple Hodge integrals. Some of the required triple Hodge formulas have already appeared in the literature, and we derive the remaining ones in \cref{sec: triple Hodge formulas 1,sec: triple Hodge formulas 2}.

\Cref{thm: limits}.\labelcref{item: lamg lamg-1} is also proved by reducing quintuple Hodge integrals to triple ones. Here we use the fact that, in this regime, each integrand contains a factor
\begin{equation}
    \lambda_g\lambda_{g-1}
    =
    (-1)^{g-1} (2g-1)! \, \ch[2g-1] (\bE) \,,
\end{equation}
which, by Mumford's formula, can be traded for $\kappa$- and $\psi$-insertions.

\subsection{Local curves \texorpdfstring{\&}{and} stable pairs}
We conclude with an application to degree-one Gromov--Witten invariants of local curves. Let
\begin{equation}
    \begin{tikzcd}
        \CYf \coloneqq &[-3em] \Tot_C \, \cL_2 \oplus \cL_3 \oplus \cL_4 \oplus \cL_5 \ar[d] \\
        & {C}
    \end{tikzcd}%
\end{equation}
be a local Calabi--Yau curve equipped with a torus $\mgp{T}$ acting on the fibers. It turns out that, in degree one, the equivariant Gromov--Witten invariants of $\CYf$ are determined uniquely by the single series
\begin{equation}
    \label{eq: GW series O(-2) x C3}
    \GWargs{
        \Tot_{\bP^1}\, \cO(-2)\oplus \cO \oplus \cO \oplus \cO
    }{\mgp{T}}{[\bP^1]} \,.
\end{equation}
Indeed, exactly as in the three-dimensional case \cite{BP08:LocGW}, the degeneration formula \cite{Li02:degFormula} expresses the Gromov--Witten series of a local curve in terms of certain basic series. In degree one, all these series turn out to be determined by \eqref{eq: GW series O(-2) x C3}. Since \cref{conj:GW:vertex} yields a conjectural formula for this series, we obtain the following conditional result.

\begin{thm}
    \label{thm: intro local curve GW g formula}
    Suppose that $\mgp{T}$ acts on the fibers of the local curve $\CYf$ with weights $\epsilon_2,\ldots,\epsilon_5$ satisfying the Calabi--Yau condition $\sum_i \epsilon_i=0$. If \cref{conj:GW:vertex} holds for $n=1$, then
    \begin{equation}
        \label{eq: local curve GW g formula intro}
        \GWargs{\CYf}{\mgp{T}}{[C]}
        =
        \prod_{i=2}^5 \big(
            2 \sinh \tfrac{\epsilon_i}{2}
        \big)^{g_C - 1 - \deg \cL_i}
        =
        \hat{\mathsf{A}}_{\mgp{T}}\big( \Mbar_{g_C}(\CYf,[C]) \big) \,.
    \end{equation}
\end{thm}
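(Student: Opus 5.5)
The plan is to follow the strategy of Bryan--Pandharipande for local curves in threefolds, adapted to the fivefold setting in degree one. I will set up a $1+1$ dimensional TQFT: for each genus $g$ and tuple of degrees $(k_2,\ldots,k_5)$, with $\sum_i k_i = 2g-2$ forced by the Calabi--Yau condition on total weights, consider the degree-one equivariant series of $\Tot_C \bigoplus_i \cL_i$. This series depends only on $g_C$ and the $\deg\cL_i$, by deformation invariance of equivariant GW theory in families of local curves. I will extend the definition to relative invariants of local curves relative to fibers over points of $C$. In degree one the relative conditions are trivial: the only partition of $1$ is $(1)$, so the relative state space is one-dimensional.

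First I will apply the degeneration formula \cite{Li02:degFormula}. Degenerating $C$ to a nodal curve and splitting the bundles compatibly, a degree-one map to the degeneration has exactly one component mapping with degree one to each side, with matching contact of order one. The gluing factor is the relative term for the one-dimensional state space. This expresses every degree-one series as a product of a small number of basic series:
\begin{itemize}
\item the cap $\Tot_{\bP^1}(\cO\oplus\cO\oplus\cO\oplus\cO)$ relative to one fiber;
\item the tube, which is the identity;
\item the pair of pants;
\item the level-changing caps $\Tot_{\bP^1}(\cO(-1)\oplus\cO\oplus\cO\oplus\cO)$ with one $\cO(-1)$ placed in slot $i$, each relative to a fiber.
\end{itemize}
Because the Frobenius algebra is one-dimensional, each of these is a single scalar in $\widehat{\mathsf R}$. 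The output is that $\GWargs{\CYf}{\mgp{T}}{[C]}$ is a monomial in these scalars with exponents linear in $g_C$ and $\deg\cL_i$.

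Second, I will pin down the scalars.
\begin{itemize}
\item The trivial cap equals $1$, by a dimension/localization argument: the degree-one genus-zero relative map is rigid and the higher-genus contributions vanish, as in \cite{BP08:LocGW}.
\item The genus-one invariant of $E\times\bC^4$ in degree one is $1$. Only the unramified isomorphism contributes, with trivial obstruction theory in the CY directions. This fixes the handle operator to be $1$ after the pair of pants/copair normalization.
\item The remaining unknowns are the level-change factors $\ell_i$ for an $\cO(-1)$ placed in direction $i$. Gluing two level changes in directions $i$ and $j$ gives $\Tot_{\bP^1}(\cO(-1)_i\oplus\cO(-1)_j\oplus\cO\oplus\cO)$, which is not of the form \eqref{eq: GW series O(-2) x C3}. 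However, placing $\cO(-2)$ in direction $i$ and $\cO(0)$ elsewhere gives $\ell_i\ell_i^{\prime}$, where $\ell_i'$ is the inverse level change $\cO(+1)$. Combining this with the identity (level up)$\cdot$(level down)$=$ tube, I expect to need a symmetry argument to separate the individual $\ell_i$.
\end{itemize}

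The main obstacle is this separation. My plan is to exploit that the series is a priori a function of the weights symmetric under permuting $(\epsilon_i,k_i)$. I will show it factors as $\prod_i f(\epsilon_i)^{k_i}$ times a genus term. To do so, I will compute the $\cO(-2)$-in-slot-$i$ series from \cref{conj:GW:vertex} with $n=1$. That series is the $\hat{\mathsf A}$-genus of $\Mbar_0(\Tot_{\bP^1}\cO(-2)\oplus\cO^3,[\bP^1])$, which is a point with the tangent/obstruction $K$-theory class coming from $H^0-H^1$ of the normal bundle. I will evaluate this explicitly as $\prod_{i} \cs(\epsilon_i)^{\pm1}$-type factors, expecting the product formula \eqref{eq: local curve GW g formula intro} with $g_C=0$ and degrees $(-2,0,0,0)$, i.e. $\cs(\epsilon_2)\prod_{i}\cs(\epsilon_i)^{-1}$. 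Since the $\epsilon_i$ satisfy one linear relation, I will use the full family of such inputs over different slots, together with the $g=1$ normalization, to solve for each $\ell_i = \cs(\epsilon_i)^{-1}$ up to a global factor. That global factor is then fixed by the handle relation.

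Finally, I will assemble the product. I will check that the resulting expression equals $\hat{\mathsf A}_{\mgp{T}}$ of $\Mbar_{g_C}(\CYf,[C])$. The second check is direct: that moduli space is $C$ itself, being the locus of sections with zero fibre displacement, with virtual class given by the obstruction bundle $H^1(C,\bigoplus\cL_i)-H^0(C,\bigoplus \cL_i)$ of equivariantly constant rank $\sum_i(g_C-1-\deg\cL_i)$. Its $\hat{\mathsf A}$-genus is $\prod_i\cs(\epsilon_i)^{g_C-1-\deg\cL_i}$, because the nonequivariant Chern classes integrate trivially against the CY-normalized $\hat{\mathsf A}$ class on a curve.
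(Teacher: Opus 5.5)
Your skeleton is the paper's: the degeneration argument gives the universal factorization $F=A^{g_C-1}\prod_i B_i^{\deg\cL_i}$ (your TQFT monomial). The only geometric input is the $n=1$ conjecture for $\Tot_{\bP^1}\cO(-2)\oplus\cO^{\oplus 3}$ with the $\cO(-2)$ in slot $i$, which gives $A^{-1}B_i^{-2}=\cs(\epsilon_i)^2/\prod_j\cs(\epsilon_j)$. This geometry is two level-$(-1)$ caps in slot $i$ glued together. It is not $\ell_i\ell_i'$, which would be the level-zero tube, and no symmetry argument is needed to separate the slots.

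\textbf{Where the argument fails: the normalization step.}
\begin{itemize}
\item The genus-one relation is vacuous. In a one-dimensional TQFT the torus is the trace of the identity, and $A^{g_C-1}\prod_iB_i^{\deg\cL_i}=1$ at $g_C=1$, $\deg\cL_i=0$ for every value of $A$. So it cannot ``fix the handle operator to be $1$''.
\item In fact $A\neq 1$. Indeed $A^{-1}$ is the degree-one series of $\bP^1\times\Aaff{4}$, whose genus-zero term is $(\epsilon_2\epsilon_3\epsilon_4\epsilon_5)^{-1}$.
\item ``Trivial cap $=1$'' does not follow as in Bryan--Pandharipande. Here the genus-$g$ correction to the cap is homogeneous of degree $2g$ in the $\epsilon_i$ relative to the genus-zero term. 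So the threefold dimension argument, where the corrections have degree zero in the weights, does not apply.
\item More fundamentally, every Calabi--Yau geometry only sees the combinations $AB_i^2$, since $F^2=\prod_i(AB_i^2)^{\deg\cL_i}$ when $\sum_i\deg\cL_i=2g_C-2$. This includes the genus-one geometry and all the $\cO(-2)$ inputs. Hence $A$ and your ``global factor'' are not determined separately by any available data, and your claimed derivation of them passes through false intermediate statements.
\end{itemize}

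\textbf{The missing idea: you do not need them.} Using $B_i^2=A^{-1}\cs(\epsilon_i)^{-2}\prod_j\cs(\epsilon_j)$, one gets
\[
F^2=A^{2g_C-2-\sum_i\deg\cL_i}\Big(\prod_j\cs(\epsilon_j)\Big)^{\sum_i\deg\cL_i}\prod_i\cs(\epsilon_i)^{-2\deg\cL_i} .
\]
The Calabi--Yau condition $\sum_i\deg\cL_i=2g_C-2$ removes $A$ and gives the formula up to sign. That sign is the square-root ambiguity you did not address, and it is fixed by the leading term $\prod_i\epsilon_i^{g_C-1-\deg\cL_i}$. This is exactly how the paper concludes.

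\textbf{The $\hat{\mathsf{A}}$ check.} In your verification of the second equality, the $\mgp{T}$-fixed locus of $\Mbar_{g_C}(\CYf,[C])$ is not $C$ but a single point, the zero-section map. There $T^{\vir}=\sum_i\chi(C,\cL_i)\,\epsilon_i$, since the domain and base deformations cancel. The $\hat{\mathsf{A}}$-genus $\prod_i\cs(\epsilon_i)^{-\chi(\cL_i)}$ then follows exactly as in the genus-zero lemma, with no integral over $C$.
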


This formula should be compared with Donaldson--Thomas theory, more precisely with the $K$-theoretic stable pair invariants of the threefold $X=\Tot_C \,\cL_2\oplus\cL_3$. In \cite[Conj.~2.1]{NO14:membranes}, Nekrasov and Okounkov predict more generally that, for all threefolds, the generating series of these invariants is equal to the membrane index, in close analogy with our prediction for stable maps.

To make this precise, suppose that $X$ is a smooth quasi-projective threefold equipped with the action of a torus $\mgp{T}'$, and that $\cL_4$, $\cL_5$ are equivariant line bundles on $X$. We assume that these bundles satisfy the Calabi--Yau condition $\cL_4\otimes\cL_5\cong K_X$. Let $P_n(X,\beta)$ denote the moduli space of stable pairs on $X$ with Euler characteristic $n$ and support $\beta$. Nekrasov and Okounkov then suggest studying the generating series
\begin{equation}
     \PTargs{X}{\CYf}{\mgp{T}'}{\beta}
     \coloneqq
     \sum_{n\in \bZ}
        (-1)^{n+(\cL_4,\beta)}
        q^{n + \frac{(\cL_4+\cL_5,\beta)}{2}} \,
        \chi_{\mgp{T}'} \bigl(
            P_n(X,\beta) , \widetilde{\cO}_{\vir}
        \bigr) \,,
\end{equation}
of the Euler characteristics of a suitable twist of the virtual structure sheaf of these moduli spaces:
\begin{equation}
    \widetilde{\cO}_{\vir}
    =
    \cO_{\vir} \otimes \Big(
        K_{\vir} \otimes \det\big(
            \mathbf{R}\pi_{*} \,\bF \otimes \rho^*(\cL_4-\cL_5)
        \big)
    \Big)^{\frac{1}{2}} \,.
\end{equation}
Here, $\bF$ denotes the universal sheaf on $P_n(X,\beta)\times X$, while $\pi$ and $\rho$ are the projections to the respective factors.

The twist is chosen precisely so that the generating series, which a priori encodes only invariants of a threefold, behaves like the generating series of a curve-counting theory on the fivefold $\CYf = \Tot_X \, \cL_4\oplus\cL_5$. From this perspective, one should regard the formal box-counting variable $q$ as the coordinate of a one-dimensional torus $\Gm_q$ acting fiber-wise with opposite weights $\pm1$ on $\cL_4$ and $\cL_5$. Indeed, the central speculation of \cite{NO14:membranes} is that the $K$-theoretic stable pair series equals the index of the moduli space of M2-branes on $\CYf$. Hence, we should expect it to agree with the Gromov--Witten series of $\CYf$ as well.

To state this correspondence precisely, let us denote by
\begin{equation}
    \GWdiscargs{\CYf}{\mgp{T}}{\beta}
\end{equation}
the generating series of disconnected $\mgp{T}=\mgp{T}'\times\Gm_q$-equivariant Gromov--Witten invariants of $\CYf$ in curve class $\beta$, excluding stable maps with contracted connected components of the domain. In other words,
\begin{equation}
    1+ \sum_{\beta \neq 0} Q^\beta \, \GWdiscargs{\CYf}{\mgp{T}}{\beta}
    =
    \exp \left(
        \sum_{\beta \neq 0} Q^\beta \, \GWargs{\CYf}{\mgp{T}}{\beta}
    \right) \,,
\end{equation}
where both sums run over all non-zero effective curve classes in $\CYf$.

\begin{conj}
    \label{conj: maps pairs}
    The stable pair series lifts to a rational function in $q$ satisfying
    \begin{equation}
        \ch[\mgp{T}] \, \PTargs{X}{\CYf}{\mgp{T}'}{\beta}
        =
        \GWdiscargs{\CYf}{\mgp{T}}{\beta} 
    \end{equation}
    under the change of variables induced by the Chern character $\ch[\mgp{T}] \colon \mr{Rep}(\mgp{T}) \to \widehat{\mathsf{R}}$.
\end{conj}

This conjecture was first stated in \cite[Conj.~7.20]{BS24:refGW} in the special case $\cL_4 = \cL_5 = \cO_X$. If, in addition, $\mgp{T}'$ fixes the Calabi--Yau form of $X$, the conjecture reduces to the original Gromov--Witten/Pairs correspondence for Calabi--Yau threefolds \cite[Conj.~3.3]{PT:StablePairs}, as explained in \cite[\S7.2.3]{BS24:refGW}. The original conjecture has been proven for toric threefolds in \cite{MOOP11:GWDTtoric} and for general Calabi-Yau threefolds by Pardon \cite{Par:UnivCountCurve}.

Let us return to our discussion of local curves and assume that
$X = \Tot_C \,\cL_2\oplus\cL_3$. In \cite[Cor.~5.1.19]{Ok15:lectures}, Okounkov proves the following formula for the degree-one stable pair series:
\begin{equation}
     \PTargs{X}{Z}{\mgp{T}'}{[C]}
     =
     \chi_{\mgp{T}}\Big(
        \Mbar_{g_C}(\CYf,[C]),
        \cO_{\vir}\otimes K_{\vir}^{\frac{1}{2}}
     \Big) \,.
\end{equation}
See also \cite[Cor.~6.9]{Mon:refinedDT:local} for a different proof of a special case of this formula. Now by Hirzebruch--Riemann--Roch, the image of the right-hand side under the Chern character is precisely the \smash{$\hat{\mathsf A}$}-genus of \smash{$\Mbar_{g_C}(\CYf,[C])$}. Since \Cref{thm: intro local curve GW g formula} identifies the latter with the all-genus Gromov--Witten series of $\CYf$, we arrive at the following conclusion.

\begin{cor}
    Suppose that \cref{conj:GW:vertex} holds for $n=1$. Then \Cref{conj: maps pairs} holds for local curves in degree one.
\end{cor}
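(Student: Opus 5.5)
The corollary follows by chaining the three identifications already recorded above. Assume \cref{conj:GW:vertex} for $n=1$, let $X=\Tot_C\,\cL_2\oplus\cL_3$ and $\CYf=\Tot_X\,\cL_4\oplus\cL_5$, and set $\mgp{T}=\mgp{T}'\times\Gm_q$. There are two things to show. First, the degree-one stable pair series lifts to a rational function in $q$. Second, its Chern character equals $\GWdiscargs{\CYf}{\mgp{T}}{[C]}$.

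\textbf{Step 1 (reduce to the connected series).} Since $[C]$ is primitive and the only effective classes are the multiples $d[C]$ with $d\geq 0$, the exponential defining the disconnected series has only one term in degree one. Comparing coefficients of $Q^{[C]}$ gives
\[
\GWdiscargs{\CYf}{\mgp{T}}{[C]}=\GWargs{\CYf}{\mgp{T}}{[C]}.
\]
Constant components are excluded by definition, so no degree-zero factor appears.

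\textbf{Step 2 (Gromov--Witten side).} By \cref{thm: intro local curve GW g formula}, which applies under our hypothesis, $\GWargs{\CYf}{\mgp{T}}{[C]}=\hat{\mathsf{A}}_{\mgp{T}}\big(\Mbar_{g_C}(\CYf,[C])\big)$. The weight of $\Gm_q$ enters only through the weights $\epsilon_4,\epsilon_5$, which are $\pm$ the $q$-weight shifted by $\mgp{T}'$-weights.

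\textbf{Step 3 (pairs side).} Okounkov's formula \cite[Cor.~5.1.19]{Ok15:lectures} identifies
\[
\PTargs{X}{\CYf}{\mgp{T}'}{[C]}=\chi_{\mgp{T}}\big(\Mbar_{g_C}(\CYf,[C]),\cO_{\vir}\otimes K_{\vir}^{1/2}\big).
\]
This is an equivariant $K$-theoretic Euler characteristic computed by localization on a moduli space with proper fixed locus. In degree one, $\Mbar_{g_C}(\CYf,[C])$ is essentially $C$ with an obstruction theory, so this is a Laurent polynomial, or at worst a rational function, in the characters, including $q$. That gives the rationality statement.

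\textbf{Step 4 (conclude).} Apply $\ch[\mgp{T}]$. By virtual Hirzebruch--Riemann--Roch, the twist by $K_{\vir}^{1/2}$ turns the virtual Todd class into the $\hat{\mathsf{A}}$-class, so $\ch[\mgp{T}]\chi_{\mgp{T}}(\cdots)=\hat{\mathsf{A}}_{\mgp{T}}\big(\Mbar_{g_C}(\CYf,[C])\big)$. Combining with Steps 1 and 2 gives the claim.

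\textbf{Main obstacle.} The delicate point is the bookkeeping in Step 4. One must check that the sign $(-1)^{n+(\cL_4,\beta)}$ and the power $q^{(\cL_4+\cL_5,\beta)/2}$ in $\PTargs{X}{\CYf}{\mgp{T}'}{\beta}$ match the normalization of the $\hat{\mathsf{A}}$-genus under $\ch[\mgp{T}]$. Here $q\mapsto e^{\epsilon}$ and the square root $K_{\vir}^{1/2}$ must have the same choice on both sides. I would verify this directly in the explicit formula of \cref{thm: intro local curve GW g formula}: compute the localization of $\chi_{\mgp{T}}$ as $\prod_i(t_i^{1/2}-t_i^{-1/2})^{g_C-1-\deg\cL_i}$ and apply $\ch$ factor by factor.
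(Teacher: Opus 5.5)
Your proposal is correct and follows the same route as the paper. The paper's argument is: Okounkov's formula for the degree-one pair series, then virtual Hirzebruch--Riemann--Roch turning $\cO_{\vir}\otimes K_{\vir}^{1/2}$ into the $\hat{\mathsf{A}}$-genus, then \cref{thm: intro local curve GW g formula} to identify that with the Gromov--Witten series. Your Step 1 (connected equals disconnected in the primitive class $[C]$) makes explicit what the paper leaves implicit.

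Two small corrections. First, $\Mbar_{g_C}(\CYf,[C])$ is the space of sections $H^0(C,\oplus_i\cL_i)$, whose $\mgp{T}$-fixed locus is the single zero section; it is not ``essentially $C$''. This does not affect your rationality argument. Second, the sign and $q$-shift normalizations you flag as the main obstacle are already absorbed into Okounkov's formula as quoted, so nothing beyond the factor-by-factor Chern character check is needed.
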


\subsection{Constant maps \texorpdfstring{\&}{and} Donaldson--Thomas theory of points}
As in the previous section, our \Cref{conj:index} for the generating series of constant maps allows a comparison with the $K$-theoretic Donaldson--Thomas theory of points on threefolds.

Suppose that $X$ is a smooth quasi-projective threefold equipped with the action of a torus $\mgp{T}'$. Choose two equivariant line bundles $\cL_4$, $\cL_5$ satisfying the Calabi--Yau condition $\cL_4 \otimes \cL_5 \cong K_X$. Following again the proposal of Nekrasov and Okounkov \cite[Def.~3.1]{NO14:membranes}, we form the generating series of equivariant Euler characteristics of the twisted virtual structure sheaf of the Hilbert scheme of points:
\begin{equation}
    \DTargs{X}{Z}{\mgp{T}'}{0}
    \coloneqq
    \sum_{n \geq 0}
        (-q)^n \, \chi_{\mgp{T}'} \Big(
            \mr{Hilb}_n(X),
            \widetilde{\cO}_{\vir}
        \Big) \,.
\end{equation}
In \cite[Thm.~3.3.6]{Ok15:lectures}, Okounkov proves the following formula conjectured by Nekrasov \cite{Nek05:ChasingM} for this generating series:
\begin{equation}
    \DTargs{X}{Z}{\mgp{T}'}{0}
    =
    \Exp \left(
        \chi_{\mgp{T}'} \biggl(
            X,
            \frac{
                q \cL_4 \bigl( T_X + K_X - T_X^\vee - K_X^{-1} \bigr)
            }{
                \bigl(1-q\cL_4 \bigr)
                \bigl(1-q\cL_5^{-1}\bigr)
            }
        \biggr)
    \right) \,.
\end{equation}
This formula suggests that we should regard $q$ as the coordinate of a torus $\Gm_q$ acting fiber-wise with opposite weights on $\cL_4$ and $\cL_5$. Indeed, after adding a correction term, the argument of the plethystic exponential on the right-hand side admits a simple expression as an Euler characteristic on the fivefold $\CYf=\Tot_X\cL_4\oplus\cL_5$, equivariant with respect to the torus $\mgp{T} = \mgp{T}' \times \Gm_q$ (cf.~\cite[\S2.4.7]{NO14:membranes}):
\begin{equation}
   \chi_{\mgp{T}'}\big(X, K_X - \cO_X \big)
   +
   \chi_{\mgp{T}'} \left(
        X,
        \frac{
            q\cL_4 \bigl(T_X +K_X - T_X^\vee - K_X^{-1} \bigr)
        }{
            \bigl(1-q\cL_4\bigr) \bigl(1-q\cL_5^{-1}\bigr)
        }
    \right)
    =
    \chi_{\mgp{T}} \bigl( \CYf, T_{\CYf}^\vee - T_{\CYf} \bigr) \,.
\end{equation}
We identify the right-hand side with the argument of the plethystic exponential appearing in the index of eleven-dimensional supergravity. Hence, equivalently, \cref{conj:index} can be formulated as a relation between the $K$-theoretic Donaldson--Thomas series of points and the constant-map series.

\begin{conj}
    If $\CYf = \Tot_{X} \cL_4 \oplus \cL_5$, then
    \begin{equation}
        \Exp \Big( \chi_{\mgp{T}'}\big(X, K_X - \cO_X \big) \Big)
        \, \cdot \,
        \DTargs{X}{Z}{\mgp{T}'}{0}
        \sim
        \exp\Big( 2 \, \GWargs{\CYf}{\mgp{T}}{0} \Big) \,.
    \end{equation}
\end{conj}

By \cref{thm: limits}, the conjecture holds for all threefolds with Chern roots $\alpha_1,\alpha_2,\alpha_3$ satisfying either $\alpha_1=-\alpha_2$ or $\alpha_1^2=\alpha_2^2=\alpha_1\alpha_2=0$. Moreover, part (\labelcref{item: antidiagonal}) of the same \namecref{thm: limits} proves the conjecture when $\cL_4 \cong \cL_5^{-1}$ as $\mgp{T}'$-equivariant line bundles. 

\subsection*{AI disclosure}
The mathematical content of the paper is due to the authors. Generative AI was solely used to improve the language and grammar of the paper.

\subsection*{Acknowledgments}
Discussions with A.~Brini, N.~Fasola, D.~Maulik, S.~Monavari, A.~Okounkov, D.~Ranganathan, M.~Schimpf and S.~Shadrin have played an important role in our work.

A.G.~was supported by an ETH Fellowship (22-2 FEL-003) and a Hermann-Weyl-Instructor\-ship from the Forschungsinstitut für Mathematik at ETH Zürich. 
R.P.~was supported by SNF-200020-219369 and SwissMAP.
Y.S.~was supported by SNF-200020-219369 and a Walter Benjamin Fellowship of the Deutsche Forschungsgemeinschaft (DFG) --- Projektnummer 576663726.

\subsection*{Notation}
We denote the generating series of Hodge integrals over the stable range by
\begin{equation}
    \label{eq: def Hodge H st}
    \HodgeHst(z_1,\ldots, z_n ; \epsilon_1,\ldots, \epsilon_d ; u)
    \coloneqq
    \sum_{\substack{g \geq 0 \\ 2g-2+n>0}} u^{2g-2} \,
    \int_{\Mbar_{g,n}} \frac{\prod_{i=1}^d \HodgeLambda[g]{\epsilon_i}}{\prod_{j=1}^n (z_j^{-1} - \psi_j)} \,.
\end{equation}
To ease notation, we will use the short-hand $\epsvec{I} \coloneqq (\epsilon_i)_{i\in I}$ to denote arguments labeled by some index set $I$. For instance, with the notation $[n]\coloneqq (1,\ldots,n)$ we will write $$\HodgeHst(\zvec{[n]} ; \epsvec{[d]} ; u)$$ for the generating series \eqref{eq: def Hodge H st}. For the generating series including unstable terms we write
\begin{equation}
\label{eq: Hodge unstable}
    \HodgeH(\zvec{[n]} ; \epsvec{[d]} ; u)
    =
    \frac{\delta_{n,1}}{u^2}
    \frac{1}{z_1 \prod_{i=1}^d \epsilon_i}
    +
    \frac{\delta_{n,2}}{u^2}
    \frac{z_1 z_2}{(z_1 + z_2) \prod_{i=1}^d \epsilon_i}
    +
    \HodgeHst(\zvec{[n]} ; \epsvec{[d]} ; u) \,.
\end{equation}

\section{Membrane moduli}
\label{sec: membrane moduli}

The main result of this section is the proof of \cref{thm: 5 Hodge n point formula}, which establishes an equivalence between two statements. On the one hand, \cref{conj:GW:vertex} identifies the all-genus Gromov–Witten series of a partial compactification of $n$ coordinate lines in $\mathbb{C}^5$, in primitive curve class, with the \smash{$\hat{\mathsf{A}}$}-genus of the moduli space of genus-zero stable maps. On the other hand, \eqref{eq: 5 Hodge n point formula} gives an explicit formula for quintuple Hodge integrals. \Cref{thm: 5 Hodge n point formula} asserts that these two statements are equivalent. The proof proceeds by evaluating the \smash{$\hat{\mathsf{A}}$}-genus by localization and comparing the resulting expression with the localization formula for the Gromov--Witten series.

We begin with the case $n=1$, which reduces to a local curve calculation and already contains the main features of the argument. As a by-product, we obtain a formula for degree-one Gromov--Witten invariants of local curves of arbitrary genus, summarized in the introduction as \cref{thm: intro local curve GW g formula}. We then turn to the case $n>1$ and complete the proof by the same strategy.

\subsection{One-pointed Hodge integrals and local curves}
We start by recalling \cref{conj:GW:vertex} for $n=1$. In this case, the \namecref{conj:GW:vertex} concerns the Gromov--Witten theory of a partial compactification $\CYf$ of affine five-space in which the first coordinate line is compactified to a rational curve. Such a compactification is described by the splitting of the normal bundle of that rational curve. Equivalently, $\CYf$ is the total space of four line bundles over the projective line.
\begin{equation}
    \label{eq: loc P1}
    \begin{tikzcd}
        \CYf = &[-3em] \Tot_{\bP^1}\, \cL_2 \oplus \cL_3 \oplus \cL_4 \oplus \cL_5 \ar[d] \\
        & {\bP^1}
    \end{tikzcd}%
\end{equation}
The condition that $\CYf$ be Calabi--Yau translates into the requirement $\sum_i \deg \cL_i =-2$. We extend a torus action of $\mgp{T}\cong\Gm[4]$, with tangent weights $\epsilon_1,\ldots,\epsilon_5$ on the affine patch $\Aaff{5}$ at the point $0\in\bP^1$, to $\CYf$. These weights are subject to the Calabi--Yau condition $\sum_{i=1}^5 \epsilon_i =0$. In this setting, \cref{conj:GW:vertex} predicts the following.

\begin{conj}
    \label{conj: local curve GW vs M2}
    (\Cref{conj:GW:vertex}, $n=1$) For every Calabi--Yau local curve over $\bP^1$ equipped with a Calabi--Yau $\mgp{T}$-action, we have
    \begin{equation}
        \label{eq: local curve GW vs M2}
        \GWargs{\CYf}{\mgp{T}}{[\bP^1]}
        =
        \hat{\mathsf{A}}_{\mgp{T}}\Big( \Mbar_{0}\big(\CYf,[\bP^1]\big) \Big) \,.
    \end{equation}
\end{conj}

\subsubsection{The \texorpdfstring{$\hat{\mathsf{A}}$}{Â}-genus}
The right-hand side of \eqref{eq: local curve GW vs M2} may deserve some explanation. The Hirzebruch \smash{$\hat{\mathsf{A}}$}-genus is the genus associated with the formal power series
\begin{equation}
    \hat{\mathsf{A}}(x)
    \coloneqq
    \frac{x/2}{\sinh(x/2)}
    =
    1 - \frac{1}{24}x^2 + \frac{7}{5760} x^4+ \cdots \,.
\end{equation}
For a scheme $M$ equipped with a $\mgp{T}$-action, this genus defines a multiplicative characteristic class $\hat{\mathsf{A}} \colon K^0_{\mgp{T}}(M) \rightarrow \prod_k \Chow^k_{\mgp{T}}(M)$ by the rule $\hat{\mathsf{A}}(\cL) \coloneqq \hat{\mathsf{A}}(c_1(\cL))$ on line bundles.
If $M$ is equipped with a $\mgp{T}$-equivariant perfect obstruction theory, we define its ($\mgp{T}$-equivariant virtual) \smash{$\hat{\mathsf{A}}$}-genus by
\begin{equation}
    \hat{\mathsf{A}}_{\mgp{T}}(M)
    \coloneqq
    \int_{[M]^{\vir}_{\mgp{T}}} \hat{\mathsf{A}}(T_M^{\vir}) \,.
\end{equation}
Since the \smash{$\hat{\mathsf{A}}$}-genus plays a prominent role in these notes, the following functions will appear frequently:
\begin{equation}
    \cS(x) \coloneqq \frac{\sinh (x/2)}{x/2} \,,
    \qquad
    \cs(x) \coloneqq x\cS(x) = 2 \sinh (x/2) \,.
\end{equation}

\subsubsection{Evaluating the \texorpdfstring{$\hat{\mathsf{A}}$}{Â}-genus}
We now apply this formalism to the moduli space of genus-zero degree-one stable maps to a local curve with rational base and evaluate the right-hand side of the conjectural formula \eqref{eq: local curve GW vs M2}.

\begin{lem}
    \label{lem: Ahat local curve formula}
    We have
    \begin{equation}
        \hat{\mathsf{A}}_{\mgp{T}}\Big( \Mbar_{0}\big(\CYf,[\bP^1]\big) \Big)
        =
        \prod_{i=2}^5 \prod_{k=0}^{\deg \cL_i}
            \cs(\epsilon_i - k\epsilon_1)^{-1} \,,
    \end{equation}
    where we interpret $\prod_{k=0}^{d} a(k) \coloneqq \prod_{k=1}^{-d-1} a(-k)^{-1}$ whenever $d<0$.
\end{lem}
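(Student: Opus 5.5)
The moduli space $\Mbar_0(\CYf,[\bP^1])$ of genus-zero degree-one maps is simple. A stable map in this class has an irreducible domain mapping isomorphically onto a section-like curve. Since the projection $\CYf\to\bP^1$ has degree one on the image, the map is the graph of a section $s=(s_2,\ldots,s_5)$ of $\cL_2\oplus\cdots\oplus\cL_5$. So I plan to identify
\begin{equation*}
    \Mbar_0\big(\CYf,[\bP^1]\big) \cong H^0\big(\bP^1,\textstyle\bigoplus_i \cL_i\big)
\end{equation*}
as a scheme. The identification should be compatible with the natural perfect obstruction theory: the deformation/obstruction complex at a map $f$ is $R\Gamma(\bP^1, f^*T_\CYf) - R\Gamma(\bP^1,T_{\bP^1})$ (the automorphisms of the domain cancel the base directions). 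Since $f^*T_\CYf$ fits into $0\to \bigoplus_i\cL_i\to f^*T_\CYf\to T_{\bP^1}\to 0$ equivariantly (and in fact splits because everything is pulled back along a section), we get
\begin{equation*}
    T^{\vir} = \sum_{i=2}^5 \Big( H^0(\bP^1,\cL_i) - H^1(\bP^1,\cL_i) \Big)
\end{equation*}
in $K$-theory, as a class on the affine space $M=H^0(\bigoplus\cL_i)$. The obstruction bundle $\bigoplus H^1(\cL_i)$ is a trivial bundle with a $\mgp{T}$-action on $M$, and the virtual class is $e_{\mgp{T}}(\mathrm{Ob})\cap[M]$.

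Next I compute by localization. The unique $\mgp{T}$-fixed point is the zero section, assuming the weights are generic. The virtual localization formula gives
\begin{equation*}
    \hat{\mathsf{A}}_{\mgp{T}}(M) = \frac{\hat{\mathsf{A}}(T^{\vir}|_0)}{e(T^{\vir}|_0)} .
\end{equation*}
Since $\hat{\mathsf{A}}(x)/x = 1/\cs(x)$ for a weight $x$, each $H^0$ weight $w$ contributes $\cs(w)^{-1}$, and each $H^1$ weight contributes $\cs(w)$. It remains to list the weights. On the chart at $0$ the fiber coordinate of $\cL_i$ has weight $\epsilon_i$, and the base coordinate has weight $\epsilon_1$. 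If $d_i=\deg\cL_i\ge0$, then $H^0(\cL_i)$ has basis $x^k$ times the local frame, $k=0,\ldots,d_i$, with weights $\epsilon_i-k\epsilon_1$; the sign convention is fixed so that at $0$ the constant section has weight $\epsilon_i$. This gives $\prod_{k=0}^{d_i}\cs(\epsilon_i-k\epsilon_1)^{-1}$. If $d_i\le -2$, then $H^1(\cL_i)$ via Čech cohomology has basis $x^{-k}$ for $k=1,\ldots,-d_i-1$, with weights $\epsilon_i+k\epsilon_1$. This gives $\prod_{k=1}^{-d_i-1}\cs(\epsilon_i+k\epsilon_1)$, matching the stated convention. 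For $d_i=-1$ both groups vanish and the empty product is $1$.

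The main obstacle is keeping the sign conventions for weights (dual versus non-dual characters) consistent with the chosen weights $\epsilon_i$ at $0$. I will check this against the tangent space of $\CYf$ at the fixed point over $\infty$. There the weights should be $-\epsilon_1$ and $\epsilon_i - d_i\epsilon_1$, which is what the $k=d_i$ term reproduces. Note that the Calabi–Yau condition is not actually needed for the computation, and $\cs$ is odd, so an overall sign ambiguity would only affect the result by $\pm1$. I would also justify properly the scheme-theoretic identification and the obstruction-theory comparison, citing the standard description of degree-one maps to the total space of a bundle over $\bP^1$.
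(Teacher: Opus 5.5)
Your proposal is correct and is essentially the paper's proof. You localize at the zero section, which is the unique fixed point, use $T^{\vir}=\hhh^0(\bP^1,\oplus_i\cL_i)-\hhh^1(\bP^1,\oplus_i\cL_i)$ there, and let each weight $w$ contribute $\cs(w)^{-1}$ or $\cs(w)$ according as it occurs in $\hhh^0$ or $\hhh^1$. Your global identification of the moduli space with $\hhh^0(\bP^1,\oplus_i\cL_i)$ and your \v{C}ech bookkeeping of the weights just spell out what the paper summarizes in one line; note that no genericity assumption is needed, since every weight $\epsilon_i-k\epsilon_1$ is a nonzero character of the four-dimensional Calabi--Yau torus.
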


\begin{proof}
    There is a unique $\mgp{T}$-fixed genus-zero stable map to $\CYf$ in degree one, namely the embedding of the zero section $0 \colon \bP^1 \hookrightarrow \CYf$. Since the fixed locus consists of a single isolated point, its virtual tangent bundle is trivial and the virtual normal bundle coincides with the full virtual tangent space at that point. The latter is given by
    \begin{equation}
        T^{\vir}_{\Mbar_{0}(\CYf,[\bP^1])} \,\Big|_{0}
        =
        \hhh^0(\bP^1,\oplus_i \cL_i) - \hhh^1(\bP^1,\oplus_i \cL_i)
    \end{equation}
    in equivariant $K$-theory. Write this virtual representation as $\sum_a x_a - \sum_b y_b$, where the $x_a$ and $y_b$ are one-dimensional $\mgp{T}$-representations. By virtual localization, the contribution of a weight $x$ is $\hat{\mathsf{A}}(c_1(x))/c_1(x)$. Here the denominator comes from the Euler class of the virtual normal bundle, which for a one-dimensional representation of weight $x$ is just $c_1(x)$. Since $\hat{\mathsf{A}}(x) = x\cs(x)^{-1}$, it follows that
    \begin{equation}
        \hat{\mathsf{A}}_{\mgp{T}}\Big( \Mbar_{0}\big(\CYf,[\bP^1]\big) \Big)
        =
        \frac{\prod_b \cs\big(c_1(y_b)\big)}{\prod_a \cs\big(c_1(x_a)\big)} \,.
    \end{equation}
    Finally, expressing the $\mgp{T}$-weights of the above virtual tangent representation in terms of the generators $\epsilon_1,\ldots,\epsilon_5$ yields the claimed formula.
\end{proof}

\Cref{lem: Ahat local curve formula} gives an explicit conjectural formula for the all-genus Gromov--Witten series.

\begin{cor}
    \label{cor: local curve GW g0 formula}
    \cref{conj:GW:vertex} for $n=1$ holds if and only if
    \begin{equation}
    \label{eq: local curve GW g0 formula}
        \GWargs{\CYf}{\mgp{T}}{[\bP^1]}
        =
        \prod_{i=2}^5 \prod_{k=0}^{\deg \cL_i} \cs(\epsilon_i - k\epsilon_1)^{-1} \,.
    \end{equation}
\end{cor}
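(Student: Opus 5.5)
This corollary follows by combining \cref{conj: local curve GW vs M2} with \cref{lem: Ahat local curve formula}. The only subtlety is that \cref{conj:GW:vertex} for $n=1$ is phrased for the specific partial compactification $\CYf_1$ of $\Aaff{5}$. \cref{conj: local curve GW vs M2}, in contrast, is phrased for every Calabi--Yau local curve over $\bP^1$. So the first step is to check that these are the same family of statements.

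Properties (i)--(iii) force $\CYf_1$ to contain a single $\mgp{T}$-invariant complete curve $C_1\cong\bP^1$. The curve $C_1$ is the closure of the first coordinate axis. Every $\mgp{T}$-fixed stable map in class $[C_1]$ lands in $C_1$, so both sides of \eqref{eq: GW compact vertex formula intro} depend only on a $\mgp{T}$-equivariant formal neighbourhood of $C_1$. The Gromov--Witten side is computed by virtual localization, and the $\hat{\mathsf A}$-side likewise by localization. Each localization contribution involves only the equivariant weights of $H^\bullet(\bP^1,N_{C_1})$ at the fixed points.

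The normal bundle $N_{C_1/\CYf_1}$ splits $\mgp{T}$-equivariantly as $\oplus_{i=2}^5\cL_i$ with $\sum\deg\cL_i=-2$. I would argue this splitting by toric geometry, since the compactification is toric. Hence the local invariants of $\CYf_1$ coincide with those of $\Tot_{\bP^1}\oplus_i\cL_i$. Conversely, for any such local curve one realizes it as a toric partial compactification $\CYf_1$ of $\Aaff{5}$. The fibre chart over $0\in\bP^1$ is exactly $\Aaff{5}$ with weights $\epsilon_1,\ldots,\epsilon_5$. So \cref{conj:GW:vertex} for $n=1$ is equivalent to \cref{conj: local curve GW vs M2} for all degree choices $(\deg\cL_i)$.

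Granting this, the right-hand side of \eqref{eq: local curve GW vs M2} is evaluated by \cref{lem: Ahat local curve formula}. Substituting the lemma's value gives \eqref{eq: local curve GW g0 formula}, and the equivalence is immediate, since both statements are equalities with the same left-hand side.

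The main obstacle is the identification step, namely that the local invariants depend only on the formal neighbourhood and that the normal bundle splits equivariantly with the stated weights. I expect both to be routine consequences of toric localization. I would also double-check the weight bookkeeping in the lemma, in particular the convention $\prod_{k=0}^{d}$ for $d<0$ coming from $\hhh^1(\bP^1,\cL_i)$ with weights $\epsilon_i-k\epsilon_1$. These weights must be consistent with the tangent weight $-\epsilon_1$ at $\infty$.
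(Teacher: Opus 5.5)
Your proposal is correct and follows the paper's route: the corollary is just \cref{conj: local curve GW vs M2} (the $n=1$ case of \cref{conj:GW:vertex}) with its right-hand side replaced by the value computed in \cref{lem: Ahat local curve formula}. Your extra localization argument identifying $\CYf_1$ with $\Tot_{\bP^1}\oplus_i\cL_i$ is sound but not needed, since the paper simply takes $\CYf$ to be this total space in the $n=1$ setup preceding the conjecture.
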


\begin{example}
    \label{ex: RC and A1 local curves}
    Two special cases of this formula were already conjectured in \cite{BS24:refGW}:
    \begin{itemize}
        \item If $\cL_2=\cL_3=\cO_{\bP^1}(-1)$ and $\cL_4$ and $\cL_5$ are trivial, then the Calabi--Yau fivefold $\CYf$ is the product of the resolved conifold with the affine plane. In this case, formula \eqref{eq: local curve GW g0 formula} becomes
        \begin{equation}
            \GWargs{\CYf}{\mgp{T}}{[\bP^1]}
            =
            \frac{1}{\cs(\epsilon_4)\,\cs(\epsilon_5)} \,.
        \end{equation}
        
        \item If $\cL_2=\cO_{\bP^1}(-2)$ and $\cL_3$, $\cL_4$, and $\cL_5$ are trivial, then formula \eqref{eq: local curve GW g0 formula} specializes to
        \begin{equation}
            \GWargs{\CYf}{\mgp{T}}{[\bP^1]}
            =
            \frac{\cs(\epsilon_1+\epsilon_2)}{\cs(\epsilon_3)\,\cs(\epsilon_4)\,\cs(\epsilon_5)} \,.
        \end{equation}
    \end{itemize}
\end{example}

\subsubsection{One-pointed Hodge integrals and the function \texorpdfstring{$\gamma$}{ɣ}}
Having discussed \cref{conj:GW:vertex}, we now turn our attention to the Hodge integrals of \cref{thm: 5 Hodge n point formula}. In the case $n=1$, we ought to show the conjecture is equivalent to following formula for one-pointed Hodge integrals:
\begin{equation}
    \label{eq: 5 Hodge 1 point formula}
    \HodgeH(\epsilon_1^{-1}; \epsvec{[5]};u)
    \overset{?}{=}
    \frac{1}{u^2} \prod_{i=2}^5
        \frac{\gamfct{-\epsilon_1^{-1}\epsilon_i}{u\epsilon_1}}{\epsilon_i} \,.
\end{equation}
The function $\gamma$ appearing on the right-hand side is characterized as follows.

\begin{defprop}
    \label{defprop: gamma fct}
    The power series
    \begin{equation}
        \gamfct{x}{u}
        =
        \exp\bigg(
            \sum_{k>0}
                \frac{B_{2k}}{2k \, (2k+1)!} \,
                B_{2k+1}(x) \,
                u^{2k}
        \bigg) \,.
    \end{equation}
    is the unique element of $\bQ[x]\llbracket u \rrbracket$ satisfying
    \begin{equation}
        \label{eq: gamma fct shift}
        \gamfct{x+1}{u}
        =
        \cS(u x) \, \gamfct{x}{u} \,,
        \qquad
        \gamfct{0}{u}
        =
        1 \,.
    \end{equation}
    Here, $B_k(x)$ denotes the Bernoulli polynomial $\frac{t \re^{x t}}{\re^{t}-1} \eqqcolon \sum_{k\geq 0} \frac{B_k(x)}{k!} t^k $, and $B_k \coloneqq B_k(0)$ denotes the Bernoulli number.
\end{defprop}

\begin{proof}
    The identity $\gamfct{0}{u}=1$ follows immediately from the vanishing $B_{2k+1}(0)=0$. The shift equation is a consequence of the relations
    \begin{equation}
        \cS(x)
        =
        \exp \bigg(\sum_{k>0} \frac{B_{2k}}{2k\,(2k)!}x^{2k}\bigg) \,,
        \qquad
        B_m(x+1) = B_m(x)+ m x^{m-1} \,.
    \end{equation}
    For uniqueness, note that the initial condition at $x=0$ together with the shift equation recursively determines the value of the power series at $x=d$ for every non-negative integer $d$. Hence any two elements of $\bQ[x]\llbracket u \rrbracket$ satisfying \eqref{eq: gamma fct shift} agree at infinitely many values of $x$. Since their coefficients are polynomial functions in $x$, it follows that they agree identically.
\end{proof}

We will need the following property, analogous to the reflection formula satisfied by the Gamma function, which follow immediately from the fact that the power series is uniquely determined by the shift \cref{eq: gamma fct shift}.

\begin{lem}
    \label{lem: gamma fct reflect}
    For all positive integers $d$ we have
    \begin{equation}
        \gamfct{-x}{u}
        =
        \frac{1}{\gamfct{x+1}{u}} \,,\qquad
        \gamfct{x}{u}\,\gamfct{-x-d}{u}
        =
        \prod_{k=0}^{d} \cS\big(u(x+k)\big)^{-1} \,.
    \end{equation}
\end{lem}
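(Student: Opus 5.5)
The plan is to derive both identities from the uniqueness statement in \cref{defprop: gamma fct}: exhibit a candidate series, check that it satisfies the defining shift equation \eqref{eq: gamma fct shift} and the normalization at $x=0$, and conclude that it coincides with the expected one. The only other inputs are that $\cS$ is even and that $\gamfct{0}{u}=1$.

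For the reflection formula $\gamfct{-x}{u}=\gamfct{x+1}{u}^{-1}$, I consider
\begin{equation}
    \delta(x) \coloneqq \frac{1}{\gamfct{-x+1}{u}}\,,
\end{equation}
which is an element of $\bQ[x]\llbracket u \rrbracket$ because $\gamma$ has constant term $1$ in $u$. The key steps are as follows.
\begin{enumerate}
    \item Compute $\delta(x+1)/\delta(x)=\gamfct{-x+1}{u}/\gamfct{-x}{u}$.
    \item Apply the shift equation at the point $-x$ to get $\gamfct{-x+1}{u}=\cS(-ux)\,\gamfct{-x}{u}$, so the ratio equals $\cS(-ux)=\cS(ux)$ by evenness.
    \item Check the normalization: $\delta(0)=1/\gamfct{1}{u}=1/(\cS(0)\gamfct{0}{u})=1$.
\end{enumerate}
By uniqueness, $\delta(x)=\gamfct{x}{u}$, that is, $\gamfct{x}{u}\,\gamfct{1-x}{u}=1$. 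Replacing $x$ by $-x$ gives the claim. As a cross-check, this also follows directly from the explicit formula, since $B_{2k+1}(1-x)=-B_{2k+1}(x)$.

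For the second identity, I first rewrite it using the reflection formula with $x$ replaced by $x+d$:
\begin{equation}
    \gamfct{-x-d}{u}=\frac{1}{\gamfct{x+d+1}{u}}\,.
\end{equation}
It then remains to iterate the shift equation $d+1$ times:
\begin{equation}
    \gamfct{x+d+1}{u}=\prod_{k=0}^{d}\cS\big(u(x+k)\big)\,\gamfct{x}{u}\,.
\end{equation}
Dividing by $\gamfct{x+d+1}{u}$ yields
\begin{equation}
    \gamfct{x}{u}\,\gamfct{-x-d}{u}=\prod_{k=0}^{d}\cS\big(u(x+k)\big)^{-1}\,.
\end{equation}
This argument works for every integer $d\geq 0$, in particular for all positive $d$.

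I expect no real obstacle. The only points that need care are:
\begin{itemize}
    \item checking that the candidate $\delta$ lies in $\bQ[x]\llbracket u \rrbracket$, which holds because inverting a series with constant term $1$ keeps polynomial coefficients in $x$;
    \item keeping the index bookkeeping in the iterated shift consistent, so that the product runs exactly over $k=0,\ldots,d$.
\end{itemize}
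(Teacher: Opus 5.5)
Your proof is correct and follows the route the paper indicates. The paper only asserts that both identities follow from uniqueness for the shift equation \eqref{eq: gamma fct shift}; you supply the details, checking that $1/\gamfct{1-x}{u}$ satisfies the shift equation and the normalization (using that $\cS$ is even), and then iterating the shift $d+1$ times. Your cross-check via $B_{2k+1}(1-x)=-B_{2k+1}(x)$ is also a valid direct proof of the first identity from the explicit formula.
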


\begin{rmk}
    The formal series $\gamfct{x}{u}$ is related to the asymptotics of the $q$-Gamma function
    \begin{equation}
        \Gamma_q(x)
        =
        \frac{\prod_{k= 0}^{\infty}(1-q^{-1-k})}{\prod_{k \ge 0}^{\infty}(1-q^{-x-k})} \,,
        \qquad
        |q|>1 \,.
    \end{equation}
    Indeed, as $q=\re^{u} \rightarrow 1^+$ we have
    \begin{equation}
        \log \Gamma_q(x)
        \sim
        (x-1) \log u + \log \Gamma(x)  -\frac{x(x-1)}{4} u + \log \gamfct{x}{u} \,.
    \end{equation}
\end{rmk}

\subsubsection{Proof of \texorpdfstring{\cref{thm: 5 Hodge n point formula}}{the equivalence} for \texorpdfstring{$n=1$}{n=1}}
\label{sec: membrane moduli n 1}
For ease of notation, set $d_i = \deg \cL_i$. We also introduce the genus-counting variable in the Gromov--Witten generating series, weighting genus-$g$ contributions by $u^{2g-2}$. By \cref{cor: local curve GW g0 formula}, it suffices to prove the equivalence between the formula
\begin{equation}
\label{eq: local curve GW g0 formula 2}
    \GWargs{\CYf}{\mgp{T}}{[\bP^1]}
    =
    \prod_{i=2}^5 \prod_{k=0}^{d_i} \cs(u\epsilon_i - k u\epsilon_1)^{-1} \,.
\end{equation}
for the Gromov--Witten series and the formula for one-pointed Hodge integrals given in \eqref{eq: 5 Hodge 1 point formula}. To this end, we evaluate the Gromov--Witten series by virtual localization \cite{GP97:virtloc}:
\begin{equation}
    \label{eq: local curve GW localized}
    \GWargs{\CYf}{\mgp{T}}{[\bP^1]}
    =
    \HodgeH\big( \epsilon_1^{-1}; \epsvec[0]{[5]};u \big)
    \,
    \frac{u^2}{\prod_{i=2}^5 \prod_{k=1}^{d_i-1} (\epsilon_i-k\epsilon_1)}
    \,
    \HodgeH\big( -\epsilon_1^{-1}; \epsvec[\infty]{[5]};u \big) \,.
\end{equation}
Here,
\begin{equation}
    \epsvec[0]{[5]} = (\epsilon_1,\epsilon_2,\epsilon_3,\epsilon_4,\epsilon_5) \,,
    \quad
    \epsvec[\infty]{[5]} = (-\epsilon_1,\epsilon_2-d_2\epsilon_1,\epsilon_3-d_3\epsilon_1,\epsilon_4-d_4\epsilon_1,\epsilon_5-d_5\epsilon_1) \,,
\end{equation}
denote the collections of tangent $\mgp{T}$-weights at the fixed points $0$ and $\infty$ in $\bP^1\subset \CYf$. The two Hodge series arise from contracted components in the domains of $\mgp{T}$-fixed stable maps, while the middle factor records the deformations and obstructions of the rational bridge connecting them.

One implication is now immediate. Substituting \eqref{eq: 5 Hodge 1 point formula} for the quintuple Hodge generating series into the right-hand side of \eqref{eq: local curve GW localized}, and using the reflection formula from \cref{lem: gamma fct reflect}, we recover \eqref{eq: local curve GW g0 formula 2}. For the converse, suppose that the Gromov--Witten series satisfies \eqref{eq: local curve GW g0 formula 2}. Then, in particular, the formulas of \cref{ex: RC and A1 local curves} hold. Combined with \eqref{eq: local curve GW localized}, these imply that the generating series $\HodgeH(\epsilon_1^{-1}; \epsvec{[5]} ;u)$ satisfies the shift equation
\begin{equation}
    \HodgeH(\epsilon_1^{-1}; \epsilon_1, \epsilon_2 + \epsilon_1, \epsilon_3 - \epsilon_1, \epsilon_4,\epsilon_5 ;u)
    =
    \frac{\epsilon_2}{\cs(u(\epsilon_1+\epsilon_2))} \,
    \frac{\cs(u\epsilon_3)}{\epsilon_3 - \epsilon_1} \,
    \HodgeH(\epsilon_1^{-1}; \epsilon_1, \epsilon_2, \epsilon_3, \epsilon_4,\epsilon_5 ;u) \,.
\end{equation}
By the defining property \eqref{eq: gamma fct shift} of the function $\gamma$, the product
\begin{equation}
    \frac{1}{u^2} \prod_{i=2}^5 \frac{\gamfct{-\epsilon_1^{-1}\epsilon_i}{u\epsilon_1}}{\epsilon_i}
\end{equation}
satisfies the same shift equation. Later, in \cref{sec:special regimes}, we will show that
\begin{equation}
    \HodgeH(\epsilon_1^{-1}; \epsvec{[5]};u)
    \qquad \text{and} \qquad
    \frac{1}{u^2} \prod_{i=2}^5 \frac{\gamfct{-\epsilon_1^{-1}\epsilon_i}{u\epsilon_1}}{\epsilon_i} \,,
\end{equation}
agree after specializing to $\epsilon_4 = -\epsilon_5$. By uniqueness, it follows that they agree identically.\qed

\subsubsection{Local curves of arbitrary genus}
Before turning to the proof of \cref{thm: 5 Hodge n point formula} for $n>1$, we generalize the formula obtained above to local curves with an arbitrary base. More precisely, we show that the equality between the all-genus Gromov--Witten series and the \smash{$\hat{\mathsf{A}}$}-genus, established above for rational local curves, extends to local Calabi--Yau curves over a smooth base of any genus.

Let $C$ be a smooth curve of genus $g_C \geq 0$. We consider the total space of four line bundles:
\begin{equation}
        \CYf = 
        \Tot_{C}\, \cL_2 \oplus \cL_3 \oplus \cL_4 \oplus \cL_5 \,.
\end{equation}
Let $\mgp{T}$ be a torus acting with weights $\epsilon_2,\ldots,\epsilon_5$ on the fibers of $\CYf$ while fixing the base. We prove the following formula for the Gromov--Witten series of this local fivefold.

\begin{customthm}{\ref{thm: intro local curve GW g formula}} 
    \label{thm: local curve GW g formula}
    Suppose \cref{conj:GW:vertex} holds for $n=1$. Then, for every Calabi--Yau local curve equipped with a fiber-wise Calabi--Yau torus action, we have
    \begin{equation}
        \label{eq: local curve GW g formula}
        \GWargs{\CYf}{\mgp{T}}{[C]}
        =
        \prod_{i=2}^5 \cs(\epsilon_i)^{g_C-1-\deg \cL_i}
        =
        \hat{\mathsf{A}}_{\mgp{T}}\Big( \Mbar_{g_C}\big(\CYf,[C]\big) \Big) \,.
    \end{equation}
\end{customthm}

The second equality is a simple generalization of \cref{lem: Ahat local curve formula}. We prove the first by a degeneration argument reducing the claim to the case of a rational base, where the required formula has already been established in \cref{cor: local curve GW g0 formula}. For this we need the following universal-factorization statement.

\begin{lem}
    \label{lem: local curve GW universal series}
    There exist universal series $A,B_2,\ldots,B_5$ in $\epsilon_2,\ldots,\epsilon_5$ such that
    \begin{equation}
        \GWargs{\CYf}{\mgp{T}}{[C]} = A^{g_C -1}\prod_{i=2}^5 B_i^{\deg \cL_i} \,.
    \end{equation}
    At this stage, we do not impose the Calabi--Yau condition.
\end{lem}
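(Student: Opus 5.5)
We will follow the strategy of Bryan--Pandharipande for local curves in dimension three \cite{BP08:LocGW}, adapted to degree one and to four line bundles. The underlying principle is that, in degree one, the relevant invariants behave like a one-dimensional TQFT, with a one-dimensional state space.

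\textbf{Step 1: relative invariants.} We first introduce relative invariants. Let $C$ be a curve with $r$ marked relative points $p_1,\ldots,p_r$, and consider the relative Gromov--Witten series of $\CYf$ relative to the fibers $\CYf|_{p_j}\cong\Aaff{4}$, in class $[C]$. In degree one there is a unique relative condition at each $p_j$: the map meets the fiber once, with the trivial partition $(1)$. We therefore obtain series
\begin{equation}
    \GW(g_C \mid d_2,\ldots,d_5)_r \in \bQ(\epsilon_2,\ldots,\epsilon_5)\llbracket u\rrbracket \,,
\end{equation}
which depend only on the discrete data $g_C$, $d_i=\deg\cL_i$ and $r$. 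This is because the $\mgp{T}$-equivariant deformation class of $(C,\cL_i,p_j)$ is determined by these data: the torus acts fiberwise, so the invariants are deformation invariant as $(C,\cL_i)$ vary in moduli. Properness of the fixed loci holds since $\mgp{T}$ fixes only the zero section.

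\textbf{Step 2: degeneration.} We then apply the degeneration formula \cite{Li02:degFormula}. Degenerate $C$ either into two components glued at a node, splitting the degrees $d_i=d_i'+d_i''$, or by self-gluing, which raises the genus by one. The degeneration formula then expresses the invariants through gluing along a single relative condition. The gluing factor is the inverse of the pairing of the unique cohomology-weighted partition, which equals the equivariant Euler class of the fiber, $\prod_{i=2}^5\epsilon_i$ (up to a sign and a power of $u$ fixed by convention). Writing $\eta$ for this factor, we obtain
\begin{align}
    \GW(g'+g''\mid d'+d'')_{r'+r''} &= \GW(g'\mid d')_{r'+1}\,\eta\,\GW(g''\mid d'')_{r''+1} \,, \\
    \GW(g+1\mid d)_r &= \GW(g\mid d)_{r+2}\,\eta \,.
\end{align}
Because the state space is one-dimensional, these relations say that $\eta\,\GW(g\mid d)_r$ is multiplicative under all gluings. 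Standard TQFT bookkeeping then shows that it depends on $(g,d,r)$ through the product
\begin{equation}
    \eta\,\GW(g\mid d)_r = \Big(\eta\,\GW(0\mid 0)_3\Big)^{g-1+r}\prod_i\Big(\GW(0\mid\delta_i)_1/\GW(0\mid 0)_1\Big)^{d_i} \,,
\end{equation}
suitably normalized, where $\delta_i$ denotes the degree vector with $1$ in position $i$ and $0$ elsewhere. To justify this, we decompose an arbitrary $(g,d,r)$ into caps $(0\mid\pm\delta_i)_1$, tubes and pairs of pants $(0\mid 0)_3$. Setting $r=0$ yields the universal series $A$ and $B_i$. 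Negative degrees are handled by noting that caps of degree $\delta_i$ and $-\delta_i$ glue to the tube, which makes the degree-$(-\delta_i)$ cap invertible relative to the degree-$\delta_i$ cap.

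\textbf{Main obstacle.} We expect the main obstacle to be technical rather than combinatorial. The degeneration formula must be set up equivariantly for non-compact targets with proper fixed loci, and the degree-zero pair-of-pants and tube series must be well defined. These invariants are computed by localization to the zero section; the tube $(0\mid 0)_2$ must act as the identity after normalization by $\eta$, and $(0\mid 0)_3$ must be invertible. We would verify both by direct localization in low genus: the leading genus-zero term is a nonzero rational function in the $\epsilon_i$, which gives invertibility in the power series ring. Finally, we note that the Calabi--Yau condition is never used, since the gluing relations are purely formal. This is consistent with the statement of the lemma.
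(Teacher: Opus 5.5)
Your approach is essentially the paper's. Both degenerate the base curve via Li's formula and use that in degree one the relative state space over a fiber $\mathbb{C}^4$ is one-dimensional, so the series factors through elementary relative pieces. The paper uses a single closed necklace of genus-one level-$(0,0,0,0)$ tubes and genus-zero level-$\delta_i$ tubes. It takes its normalization from $\GWargs{E\times\mathbb{C}^4}{\mgp{T}}{[E]}=1$ together with the two-cap degeneration of the elliptic curve: this identifies $A^{-1}$ with two glued level-zero caps (for $g_C=0$), and the level-zero tube handles negative degrees. You instead classify all relative series via pants, caps and tubes and get invertibility from leading terms. That also works, and it yields the relative invariants $\GW(g\mid d)_r$ as a by-product.

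\textbf{The bookkeeping in your Step 2 is wrong as written.} Multiplying your two gluing relations by $\eta$ shows that $\eta\,\GW(g\mid d)_r$ is multiplicative under gluing two pieces, but it picks up an extra factor $\eta$ under self-gluing. So it is not multiplicative under all gluings, as you claim. Your displayed formula is not even consistent at the pair of pants: for $(g,r,d)=(0,3,0)$ it reads $\eta\,\GW(0\mid 0)_3=(\eta\,\GW(0\mid 0)_3)^2$. At $r=0$ it also leaves a stray factor $\eta^{-1}$, which is not of the form $A^{g_C-1}\prod_i B_i^{d_i}$.

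The correct normalization is $x(g\mid d)_r\coloneqq\eta^{r/2}\,\GW(g\mid d)_r$. It is multiplicative under two-piece gluing and invariant under self-gluing, hence of the form $x(g\mid d)_r=\lambda^{2g-2+r}\prod_i\mu_i^{d_i}$, with $\lambda=x(0\mid 0)_3$ and $\mu_i=\GW(0\mid\delta_i)_1/\GW(0\mid 0)_1$. Setting $r=0$ gives
\[
A=\lambda^2=\eta^3\,\GW(0\mid 0)_3^2=\eta\,\GW(1\mid 0)_2,\qquad B_i=\mu_i=\eta\,\GW(0\mid\delta_i)_2 ,
\]
and no square roots of $\eta$ survive. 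This is exactly the paper's identification of $A$ and $B_i$ with $\eta$-glued genus-one level-zero and genus-zero level-$\delta_i$ tubes.

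\textbf{Two smaller slips.}
\begin{enumerate}
\item That the level-zero tube acts as the identity cannot be verified ``in low genus''. It follows formally: gluing two level-zero tubes gives a level-zero tube, and the leading term is nonvanishing. Alternatively, it follows from $\GWargs{E\times\mathbb{C}^4}{\mgp{T}}{[E]}=1$, as in the paper.
\item Caps of degrees $\pm\delta_i$ glue to a closed $\mathbb{P}^1$, not to a tube. The relation you want is that the $(-\delta_i)$-tube glued to the $\delta_i$-tube is the level-zero tube; this realizes $B_i^{-1}$.
\end{enumerate}
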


\begin{figure}
    \centering
    \begin{tikzpicture}[x=1pt,y=1pt,scale=.5]
        \draw[shift={(78.177, 660.931)}, rotate=61.6306](0, 0) .. controls (0, -16) and (48, -16) .. (48, 0);
        \draw[shift={(87.542, 661.811)}, rotate=61.6306](0, 0) .. controls (6.776, 11.823) and (30.776, 11.823) .. (37.433, -0.07);
        \draw[shift={(105.864, 630.826)}, rotate=61.6306](0, 0) .. controls (10.6667, -4) and (24, 1.3333) .. (37.3333, 1.3333) .. controls (50.6667, 1.3333) and (64, -4) .. (74.6667, 0) .. controls (85.3333, 4) and (93.3333, 17.3333) .. (94.6667, 29.3333) .. controls (96, 41.3333) and (90.6667, 52) .. (80, 60) .. controls (69.3333, 68) and (53.3333, 73.3333) .. (37.3333, 73.3333) .. controls (21.3333, 73.3333) and (5.3333, 68) .. (-5.3333, 60) .. controls (-16, 52) and (-21.3333, 41.3333) .. (-20, 29.3333) .. controls (-18.6667, 17.3333) and (-10.6667, 4) .. cycle;
        \draw[shift={(206.623, 786.071)}, rotate=15.0598](0, 0) .. controls (0, -16) and (48, -16) .. (48, 0);
        \draw[shift={(213.7, 779.874)}, rotate=15.0598](0, 0) .. controls (6.776, 11.823) and (30.776, 11.823) .. (37.433, -0.07);
        \draw[shift={(203.794, 745.268)}, rotate=15.0598](0, 0) .. controls (10.6667, -4) and (24, 1.3333) .. (37.3333, 1.3333) .. controls (50.6667, 1.3333) and (64, -4) .. (74.6667, 0) .. controls (85.3333, 4) and (93.3333, 17.3333) .. (94.6667, 29.3333) .. controls (96, 41.3333) and (90.6667, 52) .. (80, 60) .. controls (69.3333, 68) and (53.3333, 73.3333) .. (37.3333, 73.3333) .. controls (21.3333, 73.3333) and (5.3333, 68) .. (-5.3333, 60) .. controls (-16, 52) and (-21.3333, 41.3333) .. (-20, 29.3333) .. controls (-18.6667, 17.3333) and (-10.6667, 4) .. cycle;
        \draw[shift={(300.043, 764.672)}, rotate=-15.0413](0, 0) .. controls (10.6667, -4) and (24, 1.3333) .. (37.3333, 1.3333) .. controls (50.6667, 1.3333) and (64, -4) .. (74.6667, 0) .. controls (85.3333, 4) and (93.3333, 17.3333) .. (94.6667, 29.3333) .. controls (96, 41.3333) and (90.6667, 52) .. (80, 60) .. controls (69.3333, 68) and (53.3333, 73.3333) .. (37.3333, 73.3333) .. controls (21.3333, 73.3333) and (5.3333, 68) .. (-5.3333, 60) .. controls (-16, 52) and (-21.3333, 41.3333) .. (-20, 29.3333) .. controls (-18.6667, 17.3333) and (-10.6667, 4) .. cycle;
        \draw[shift={(434.656, 696.528)}, rotate=-61.6295](0, 0) .. controls (10.6667, -4) and (24, 1.3333) .. (37.3333, 1.3333) .. controls (50.6667, 1.3333) and (64, -4) .. (74.6667, 0) .. controls (85.3333, 4) and (93.3333, 17.3333) .. (94.6667, 29.3333) .. controls (96, 41.3333) and (90.6667, 52) .. (80, 60) .. controls (69.3333, 68) and (53.3333, 73.3333) .. (37.3333, 73.3333) .. controls (21.3333, 73.3333) and (5.3333, 68) .. (-5.3333, 60) .. controls (-16, 52) and (-21.3333, 41.3333) .. (-20, 29.3333) .. controls (-18.6667, 17.3333) and (-10.6667, 4) .. cycle;
        \draw[shift={(102.562, 532.677)}, rotate=91.7378](0, 0) .. controls (10.6667, -4) and (24, 1.3333) .. (37.3333, 1.3333) .. controls (50.6667, 1.3333) and (64, -4) .. (74.6667, 0) .. controls (85.3333, 4) and (93.3333, 17.3333) .. (94.6667, 29.3333) .. controls (96, 41.3333) and (90.6667, 52) .. (80, 60) .. controls (69.3333, 68) and (53.3333, 73.3333) .. (37.3333, 73.3333) .. controls (21.3333, 73.3333) and (5.3333, 68) .. (-5.3333, 60) .. controls (-16, 52) and (-21.3333, 41.3333) .. (-20, 29.3333) .. controls (-18.6667, 17.3333) and (-10.6667, 4) .. cycle;
        \draw[shift={(169.641, 427.386)}, rotate=130.346](0, 0) .. controls (10.6667, -4) and (24, 1.3333) .. (37.3333, 1.3333) .. controls (50.6667, 1.3333) and (64, -4) .. (74.6667, 0) .. controls (85.3333, 4) and (93.3333, 17.3333) .. (94.6667, 29.3333) .. controls (96, 41.3333) and (90.6667, 52) .. (80, 60) .. controls (69.3333, 68) and (53.3333, 73.3333) .. (37.3333, 73.3333) .. controls (21.3333, 73.3333) and (5.3333, 68) .. (-5.3333, 60) .. controls (-16, 52) and (-21.3333, 41.3333) .. (-20, 29.3333) .. controls (-18.6667, 17.3333) and (-10.6667, 4) .. cycle;
        \draw[shift={(259.94, 388.676)}, rotate=160.4666](0, 0) .. controls (10.6667, -4) and (24, 1.3333) .. (37.3333, 1.3333) .. controls (50.6667, 1.3333) and (64, -4) .. (74.6667, 0) .. controls (85.3333, 4) and (93.3333, 17.3333) .. (94.6667, 29.3333) .. controls (96, 41.3333) and (90.6667, 52) .. (80, 60) .. controls (69.3333, 68) and (53.3333, 73.3333) .. (37.3333, 73.3333) .. controls (21.3333, 73.3333) and (5.3333, 68) .. (-5.3333, 60) .. controls (-16, 52) and (-21.3333, 41.3333) .. (-20, 29.3333) .. controls (-18.6667, 17.3333) and (-10.6667, 4) .. cycle;
        \draw[shift={(387.545, 413.689)}, rotate=-160.361](0, 0) .. controls (10.6667, -4) and (24, 1.3333) .. (37.3333, 1.3333) .. controls (50.6667, 1.3333) and (64, -4) .. (74.6667, 0) .. controls (85.3333, 4) and (93.3333, 17.3333) .. (94.6667, 29.3333) .. controls (96, 41.3333) and (90.6667, 52) .. (80, 60) .. controls (69.3333, 68) and (53.3333, 73.3333) .. (37.3333, 73.3333) .. controls (21.3333, 73.3333) and (5.3333, 68) .. (-5.3333, 60) .. controls (-16, 52) and (-21.3333, 41.3333) .. (-20, 29.3333) .. controls (-18.6667, 17.3333) and (-10.6667, 4) .. cycle;
        \draw[shift={(475.702, 607.31)}, rotate=-91.7378](0, 0) .. controls (10.6667, -4) and (24, 1.3333) .. (37.3333, 1.3333) .. controls (50.6667, 1.3333) and (64, -4) .. (74.6667, 0) .. controls (85.3333, 4) and (93.3333, 17.3333) .. (94.6667, 29.3333) .. controls (96, 41.3333) and (90.6667, 52) .. (80, 60) .. controls (69.3333, 68) and (53.3333, 73.3333) .. (37.3333, 73.3333) .. controls (21.3333, 73.3333) and (5.3333, 68) .. (-5.3333, 60) .. controls (-16, 52) and (-21.3333, 41.3333) .. (-20, 29.3333) .. controls (-18.6667, 17.3333) and (-10.6667, 4) .. cycle;
        \draw[shift={(455.677, 484.454)}, rotate=-130.2451](0, 0) .. controls (10.6667, -4) and (24, 1.3333) .. (37.3333, 1.3333) .. controls (50.6667, 1.3333) and (64, -4) .. (74.6667, 0) .. controls (85.3333, 4) and (93.3333, 17.3333) .. (94.6667, 29.3333) .. controls (96, 41.3333) and (90.6667, 52) .. (80, 60) .. controls (69.3333, 68) and (53.3333, 73.3333) .. (37.3333, 73.3333) .. controls (21.3333, 73.3333) and (5.3333, 68) .. (-5.3333, 60) .. controls (-16, 52) and (-21.3333, 41.3333) .. (-20, 29.3333) .. controls (-18.6667, 17.3333) and (-10.6667, 4) .. cycle;
        \draw[densely dotted](179.6095, 762.8355) arc[start angle=120.1197, end angle=136.8902, radius=216];
        \draw[densely dotted](483.6937, 484.5671) arc[start angle=-25.0431, end angle=-16.7831, radius=216];
        \draw[densely dotted](445.6872, 723.6169) arc[start angle=43.1108, end angle=59.8988, radius=216];
        \draw[densely dotted](81.2006, 513.63) arc[start angle=-163.2169, end angle=-154.8071, radius=216];
        \draw[densely dotted](269.8923, 360.7603) arc[start angle=-94.8089, end angle=-84.871, radius=216];
        \node at (77.649, 625.0761) {$\bullet$};
        \node at (164.602, 398.718) {$\bullet$};
        \node at (412.256, 399.3182) {$\bullet$};
        \node at (498.3451, 625.1012) {$\bullet$};
        \node at (288.0003, 792) {$\bullet$};
        \node at (130.3102, 723.6141) {$\bullet$};
        \node at (179.6095, 762.8355) {$\bullet$};
        \node at (396.3303, 762.8704) {$\bullet$};
        \node at (445.6872, 723.6169) {$\bullet$};
        \node at (494.7994, 513.63) {$\bullet$};
        \node at (483.6937, 484.5671) {$\bullet$};
        \node at (307.3101, 360.8649) {$\bullet$};
        \node at (81.2006, 513.63) {$\bullet$};
        \node at (92.546, 484.0559) {$\bullet$};
        \node at (269.8923, 360.7603) {$\bullet$};
        \draw[decorate,decoration={brace,amplitude=5pt,raise=-5pt,mirror}](116.603, 615.9883) -- (288.0006, 752) node[midway,xshift=11pt,yshift=-7pt] {$g_C-1$};
        \draw[decorate,decoration={brace,amplitude=5pt,raise=-5pt,mirror}](288.0006, 752) -- (459.3924, 616.0081) node[midway,xshift=-13pt,yshift=-7pt] {$\deg \cL_2$};
        \draw[decorate,decoration={brace,amplitude=5pt,raise=-5pt,mirror}](459.3924, 616.0081) -- (389.2453, 432.0369) node[midway,xshift=-20pt,yshift=2pt] {$\deg \cL_3$};
        \draw[decorate,decoration={brace,amplitude=5pt,raise=-5pt,mirror}](389.2453, 432.0369) -- (187.4534, 431.5479) node[midway,yshift=8pt] {$\deg \cL_4$};
        \draw[decorate,decoration={brace,amplitude=5pt,raise=-5pt,mirror}](187.4534, 431.5479) -- (116.603, 615.9883) node[midway,xshift=20pt,yshift=2pt] {$\deg \cL_5$};
        \node at (421, 816) {\tiny$(1,0,0,0)$};
        \node at (552, 712) {\tiny$(1,0,0,0)$};
        \node at (586, 576) {\tiny$(0,1,0,0)$};
        \node at (514, 392) {\tiny$(0,1,0,0)$};
        \node at (400, 320) {\tiny$(0,0,1,0)$};
        \node at (176, 320) {\tiny$(0,0,1,0)$};
        \node at (62, 392) {\tiny$(0,0,0,1)$};
        \node at (-10, 576) {\tiny$(0,0,0,1)$};
        \node at (24, 712) {\tiny$(0,0,0,0)$};
        \node at (155, 816) {\tiny$(0,0,0,0)$};
    \end{tikzpicture}
    \caption{Degeneration of a local curve. The brackets $(d_2,\ldots,d_5)$ indicate the line bundle degrees $(\deg \cL_2,\ldots,\deg \cL_5)|_{C_k}$ on each irreducible component $C_k$ of the degenerate curve.}
    \label{fig: degeneration}
\end{figure}

\begin{proof}
    Suppose first that $g_C>0$ and $\deg \cL_i \geq 0$ for all $i$. We degenerate the base curve $C$ into a necklace of rational and genus-one curves, as indicated in \cref{fig: degeneration}, and extend the line bundles $\cL_2,\ldots,\cL_5$ so that the levels, i.e. the degrees on each irreducible component, are as shown in the figure. Applying the degeneration formula for Gromov--Witten invariants \cite{Li02:degFormula} then yields the claimed factorization, with the series $A,B_2,\ldots,B_5$ identified with the generating series of the corresponding relative Gromov--Witten invariants attached to the components appearing in \cref{fig: degeneration}.

    The same universal formula also holds when $g_C=0$, since $A^{-1}$ can be identified with the generating series of relative Gromov--Witten invariants of two level $(0,0,0,0)$ caps (and one may then repeat the same argument with an open necklace in place of the closed one). This follows from the fact that for $g_C=1$ one has \smash{$\GWargs{C\times \Aaff{4}}{\mgp{T}}{[C]} = 1$} together with the relation obtained from the degeneration shown in \cref{fig:deg:elliptic}.
    \begin{figure}
        \begin{center}
        \begin{tikzpicture}[x=1pt,y=1pt,scale=.45]
            \fill[opacity=.1](596.69, 607.2301) arc[start angle=-109.0412, end angle=-26.5643, radius=34.6667] .. controls (629.0023, 616.1657) and (614.8967, 610.41) .. (596.69, 607.23) -- cycle;
            \fill[opacity=.1](448, 581.333) .. controls (469.333, 576) and (490.667, 576) .. (512, 581.333) .. controls (533.333, 586.667) and (554.667, 597.333) .. (565.333, 613.333) .. controls (543.111, 593.7777) and (504, 583.111) .. (448, 581.333) -- cycle;
            \fill[opacity=.1](340.69, 607.2301) arc[start angle=-109.0412, end angle=-26.5643, radius=34.6667] .. controls (373.0023, 616.1657) and (358.8967, 610.41) .. (340.69, 607.23) -- cycle;
            \fill[opacity=.1](466.727, 653.184) .. controls (454.2423, 652.3947) and (445.3333, 650.6667) .. (440, 648) .. controls (442.108, 644.838) and (444.772, 642.092) .. (447.845, 639.763) .. controls (451.906, 646.51) and (458.778, 650.901) .. (466.686, 653.173);
            \draw(394.6667, 613.3333) .. controls (405.3333, 597.3333) and (426.6667, 586.6667) .. (448, 581.3333) .. controls (469.3333, 576) and (490.6667, 576) .. (512, 581.3333) .. controls (533.3333, 586.6667) and (554.6667, 597.3333) .. (565.3333, 613.3333) .. controls (576, 629.3333) and (576, 650.6667) .. (565.3333, 666.6667) .. controls (554.6667, 682.6667) and (533.3333, 693.3333) .. (512, 698.6667) .. controls (490.6667, 704) and (469.3333, 704) .. (448, 698.6667) .. controls (426.6667, 693.3333) and (405.3333, 682.6667) .. (394.6667, 666.6667) .. controls (384, 650.6667) and (384, 629.3333) .. cycle;
            \draw(440, 648) .. controls (456, 624) and (504, 624) .. (520, 648);
            \draw(448, 640) .. controls (460, 660) and (500, 660) .. (512, 640);
            \draw(352, 640) circle[radius=34.6667];
            \draw(608, 640) circle[radius=34.6667];
            \node at (386.6667, 640) {$\bullet$};
            \node at (573.3333, 640) {$\bullet$};
            \node at (480, 720) {\small$(0,0,0,0)$};
            \node at (352, 688) {\small$(0,0,0,0)$};
            \node at (608, 688) {\small$(0,0,0,0)$};
            \fill[opacity=.1](0, 581.3333) .. controls (21.3333, 576) and (42.6667, 576) .. (64, 581.3333) .. controls (85.3333, 586.6667) and (106.6667, 597.3333) .. (117.3333, 613.3333) .. controls (95.1111, 593.7778) and (56, 583.1111) .. (0, 581.3333) -- cycle;
            \fill[opacity=.1](18.7267, 653.1845) .. controls (6.2422, 652.3948) and (-2.6667, 650.6667) .. (-8, 648) .. controls (-5.8919, 644.8378) and (-3.2282, 642.0923) .. (-0.1555, 639.7634) .. controls (3.9058, 646.5097) and (10.778, 650.9006) .. (18.6856, 653.1727);
            \draw(-53.3333, 613.3333) .. controls (-42.6667, 597.3333) and (-21.3333, 586.6667) .. (0, 581.3333) .. controls (21.3333, 576) and (42.6667, 576) .. (64, 581.3333) .. controls (85.3333, 586.6667) and (106.6667, 597.3333) .. (117.3333, 613.3333) .. controls (128, 629.3333) and (128, 650.6667) .. (117.3333, 666.6667) .. controls (106.6667, 682.6667) and (85.3333, 693.3333) .. (64, 698.6667) .. controls (42.6667, 704) and (21.3333, 704) .. (0, 698.6667) .. controls (-21.3333, 693.3333) and (-42.6667, 682.6667) .. (-53.3333, 666.6667) .. controls (-64, 650.6667) and (-64, 629.3333) .. cycle;
            \draw(-8, 648) .. controls (8, 624) and (56, 624) .. (72, 648);
            \draw(0, 640) .. controls (12, 660) and (52, 660) .. (64, 640);
            \node at (32, 720) {\small$(0,0,0,0)$};
            \draw[->](192, 640) -- (256, 640);
        \end{tikzpicture}
        \end{center}
        \caption{Degeneration of a local elliptic curve at level $(0,0,0,0)$ into a relative elliptic curve with two caps, all at level $(0,0,0,0)$.}
        \label{fig:deg:elliptic}
    \end{figure}
    
    Finally, the universal factorization extends to negative line bundle degrees by a similar degeneration argument involving the tube at level $(0,0,0,0)$.
\end{proof}

\begin{proof}[Proof of \cref{thm: local curve GW g formula}]
    To prove
    \begin{equation}
        \label{eq: local curve GW g formula proof}
        \GWargs{\CYf}{\mgp{T}}{[C]}
        =
        \prod_{i=2}^5 \cs(\epsilon_i)^{g_C-1-\deg \cL_i},
    \end{equation}
    we determine the universal series of \cref{lem: local curve GW universal series} under the Calabi--Yau conditions $\sum_{i} \epsilon_i = 0$ and $\sum_{i} \deg \cL_i = 2g_C-2$. We have already established the required formula for a rational base curve, assuming \cref{conj:GW:vertex}, in \cref{cor: local curve GW g0 formula}. In particular, it holds whenever the degrees are $(-2,0,0,0)$, or any permutation thereof. This gives
    \begin{equation}
        A^{-1} B_i^{-2}
        =
        \frac{\cs(\epsilon_i)^2}{\prod_{j=2}^5 \cs(\epsilon_j)}
    \end{equation}
    for every $i\in\{2,\ldots,5\}$. It follows from \cref{lem: local curve GW universal series} that \eqref{eq: local curve GW g formula proof} holds up to sign. The sign is then fixed by the leading term
    \begin{equation}
        \GWargs{\CYf}{\mgp{T}}{[C]}
        =
        \prod_{i=2}^5 \epsilon_i^{g_C-1-\deg \cL_i}
        + \cdots \,.
    \end{equation}
    This concludes the proof.
\end{proof}

\subsection{Pointed Hodge integrals for \texorpdfstring{$n>1$}{n>1}}
\label{sec: proof C n larger one}
We proceed with the proof of \cref{thm: 5 Hodge n point formula} when $n>1$. We will treat this case following a similar strategy as in the last section. Fix $n\in\{2,\ldots,5\}$, and let $\CYf$ be the partial compactification of the first $n$ coordinate lines of $\Aaff{5}$ to rational curves $C_1,\ldots,C_n$, obtained by adding a divisor $\Aaff{4}$ at infinity. As before, a torus $\mgp{T}$ acts with tangent weights $\epsilon_1,\ldots,\epsilon_5$ satisfying $\sum_i \epsilon_i=0$ at the origin. Our goal is to compare the conjectural identity of \cref{conj:GW:vertex}, namely 
\begin{equation}
\label{eq: GW compact vertex formula}
    \GWargs{\CYf}{\mgp{T}}{[C_1]+\cdots+[C_n]}
    \overset{?}{=}
    \hat{\mathsf{A}}_{\mgp{T}}\Big(
        \Mbar_0\big(\CYf,[C_1]+\cdots+[C_n]\big)
    \Big) \,,
\end{equation}
with the Hodge integral formula of \cref{thm: 5 Hodge n point formula}.

\subsubsection{Evaluating the \texorpdfstring{$\hat{\mathsf{A}}$}{Â}-genus}
The \smash{$\hat{\mathsf{A}}$}-genus appearing on the right-hand side of \labelcref{eq: GW compact vertex formula} is evaluated as follows.

\begin{prop}
    \label{prop: Ahat n greater 1}
    For $n\in \{2,\ldots,5\}$ we have
    \begin{equation}
        \hat{\mathsf{A}}_{\mgp{T}}\Big( \Mbar_{0}\big(\CYf,[C_1]+\cdots+[C_n]\big) \Big)
        =
        \vertex(\epsvec{[n]}) \,
        \prod_{j=1}^n \hat{\mathsf{A}}_{\mgp{T}}\Big(
            \Mbar_{0}\big(\Tot \, N_{C_j/\CYf}, [C_j]\big)
        \Big) \,,
    \end{equation}
    where the function $\vertex$ is defined as in \cref{eq: vertex function} (here, with $u = 1$):
    \begin{equation}
        \vertex(\epsvec{[n]})
        \coloneqq
        \begin{cases}
            \dfrac{\cs(\epsilon_3)\cs(\epsilon_4)\cs(\epsilon_5)}{\cs(\epsilon_1+\epsilon_2)}
            & n=2 \,, \\[0.95em]
            \cs(\epsilon_4)^2 \cs(\epsilon_5)^2
            & n=3 \,, \\[0.75em]
            \Big(\prod_{i=1}^5 \cs(\epsilon_i) \Big) \, \cs(\epsilon_5)^2 \sum_{i=1}^4 \kappa(\epsilon_i)
            & n=4 \,, \\[0.75em]
            \Big(\prod_{i=1}^5 \cs(\epsilon_i) \Big)^2 \Big(\big(\sum_{i=1}^5 \kappa(\epsilon_i)\big)^2 -\frac{1}{4} \Big)
            & n=5 \,,
        \end{cases}
    \end{equation}
    and we recall the notation $\cs(x)\coloneqq 2 \sinh \tfrac{x}{2}$ and $\kappa(x) \coloneqq \tfrac{1}{2} \coth \tfrac{x}{2}$.
\end{prop}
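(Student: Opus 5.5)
We would compute the left-hand side by virtual localization, in the same way as \cref{lem: Ahat local curve formula}. The idea is that the contributions of the individual lines $C_j$ factor out, and what remains is a small intersection number on $\Mbar_{0,n}$.

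\emph{Step 1: the fixed locus.} A $\mgp{T}$-fixed genus-zero stable map in class $[C_1]+\cdots+[C_n]$ must map one component $D_j\cong\bP^1$ isomorphically onto each $C_j$. By property (ii) of $\CYf_n$ and the degree count, all other components are contracted to the origin $0\in\Aaff{5}$. Since the domain is a tree, the preimages of $0$ on $D_1,\ldots,D_n$ are joined through a (possibly empty) genus-zero contracted curve. Hence the fixed locus $F$ is isomorphic to $\Mbar_{0,n}$ for $n\geq 3$, with marking $j$ recording the attachment point of $D_j$. For $n=2$ it is a single point, the two lines meeting in a node. Configurations where some $D_j$ meet directly are exactly the boundary of $\Mbar_{0,n}$, as for the usual vertex treatment of a contracted component.

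\emph{Step 2: the virtual normal bundle.} Using the normalization sequence for $f^*T_{\CYf}$, we would write in $K_{\mgp{T}}(F)$
\begin{equation*}
    T^{\vir}\big|_F
    =
    T_{\Mbar_{0,n}}
    +
    \sum_{j=1}^n \Big( \mathbf{R}\Gamma(D_j, f^*T_{\CYf}) - \mathrm{Aut}(D_j,p_j) + (T_{p_j}D_j\otimes \mathbb{L}_j^\vee) \Big)
    - (n-1)\, T_0\CYf \,.
\end{equation*}
Here $\mathbb{L}_j$ is the cotangent line at marking $j$, so the node-smoothing term has Chern root $\epsilon_j-\psi_j$. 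We would then compare this with the fixed-point tangent space of $\Mbar_0(\Tot N_{C_j/\CYf},[C_j])$, which is $\mathbf{R}\Gamma(C_j,T_{\CYf}|_{C_j})-H^0(T_{C_j})$. The two differ by the weight $\epsilon_j$ coming from $\mathrm{Aut}(D_j,p_j)$ versus $\mathrm{Aut}(D_j)$, and this weight combines with the node term $\epsilon_j-\psi_j$. Since $\hat{\mathsf{A}}(x)/x=\cs(x)^{-1}$, after dividing by $\prod_j\hat{\mathsf{A}}_{\mgp{T}}(\Mbar_0(\Tot N_{C_j},[C_j]))$ we expect
\begin{equation*}
    \vertex(\epsvec{[n]})
    =
    \Big(\prod_{i=1}^5 \cs(\epsilon_i)\Big)^{n-1}\prod_{j=1}^n \cs(\epsilon_j)
    \int_{\Mbar_{0,n}} \hat{\mathsf{A}}\big(T_{\Mbar_{0,n}}\big)\prod_{j=1}^n \frac{1}{\cs(\epsilon_j-\psi_j)} \,,
\end{equation*}
possibly up to a bookkeeping correction in the prefactor that must be fixed carefully.

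\emph{Step 3: case-by-case evaluation.} For $n=2$ there is no integral. The formula should reduce directly to $\cs(\epsilon_3)\cs(\epsilon_4)\cs(\epsilon_5)/\cs(\epsilon_1+\epsilon_2)$, where the denominator is the smoothing weight $\epsilon_1+\epsilon_2$ of the node. For $n=3$ the integral is a point, and using $\sum_i\epsilon_i=0$ should give $\cs(\epsilon_4)^2\cs(\epsilon_5)^2$. For $n=4$, $\Mbar_{0,4}=\bP^1$ and $\hat{\mathsf{A}}(T)$ has no degree-one part. So only the linear term of $\cs(\epsilon_j-\psi_j)^{-1}=\cs(\epsilon_j)^{-1}(1+\kappa(\epsilon_j)\psi_j+\cdots)$ contributes, and $\int\psi_j=1$ gives $\sum_{j\le 4}\kappa(\epsilon_j)$.

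For $n=5$ we would expand to second order. The inputs are $\int\psi_i\psi_j=1$ for $i\ne j$, $\int\psi_j^2=1$, and the second-order Taylor coefficient of $\cs(\epsilon_j-\psi_j)^{-1}$, which is $\tfrac12(\kappa(\epsilon_j)^2-\kappa'(\epsilon_j))$. We also need the term $-\tfrac1{24}p_1(T_{\Mbar_{0,5}})$, using $\int p_1=\int(c_1^2-2c_2)=5-14=-9$. The identity $\kappa'=\tfrac14-\kappa^2$ should then assemble everything into $(\sum_j\kappa(\epsilon_j))^2-\tfrac14$.

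\emph{Main obstacle.} The Taylor expansions themselves are routine. We expect the main difficulty to be the exact bookkeeping in Step 2: which copies of $T_0\CYf$ and which weights ($0$ versus $\pm\epsilon_j$) in $\mathrm{Aut}(D_j,p_j)$ cancel against the local-curve factors. This determines the prefactors, and in particular the second copy of $\prod_i\cs(\epsilon_i)$ for $n=5$ and the extra $\cs(\epsilon_5)^2$ for $n=4$. We would fix these by checking the $n=2,3$ cases first, where no integral appears.
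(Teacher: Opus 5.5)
Your strategy is the paper's: localize, identify the fixed locus with a point ($n=2$) or $\Mbar_{0,n}$ ($n\ge 3$), factor out the local-curve contributions, and evaluate a residual integral over $\Mbar_{0,n}$. However, two of your inputs are wrong, and with them the computation does not produce the stated formulas.

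\textbf{The sign of $\epsilon_j$ in your Step~2 prefactor.} Compared with the local curve, $T^{\vir}|_F$ contains $H^0(T_{D_j})-H^0(T_{D_j}(-p_j))$. In weights this is $\{\epsilon_j,0,-\epsilon_j\}$ minus $\{0,-\epsilon_j\}$, i.e.\ the single weight $+\epsilon_j$. It enters $T^{\vir}$ positively, so it contributes $\hat{\mathsf{A}}(\epsilon_j)/\epsilon_j=\cs(\epsilon_j)^{-1}$, not $\cs(\epsilon_j)$. The correct reduction is
\begin{equation*}
    \vertex(\epsvec{[n]})
    =
    \frac{\bigl(\prod_{i=1}^5\cs(\epsilon_i)\bigr)^{n-1}}{\prod_{j=1}^n\cs(\epsilon_j)}
    \int_{\Mbar_{0,n}} \hat{\mathsf{A}}\bigl(T_{\Mbar_{0,n}}\bigr)\prod_{j=1}^n\frac{1}{\cs(\epsilon_j-\psi_j)} \,.
\end{equation*}
This is exactly the paper's count: $\prod_i\cs(\epsilon_i)^{-1}$ for the contracted component, $\prod_{k\ne j}\cs(\epsilon_k)$ for the image of each node, and $\cs(\epsilon_j-\psi_j)^{-1}$ for each smoothing.

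Your version gives $(\prod_i\cs(\epsilon_i))^2$ for $n=3$ instead of $\cs(\epsilon_4)^2\cs(\epsilon_5)^2$. Fitting the prefactor to the $n=2,3$ answers would not be a derivation anyway. Once the sign is right, the factors you flagged as obstacles come out automatically. The prefactor is $(\prod_i\cs(\epsilon_i))^3/\prod_{j\le 4}\cs(\epsilon_j)$ for $n=4$ and $(\prod_i\cs(\epsilon_i))^3$ for $n=5$.

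A minor point: for $n\ge3$ no two $D_j$ can meet directly. The boundary of $\Mbar_{0,n}$ corresponds to a reducible contracted curve, not to lines meeting. This does not affect the result.

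\textbf{The $n=5$ intersection numbers.} For $i\neq j$ one has $\int_{\Mbar_{0,5}}\psi_i\psi_j=\binom{2}{1,1}=2$, not $1$. Write $\kappa_j=\kappa(\epsilon_j)$. With your value the integral comes out as $\tfrac12(\sum_j\kappa_j)^2+\tfrac12\sum_j\kappa_j^2-\tfrac14$, which does not assemble into the target. With the correct value, use your coefficient $\tfrac12(\kappa_j^2-\kappa_j')=\kappa_j^2-\tfrac18$ and your $\hat{\mathsf{A}}(T_{\Mbar_{0,5}})=1+\tfrac38[\mathrm{pt}]$. This gives
\begin{equation*}
    2\sum_{i<j}\kappa_i\kappa_j+\sum_{j}\Bigl(\kappa_j^2-\frac18\Bigr)+\frac38=\Bigl(\sum_{j=1}^5\kappa_j\Bigr)^2-\frac14 \,.
\end{equation*}
This multiplies the factor $\prod_i\cs(\epsilon_i)^{-1}$ from the expansion. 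Together with the corrected prefactor $(\prod_i\cs(\epsilon_i))^3$, it yields the stated $n=5$ formula. Your $n=4$ integral and your $\hat{\mathsf{A}}$-class computation are correct as they stand.
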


\begin{proof}
    For ease of notation, set $\beta_n \coloneqq [C_1]+\cdots+[C_n]$. We evaluate the $\hat{\mathsf{A}}$-genus by localization:
    \begin{equation}
        \hat{\mathsf{A}}_{\mgp{T}}\Big( \Mbar_{0}\big(\CYf,\beta_n\big) \Big)
        =
        \int_{[\Mbar_{0}(\CYf, \beta_n)^{\mgp{T}}]^{\vir}}
            \frac{\hat{\mathsf{A}}\Big( T^{\vir}\big|_{\Mbar_{0}(\CYf, \beta_n)^{\mgp{T}}} \Big)}{e_{\mgp{T}}\big( N^{\vir} \big)} \,.
    \end{equation}
    To describe the fixed locus, note that the domain of a $\mgp{T}$-fixed genus-zero stable map to $\CYf$ of class $\beta_n$ contains $n$ rational components, each mapped isomorphically to one of the compactified coordinate lines. If $n=2$, the two components are joined directly by a node mapping to the origin. If $n\geq 3$, the $n$ components are attached to a contracted $n$-pointed rational curve. Thus
    \begin{equation}
        \Mbar_{0}\big(\CYf,\beta_2 \big)^{\mgp{T}} \cong \mathrm{pt}
        \qquad\text{and}\qquad
        \Mbar_{0}\big(\CYf,\beta_n \big)^{\mgp{T}} \cong \Mbar_{0,n}
        \quad\text{for }n\ge 3 \,.
    \end{equation}
    Let us first consider the case $n=2$. The deformations and obstructions of the stable map $C_1 \cup C_2 \hookrightarrow \CYf$ decompose into the deformations and obstructions of the two irreducible components modulo deformations of the node mapping to the origin and its smoothing. Combining these contributions gives
    \begin{equation}
        \hat{\mathsf{A}}_{\mgp{T}}\Big( \Mbar_{0}\big(\CYf,\beta_2\big) \Big)
        =
        \frac{\cs(\epsilon_3) \, \cs(\epsilon_4) \, \cs(\epsilon_5)}{\cs(\epsilon_1+\epsilon_2)} \,
        \prod_{j=1}^2
        \hat{\mathsf{A}}_{\mgp{T}}\Big(
            \Mbar_{0}\big(\Tot \, N_{C_j/\CYf}, [C_j]\big)
        \Big) \,.
    \end{equation}
    The two factors involving $\Tot \, N_{C_j/\CYf}$ are the local-curve contributions of the two irreducible components. The remaining factors come from the node: $\prod_{k=3}^5 \cs(\epsilon_k)$ records deformations of its image in the three transverse directions at the origin, while $\cs(\epsilon_1+\epsilon_2)^{-1}$ is the contribution of the smoothing of the node.
    
    For $n\geq 3$ we argue in the same way. The deformation/obstruction theory splits into two parts: the contributions coming from the $n$ non-contracted components mapping isomorphically to the coordinate lines $C_j$, and the contribution coming from the $n$-pointed contracted component modulo contributions from the $n$ nodes attaching it to the $C_j$. Factoring out the first part gives
    \begin{equation}
        \hat{\mathsf{A}}_{\mgp{T}}\Big( \Mbar_{0}\big(\CYf,\beta_n\big) \Big)
        =
        \left(
            \int_{\Mbar_{0,n}}
                \frac{\hat{\mathsf{A}}\big( \tilde{T}^{\vir} \big)}{e_{\mgp{T}}\big( \tilde{N}^{\vir} \big)}
        \right)
        \prod_{j=1}^n
            \hat{\mathsf{A}}_{\mgp{T}}\Big(
                \Mbar_{0}\big(\Tot \, N_{C_j/\CYf}, [C_j]\big)
            \Big)
        \,.
    \end{equation}
    For $n=3$, the moduli space $\Mbar_{0,3}$ is a point, so the remaining factor is simply the contribution of the contracted component, modulo the contributions of the three nodes and their smoothings. The contracted component contributes \smash{$\prod_{i=1}^5 \cs(\epsilon_i)^{-1}$}, corresponding to its five tangent directions in $\CYf$. For each of the three nodes, the deformations of its image in the directions transverse to the corresponding branch $C_j$ contribute \smash{$\prod_{k\neq j} \cs(\epsilon_k)$}, while smoothing the node contributes \smash{$\cs(\epsilon_j)^{-1}$}. Altogether, this gives
    \begin{equation}
        \int_{\Mbar_{0,n}}
            \frac{\hat{\mathsf{A}}\big( \tilde{T}^{\vir} \big)}{e_{\mgp{T}}\big( \tilde{N}^{\vir} \big)}
        =
        \frac{1}{\prod_{i=1}^5 \cs(\epsilon_i)}
        \Bigg(
            \prod_{j=1}^3 \prod_{k\neq j} \cs(\epsilon_k)
        \Bigg)\Bigg(
            \prod_{j=1}^3 \frac{1}{\cs(\epsilon_j)}
         \Bigg)
         =
         \cs(\epsilon_4)^2 \cs(\epsilon_5)^2 \,.
    \end{equation}
    For $n\in\{4,5\}$, the contributions of the non-contracted components and of the image of each node are unchanged. The difference is that the central contracted component now has non-trivial moduli, so the contribution of the node smoothings must be integrated over \smash{$\Mbar_{0,n}$}. More precisely, we obtain
    \begin{equation}
        \int_{\Mbar_{0,n}}
            \frac{\hat{\mathsf{A}}\big( \tilde{T}^{\vir} \big)}{e_{\mgp{T}}\big( \tilde{N}^{\vir} \big)}
        =
        \frac{1}{\prod_{i=1}^5 \cs(\epsilon_i)}
        \Bigg(
            \prod_{j=1}^n \prod_{k\neq j} \cs(\epsilon_k)
        \Bigg)
        \int_{\Mbar_{0,n}}
            \frac{
                \hat{\mathsf{A}}\big(T_{\Mbar_{0,n}}\big)
            }{
                \prod_{j=1}^n \cs(\epsilon_j-\psi_j)
            } \,.
    \end{equation}
    To evaluate the last integral, note that for $n=4$ we have \smash{$\hat{\mathsf{A}}\big(T_{\Mbar_{0,4}}\big)=1$}, since $\Mbar_{0,4}$ is one-dimensional. Together with the expansion
    \begin{equation}
        \frac{1}{\cs(\epsilon_j-\psi_j)}
        =
        \frac{1}{\cs(\epsilon_j)}
        +
        \frac{\kappa(\epsilon_j)}{\cs(\epsilon_j)} \, \psi_j
        \in \hhh^{\bullet}(\Mbar_{0,4}) \,,
    \end{equation}
    this immediately gives
    \begin{equation}
        \int_{\Mbar_{0,4}}
            \frac{
                \hat{\mathsf{A}}\big(T_{\Mbar_{0,n}}\big)
            }{
                \prod_{j=1}^n \cs(\epsilon_j-\psi_j)
            }
        =
        \frac{\sum_{j=1}^4 \kappa(\epsilon_j)}{\prod_{j=1}^4 \cs(\epsilon_j)}  \,,
    \end{equation}
    which proves the claimed formula for $n=4$. The case $n=5$ is similar. One now has to expand $\cs(\epsilon_j-\psi_j)^{-1}$ to second order, and there is in addition a non-trivial contribution from\footnote{
        The $\hat{\mathsf{A}}$-class of $\Mbar_{0,5}$ can be computed as follows: since $M = \Mbar_{0,5}$ is a smooth surface, one has $\hat{\mathsf{A}}(T_{M})=1-\frac{1}{24}( c_1(T_{M})^2-2c_2(T_{M}) )$. Moreover, $\Mbar_{0,5}$ is isomorphic to the blow-up of $\bP^2$ at four points in general position, hence $c_1(T_{M})^2=5[\mathrm{pt}]$ and $c_2(T_{M})=7[\mathrm{pt}]$. Therefore $\hat{\mathsf{A}}(T_{M})=1+\frac{3}{8}[\mathrm{pt}]$.
    } \smash{$\hat{\mathsf{A}}(T_{\Mbar_{0,5}})=1+\frac{3}{8}[\mathrm{pt}]$}. We leave the remaining details to the reader.
\end{proof}

\subsubsection{Proof of \texorpdfstring{\cref{thm: 5 Hodge n point formula}}{the equivalence}}
For $n=1$, the equivalence has already been established in \cref{sec: membrane moduli n 1}. We therefore assume $n>1$, and set
\begin{equation}
    \beta_n \coloneqq [C_1]+\cdots+[C_n] \,,
    \qquad
    N_{C_j/\CYf} \cong \bigoplus_{i \ne j} \cL_{i,j} \,,
    \qquad
    d_{i,j} \coloneqq \deg \cL_{i,j} \,.
\end{equation}
Applying virtual localization to the all-genus Gromov--Witten series of $\CYf$ in curve class $\beta_n$, we obtain
\begin{equation}
    \GWargs{\CYf}{\mgp{T}}{\beta_n}
    =
    \HodgeH\big(\epsvec[-1]{[n]};\epsvec[0]{[5]};u\big)
    \prod_{j=1}^n
    \left(
        \frac{u^2}{
            \prod_{i \ne j} \prod_{k=1}^{d_{i,j}-1} (\epsilon_i - k \epsilon_j)
        }
        \,
        \HodgeH(-\epsilon_j^{-1};\epsvec[\infty_j]{[5]};u)
    \right) \,.
\end{equation}
Here \smash{$\epsvec[0]{[5]}=(\epsilon_1,\ldots,\epsilon_5)$} and \smash{$\epsvec[\infty_j]{[5]}=(\epsilon_1^{\infty_j},\ldots,\epsilon_5^{\infty_j})$} denote the tangent weights at the origin $0\in\Aaff{5}\subset \CYf$ and at the fixed point $\infty_j\in C_j$, respectively. The latter is given by
\begin{equation}
    \epsilon_i^{\infty_j} =
    \begin{cases}
        -\epsilon_j &\text{if } i=j \,,\\
        \epsilon_i - d_{i,j}\epsilon_j &\text{if } i\ne j \,.
    \end{cases}
\end{equation}
As before, the first Hodge series is the contribution of the contracted component mapped to the origin, while for each $j$ the second Hodge series is the contribution of the contracted component mapped to the fixed point $\infty_j\in C_j$. The unstable cases are understood via the conventions introduced in \cref{eq: Hodge unstable}. The remaining product of torus weights records the deformations and obstructions of the non-contracted rational components mapping isomorphically to the compactified coordinate lines.

Since the case $n=1$ is already known, we may use \cref{thm: 5 Hodge n point formula} for $n=1$ to rewrite the Hodge series contributions from the points $\infty_j$. We thus obtain
\begin{equation}
\begin{split}
    \GWargs{\CYf}{\mgp{T}}{\beta_n}
    &=
    \HodgeH\big(\epsvec[-1]{[n]};\epsvec{[5]};u\big)
    \prod_{j=1}^n \prod_{i \ne j}
        \frac{
            \gamfct{\epsilon_j^{-1}\epsilon_i - d_{i,j}}{u\epsilon_j}
        }{
            \prod_{k=1}^{d_{i,j}} (\epsilon_i - k \epsilon_j)
        } \\
    &=
    \HodgeH\big(\epsvec[-1]{[n]};\epsvec{[5]};u\big)
    \prod_{j=1}^n
        u^2
        \prod_{i \ne j} \frac{\epsilon_i}{\gamfct{-\epsilon_j^{-1}\epsilon_i}{u\epsilon_j}}
        \prod_{k=0}^{d_{i,j}} \cs\bigl( u(\epsilon_i - k \epsilon_j) \bigr)^{-1}
     \,,
\end{split}
\end{equation}
where in the second equality we used the reflection formula from \cref{lem: gamma fct reflect} together with the Calabi--Yau condition $\sum_{i \neq j} d_{i,j} = -2$. By \cref{lem: Ahat local curve formula}, the product of $\cs$-functions is precisely the \smash{$\hat A$}-genus of the moduli space of genus-zero stable maps to the local curve $\Tot \, N_{C_j/\CYf}$ in class $[C_j]$:
\begin{equation}
    \hat{\mathsf{A}}_{\mgp{T}}\Big(
        \Mbar_{0}\big(\Tot \, N_{C_j/\CYf}, [C_j]\big)
    \Big)
    =
    \prod_{i \ne j}
    \prod_{k=0}^{d_{i,j}} \cs\bigl( u(\epsilon_i - k \epsilon_j) \bigr)^{-1} \,.
\end{equation}
Combining this with \cref{prop: Ahat n greater 1}, we conclude that $\GWargs{\CYf}{\mgp{T}}{\beta_n} = \hat{\mathsf{A}}_{\mgp{T}}\big( \Mbar_{0}(\CYf,\beta_n) \big)$ holds if and only if
\begin{equation}
    \HodgeH\big(\epsvec[-1]{[n]};\epsvec{[5]};u\big)
    =
    \vertex\big(\epsvec{[5]};u\big) \,
    \prod_{j=1}^n
        \frac{1}{u^2} \prod_{i \ne j} \frac{\gamfct{-\epsilon_j^{-1}\epsilon_i}{u\epsilon_j}}{\epsilon_i} \,,
\end{equation}
which is precisely the statement of \cref{thm: 5 Hodge n point formula} for $n > 1$. This completes the proof. \qed

\section{Constant maps}
\label{sec: const maps main part}

Recall that \cref{conj:index} predicts that the constant-map contribution to the equivariant Gromov--Witten theory of a Calabi--Yau fivefold is governed by the corresponding supergravity index. The main result of this section is the proof of \cref{thm: five hodge no psi}, namely that \cref{conj:index} is equivalent to the closed formula \eqref{eq: 5 lambda 0 psi formula} for unpointed quintuple Hodge integrals. The proof is based on the asymptotic expansion of the supergravity index after the change of variables $q_i=\re^{u\epsilon_i}$, which is then compared with the conjectural Hodge series.

We first derive the asymptotic expansion of the supergravity index of affine five-space by Mellin transform methods. We then use this expansion to prove the equivalence with the unpointed Hodge formula and finally explain how the latter leads, via Hirzebruch--Riemann--Roch, to a formula for constant-map contributions on a general Calabi--Yau fivefold.

\subsection{Asymptotics of the supergravity index}
We begin with a discussion of the local model $\CYf=\Aaff{5}$ equipped with an action of a torus $\mgp{T}$ with tangent weights $\epsilon_1,\ldots,\epsilon_5$, satisfying the Calabi--Yau condition $\sum_{i=1}^5 \epsilon_i=0$. The supergravity index 
appearing in \cref{conj:index} reads
\begin{equation}
    \label{eq: sugra index local model}
    \Exp\Big( \chi_{\mgp{T}}\big(
        \Aaff{5},
        T_{\Aaff{5}}^\vee -T_{\Aaff{5}}
    \big) \Big) \,.
\end{equation}
Here $\Exp$ denotes the plethystic exponential and the $\mgp{T}$-equivariant Euler characteristic in its argument is defined as a K-theoretic residue. Thus, writing $q_i=\re^{u\epsilon_i}$ for the irreducible characters of $T_{\Aaff{5}}|_0$, the index equates to
\begin{equation}
    \Exp\left(
        - \frac{\sum_{i=1}^5 (q_i-q_i^{-1})}{\prod_{i=1}^5 (q_i^{1/2}-q_i^{-1/2})}
    \right)\,,
\end{equation}
where we used the Calabi--Yau condition $\prod_{i=1}^5 q_i=1$.

Our goal is to determine the asymptotic expansion of this quantity as $u\to 0$. This is motivated by \cref{conj:index} which claims that the doubled generating series of disconnected constant maps $\exp( 2\GWargs{\CYf}{\mgp{T}}{0})$ coincides with the positive-power part of this expansion. We will in fact compute the non-positive terms as well, since they are of independent physical interest.

\begin{prop}
    \label{prop: sugra index asymptotics}
    Suppose that $\epsilon_1,\ldots,\epsilon_5$ are generic equivariant parameters satisfying the Calabi--Yau condition $\sum_i \epsilon_i = 0$.
    Then, as $u\to 0$, one has the asymptotic expansion
    \begin{multline}
        \label{eq: sugra asymptotic expansion}
        \log \Exp\left(
            - \frac{\sum_{i=1}^5 (q_i-q_i^{-1})}{\prod_{i=1}^5 (q_i^{1/2}-q_i^{-1/2})}
        \right) \\
        \sim
        - \zeta(3)\,\frac{p_3}{3 e_5}\frac{1}{u^2}
        + \frac{1}{12}\log(u)
        - c
        +
        \sum_{g > 1} \frac{B_{2g-2}}{2g-2}
        \left[
            \frac{\sum_{i=1}^5 \cs(2u\epsilon_i)}{\prod_{i=1}^5 \cs(u\epsilon_i)}
        \right]_{2g-2}
        u^{2g-2} \,,
    \end{multline}
    where $q_i=\re^{u\epsilon_i}$, $p_3=\sum_{i=1}^5\epsilon_i^3$, $e_5=\prod_{i=1}^5\epsilon_i$, and
    \begin{equation}
        c
        =
        \log \prod_{i=1}^5
        \frac{
        	\Gamma_4(-\epsilon_i;\epsvec{\hat{\imath}})
        }{
        	\rho_4(\epsvec{\hat{\imath}})
        } \,.
    \end{equation}
    Here $\epsvec{\hat{\imath}}=(\epsilon_1,\ldots,\widehat{\epsilon_i},\ldots,\epsilon_5)$, and $\Gamma_4$, respectively $\rho_4$, denote the $4$-fold Barnes Gamma function and the corresponding modular constant. (See \eqref{eq: Barnes zeta} and \eqref{eq: Barnes Gamma and modular constant}.)
\end{prop}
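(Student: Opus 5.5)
The expansion will come from a Mellin transform analysis of the logarithm of the plethystic exponential. First, expand the argument as a sum over characters. Writing $q^{a}$ for $\prod_i q_i^{a_i}$, the rational function
\[
f(q_1,\ldots,q_5) \coloneqq -\frac{\sum_{i=1}^5 (q_i-q_i^{-1})}{\prod_{i=1}^5 (q_i^{1/2}-q_i^{-1/2})}
\]
is, after choosing a chamber for the weights, a finite sum of terms of the form $\pm q^{w}/\prod_{k}(1-q^{v_k})$ with four-element sets $\{v_k\}$ drawn from the $\epsilon$'s. Concretely, for each $i$ the term $q_i-q_i^{-1}$ cancels against the factor $q_i^{1/2}-q_i^{-1/2}$ up to $(q_i^{1/2}+q_i^{-1/2})$, leaving a denominator in the remaining four variables $\epsvec{\hat\imath}$. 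Then
\[
\log\Exp(f)=\sum_{m\geq 1}\frac{1}{m}f(q^m),
\]
and with $q_i=\re^{u\epsilon_i}$ each term becomes a lattice sum of exponentials $\re^{-mu\,\ell}$ over a cone in $\bZ_{\geq 0}^4$ shifted by $\pm\epsilon_i/2$ or similar. I would reorganize it so that the $i$-th piece is a Barnes-type sum in the four weights $\epsvec{\hat\imath}$ with shift $-\epsilon_i$.

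Second, take the Mellin transform in $u$:
\[
\int_0^\infty u^{s-1}\log\Exp(f)\,du=\Gamma(s)\,\zeta(s+1)\sum_{i}\pm\zeta_4(s;-\epsilon_i\,|\,\epsvec{\hat\imath}),
\]
with $\zeta_4$ the Barnes zeta function \eqref{eq: Barnes zeta}. The asymptotic expansion as $u\to0$ is then read off from the poles of this product to the left of the fundamental strip.
\begin{itemize}
\item The poles of $\zeta_4$ at $s=4,3,2,1$ combine with $\zeta(s+1)$. By Calabi--Yau cancellation only the $s=2$ pole survives, producing $\zeta(3)u^{-2}$ with coefficient $-p_3/(3e_5)$, computed from the residues of Barnes zeta, which are Bernoulli-type polynomials in the shift.
\item The double pole at $s=0$, from $\Gamma(s)\zeta(s+1)$ times $\zeta_4(0)$, gives the $\log u$ term. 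Its coefficient $\sum_i\pm\zeta_4(0;\ldots)$ should evaluate to $1/12$.
\item The constant term involves $\zeta_4'(0)=\log(\Gamma_4/\rho_4)$, giving $-c$.
\item The poles at negative even integers $s=2-2g$, from $\Gamma(s)$, pair $\zeta(3-2g)=-B_{2g-2}/(2g-2)$ with $\zeta_4(2-2g)$, which equals Taylor coefficients of the generating function $\sum_i \cs(2u\epsilon_i)/\prod_j\cs(u\epsilon_j)$. The vanishing of odd-order terms follows from the parity of that function.
\end{itemize}

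For the Taylor-coefficient step I would instead use the simpler route of expanding $f(\re^{mu\epsilon})$ in Laurent series in $mu$: the Laurent coefficients give the $\zeta(1-k)$ contributions directly via the standard Zagier-type asymptotic $\sum_m \frac1m h(mu)\sim \ldots+\sum_k h_k\zeta(1-k)u^k$. This makes the positive-power tail transparent, and one notes that $f$ itself equals $\sum_i\cs(2u\epsilon_i)/\prod\cs(u\epsilon_i)$ up to sign.

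The main obstacle is the singular and constant terms. These are the $\zeta(3)/u^2$, $\log u$ and $c$ terms, which need care with the $m$-summation of the polar part $h_{-5},\ldots,h_0$ of the Laurent expansion; the $\log u$ coefficient comes from $h_0$ together with $\zeta(1)$-type regularization. I would handle them by the Mellin representation above, computing the residues of Barnes zeta explicitly and using $\sum\epsilon_i=0$ to kill the poles at $s=4,3,1$ (via $p_1=0$ and the odd symmetry of $f$). The constant term is identified with Barnes $\Gamma_4$ by its definition through $\zeta_4'(0)$.
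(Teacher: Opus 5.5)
Your overall architecture matches the paper's. The $m$-sum produces $\zeta(s+1)$. The expansion is read off from the residues at $s=2$, at the double pole $s=0$, and at $s=2-2g$. Your ``Zagier-type'' treatment of the tail is how the paper handles the Mellin transform $\Phi$ of the full function $F(t)=\sum_i\cs(2t\epsilon_i)/\prod_i\cs(t\epsilon_i)=-f$.

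The gap is in the Barnes step, which is exactly what you rely on for the singular and constant terms. Using $\prod_i q_i=1$, the $i$-th summand of $F$ is $(1+q_i)/\prod_{j\neq i}(1-q_j^{-1})$. So each $i$ contributes \emph{two} lattice sums: one with shift $-\epsilon_i$, and a homogeneous one with shift $0$. Your formula $\Gamma(s)\zeta(s+1)\sum_i\pm\zeta_4(s;-\epsilon_i\,|\,\epsvec{\hat{\imath}})$ keeps only the first.

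This omission is not harmless. Set $G(t)=\sum_i\prod_{j\neq i}(1-\re^{-t\epsilon_j})^{-1}$ and $H(t)=\sum_i\re^{t\epsilon_i}\prod_{j\neq i}(1-\re^{-t\epsilon_j})^{-1}$. The Calabi--Yau condition gives $H(t)=G(-t)$, hence $F(t)=H(t)+H(-t)$. So $H$ alone has neither the parity of $F$ nor its even coefficients:
\begin{itemize}
\item $[t^{-3}]H=p_2/(2e_5)\neq 0$, so a spurious $u^{-3}$ term appears;
\item $[t^{-2}]H=p_3/(6e_5)$, so the $\zeta(3)u^{-2}$ coefficient is halved;
\item $\sum_i\zeta_4(0,-\epsilon_i;\epsvec{\hat{\imath}})=[t^0]H=\tfrac{1}{24}$, not $\tfrac{1}{12}$.
\end{itemize}

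Likewise your constant misses $\sum_i\zeta_{4,h}'(0;\epsvec{\hat{\imath}})=-\sum_i\log\rho_4(\epsvec{\hat{\imath}})$. In the normalization \eqref{eq: Barnes Gamma and modular constant} used in the statement, $\zeta_4'(0,-\epsilon_i;\epsvec{\hat{\imath}})=\log\Gamma_4(-\epsilon_i;\epsvec{\hat{\imath}})$, and $\rho_4$ enters only through the homogeneous zeta. Writing $\zeta_4'(0)=\log(\Gamma_4/\rho_4)$ therefore hides the missing terms rather than supplying them.

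The repair is the paper's organization. First Mellin-transform the full $F$, so the poles $s=2-2g$ and their residues (the Laurent coefficients $a_g$ of $F$) come with no cancellations to check. Then use the decomposition only for the finite part, via $\Phi(s)=\Gamma(s)\Psi(s)$ with $\Psi(s)=\sum_i\bigl(\zeta_{4,h}(s;\epsvec{\hat{\imath}})+\zeta_4(s,-\epsilon_i;\epsvec{\hat{\imath}})\bigr)$. The Euler constants from $\Gamma(s)$ and $\zeta(s+1)$ then cancel, and $c=\Psi'(0)$.

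The chamber also cannot be left unspecified. For real $u>0$, the sum $\sum_m\frac{1}{m}F(mu)$ and its Mellin transform exist only when $F(t)\to 0$ as $t\to+\infty$. This means $\max_i|\epsilon_i|<\frac{1}{2}\sum_i|\epsilon_i|$, i.e.\ the signs split $2+3$. If one weight has sign opposite to the other four, e.g.\ $\epsilon=(-4,1,1,1,1)$, then $F(t)\to 1$ and the $m$-sum diverges. The expansion has to be proved in that chamber and its coefficients then extended to generic weights, as the paper does.

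In this chamber every $\epsvec{\hat{\imath}}$ contains weights of both signs, so none of the ten pieces is a Barnes sum with positive periods. If your ``$\pm$'' means flipping factors via $1/(1-\re^{-t\epsilon_j})=-\re^{t\epsilon_j}/(1-\re^{t\epsilon_j})$, you obtain Barnes functions with periods $|\epsilon_j|$ and chamber-dependent shifts. Matching the result to $\Gamma_4(-\epsilon_i;\epsvec{\hat{\imath}})$ and $\rho_4(\epsvec{\hat{\imath}})$ at the original periods is then an additional step. The paper handles it by continuing each piece from its own convergence chamber.
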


\Cref{conj:index} asserts that the positive-power part of this asymptotic expansion equates to twice the generating series of constant maps to affine five-space. Thus, \cref{prop: sugra index asymptotics} provides the following equivalent characterization of the \namecref{conj:index}.

\begin{cor}
    \label{cor: const maps loc model}
    For $\CYf=\Aaff{5}$, \cref{conj:index} is equivalent to the identity
    \begin{equation}
        \label{eq: 5 lambda 0 psi local model}
        \GWargs{\Aaff{5}}{\mgp{T}}{0}
        =
        \frac{1}{2}
        \sum_{g>1}
            \frac{B_{2g-2}}{2g-2}
            \left[
                \frac{\sum_{i=1}^5 \cs(2u\epsilon_i)}{\prod_{i=1}^5 \cs(u\epsilon_i)}
            \right]_{2g-2}
            u^{2g-2} \,,
    \end{equation}
    where $\sum_{i=1}^5\epsilon_i=0$. \qed
\end{cor}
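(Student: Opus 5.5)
The plan is to read the corollary off directly from \cref{prop: sugra index asymptotics}, by matching the $\mathrm{o}(1)$ tails on the two sides of \eqref{eq: const maps sugra} for $\CYf=\Aaff{5}$.

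\textbf{Step 1: the right-hand side.} First I make the right-hand side of \eqref{eq: const maps sugra} explicit for $\Aaff{5}$. As explained before the proposition, $\chi_{\mgp{T}}(\Aaff{5},T^\vee-T)$ is a $K$-theoretic residue at the origin. Under $q_i=\re^{u\epsilon_i}$ and the Calabi--Yau condition $\prod q_i=1$, its plethystic exponential is precisely the expression whose logarithm is expanded in \eqref{eq: sugra asymptotic expansion}. Its logarithm has the following terms:
\begin{itemize}
\item a $u^{-2}$ term;
\item a $\log u$ term;
\item a constant $-c$;
\item the series $\sum_{g>1}$, which is the only part of order $\mathrm{o}(1)$.
\end{itemize}
So the $\mathrm{o}(1)$ tail of the right-hand side is exactly the displayed Bernoulli series.

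\textbf{Step 2: the left-hand side.} Here $\log\exp(2\GWargs{\Aaff{5}}{\mgp{T}}{0})=2\GWargs{\Aaff{5}}{\mgp{T}}{0}$ exactly, so I need to identify which part of this series is $\mathrm{o}(1)$. By definition, the degree-zero series sums over $g$ the integrals over $[\Mbar_g(\Aaff{5},0)]^{\vir}_{\mgp{T}}$, with $\Mbar_g(\Aaff{5},0)=\Mbar_g\times\Aaff{5}$. These are stable only for $g\ge 2$: the unpointed genus $0$ and $1$ spaces are empty/unstable, so there are no $g=0,1$ terms. By localization to the origin, the genus-$g$ term is the unpointed quintuple Hodge integral $u^{2g-2}\int_{\Mbar_g}\prod_i\HodgeLambda[g]{\epsilon_i}$, homogeneous of weight $u^{2g-2}$ with $2g-2>0$. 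Hence $2\GWargs{\Aaff{5}}{\mgp{T}}{0}$ equals its own $\mathrm{o}(1)$ tail, with no constant, logarithmic or negative-power part.

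\textbf{Step 3: comparison.} The relation $\sim$ in \eqref{eq: const maps sugra} therefore says precisely that $2\GWargs{\Aaff{5}}{\mgp{T}}{0}$ equals the $\sum_{g>1}$ series of \eqref{eq: sugra asymptotic expansion}. Dividing by $2$ gives \eqref{eq: 5 lambda 0 psi local model}. Both directions of the equivalence follow, since each step above is an equality.

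The only point requiring care is the genus bookkeeping in Step 2: the conventions must exclude unstable $g\le1$ constant-map contributions, so that no constant term in $u$ survives on the Gromov--Witten side. Otherwise such a term would have to be compared with $-c$, which the $\mathrm{o}(1)$ convention deliberately ignores. All the analytic content lies in \cref{prop: sugra index asymptotics}, which I take as given.
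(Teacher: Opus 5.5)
Your proposal is correct and follows the paper's route: the corollary is read off from \cref{prop: sugra index asymptotics}. On the index side only the $\sum_{g>1}$ series is $\mathrm{o}(1)$, while $2\,\GWargs{\Aaff{5}}{\mgp{T}}{0}$ consists solely of powers $u^{2g-2}$ with $g\geq 2$, so matching tails is equivalent to the displayed identity. Your explicit check that there are no $g=0,1$ constant-map terms is the point the paper leaves implicit here and makes explicit only in its subsequent identification of $\GWargs{\Aaff{5}}{\mgp{T}}{0}$ with $\HodgeH(~;\epsilon_1,\ldots,\epsilon_5;u)$.
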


Before proving \cref{prop: sugra index asymptotics}, which is based on an adaptation of the Mellin-transform method \cite[\S4.1.1]{PK01:Asymp}, let us first explain its structure. The asymptotic expansion is first established in a real convergence chamber and then extended meromorphically to generic equivariant parameters. The first three terms in \eqref{eq: sugra asymptotic expansion} arise from the poles of the Mellin transform $\Phi(s)$ at $s=2$ and $s=0$, corresponding to the unstable topologies $g=0$ and $g=1$, respectively. The remaining series, which is the positive-power part at $u=0$, is governed by the poles at the negative even integers and therefore corresponds to the stable topologies $g>1$. Its coefficients are given by the residues of the Mellin transform at these poles, hence are meromorphic in the equivariant parameters and, in particular, independent of the choice of convergence chamber after analytic continuation. This positive part is exactly of the form predicted by \cref{eq: 5 lambda 0 psi formula}, and it will match the generating series of unpointed quintuple Hodge integrals.

\begin{proof}
	We first establish the asymptotic expansion in the real chamber
	\begin{equation}
	    \label{eq:chamber}
	    \epsilon_1,\ldots,\epsilon_5\in\bR^{*} \,,
	    \qquad
	    \sum_{i=1}^5 \epsilon_i=0 \,,
	    \qquad
	    \max_i |\epsilon_i|
	    <
	    \frac12 \sum_{i=1}^5 |\epsilon_i| \,,
	    \qquad
	    u \to 0^+ \,,
	\end{equation}
	and then extend its coefficients meromorphically to generic equivariant parameters. Set
	\begin{equation}
	    \chi(u)
	    \coloneqq
	    \log \Exp\left(
	        - \frac{\sum_{i=1}^5 (q_i-q_i^{-1})}{\prod_{i=1}^5 (q_i^{1/2}-q_i^{-1/2})}
	    \right)
	    =
	    - \sum_{m \ge 1} \frac{1}{m} F(mu) \,,
	    \qquad
	    F(t) \coloneqq \frac{\sum_{i=1}^5 \cs(2t\epsilon_i)}{\prod_{i=1}^5 \cs(t\epsilon_i)} \,.
	\end{equation}
	Since $\sum_{i} \epsilon_i=0$, the function $F$ is even and meromorphic in $t$, with at worst a double pole at $t=0$. Its Laurent expansion is of the form
	\begin{equation}
	    F(t)=\frac{a_0}{t^2}+a_1+\sum_{g>1} a_g\,t^{2g-2} \,,
	\end{equation}
	where
	\begin{equation}
	    a_0=\frac{p_3}{3e_5} \,,
	    \qquad
	    a_1=\frac{1}{12} \,,
	    \qquad
	    a_g=
	    \left[
	        \frac{\sum_{i=1}^5 \cs(2t\epsilon_i)}{\prod_{i=1}^5 \cs(t\epsilon_i)}
	    \right]_{2g-2}
	    \quad\text{for }g>1 \,.
	\end{equation}
	The chamber assumption \eqref{eq:chamber} implies exponential decay of $F$ along the positive real axis. Indeed, as $t \to + \infty$ one has $|\cs(t\epsilon_i)| \asymp \re^{t|\epsilon_i|/2}$ and $|\cs(2t\epsilon_i)| \asymp \re^{t|\epsilon_i|}$, so that
	\begin{equation}
	    |F(t)|
	    \ll
	    \exp\left(
	        t\max_i |\epsilon_i| - \frac{t}{2}\sum_{i=1}^5 |\epsilon_i|
	    \right) \,.
	\end{equation}
	By assumption, $\delta \coloneqq \frac{1}{2} \sum_{i} |\epsilon_i|-\max_i |\epsilon_i| >0$, and therefore $F(t)=\mathrm{O}(\re^{-\delta t})$ as $t\to+\infty$. In particular, the series defining $\chi(u)$ converges absolutely for every sufficiently small $u>0$, and the Mellin transform
	\begin{equation}
	    \Phi(s)
	    \coloneqq
	    \int_0^\infty t^{s-1}F(t)\,dt \,,
	\end{equation}
	is convergent for $\Re(s)>2$.
    By Lemma~3.3 and Equation~(3.2.4) of \cite{PK01:Asymp}, the Laurent expansion of $F(t)$ at $t=0$ yields a meromorphic continuation of $\Phi(s)$ to $\bC$, with simple poles at $s=2-2g$ for $g\ge 0$ and residue $a_g$. 
	Since $\Phi(s)$ is initially defined for $\Re(s)>2$, Mellin inversion gives, for any $c>2$,
	\begin{equation}
	    F(mu)
	    =
	    \frac{1}{2\pi \ri}
	    \int_{c-\ri\infty}^{c+\ri\infty}
	    (mu)^{-s}\Phi(s)\,ds \,.
	\end{equation}
	Substituting this into the definition of $\chi(u)$ and interchanging summation and integration, which is justified by absolute convergence,
    we obtain
	\begin{equation}
	    \chi(u)
	    =
	    - \frac{1}{2\pi \ri}
	    \int_{c-\ri\infty}^{c+\ri\infty}
	    u^{-s}\zeta(s+1)\Phi(s)\,ds \,.
	\end{equation}
    We now shift the contour to the left. After subtracting finitely many terms of the asymptotic expansion of $F(t)$ at $t=0$, repeated integration by parts in the corresponding Mellin transform gives rapid decay of $\Phi(s)$ in vertical strips, away from its poles. Together with the standard polynomial growth bounds for $\zeta(s+1)$, this implies that the horizontal contributions vanish. The asymptotic expansion of $\chi(u)$ is therefore obtained from the residues of $u^{-s}\zeta(s+1)\Phi(s)$ at the poles crossed during the shift. The pole at $s=2$ contributes
	\begin{equation}
	    -\zeta(3)a_0\,u^{-2}
	    =
	    -\zeta(3)\frac{p_3}{3e_5}\frac{1}{u^2} \,.
	\end{equation}
	At $s=0$, both $\zeta(s+1)$ and $\Phi(s)$ have simple poles. Writing
	\begin{equation}
	    u^{-s}=1-s\log(u)+\mathrm{O}(s^2) \,,
	    \quad
	    \zeta(s+1)=\frac{1}{s}+\gamma+\mathrm{O}(s) \,,
	    \quad
	    \Phi(s)=\frac{a_1}{s}+\Phi_0+\mathrm{O}(s) \,,
	\end{equation}
	one finds that the residue at $s=0$ is
	\begin{equation}
	    a_1(\log(u)-\gamma)-\Phi_0
	    =
	    \frac{1}{12}\log(u)-\left(
	    	\Phi_0+\frac{\gamma}{12}
	    \right) \,.
	\end{equation}
	Finally, for $g>1$, the pole at $s=2-2g$ contributes
	\begin{equation}
	    -\zeta(3-2g)a_g\,u^{2g-2}
	    =
	    \frac{B_{2g-2}}{2g-2}
	    \left[
	        \frac{\sum_{i=1}^5 \cs(2\epsilon_i)}{\prod_{i=1}^5 \cs(\epsilon_i)}
	    \right]_{2g-2}
	    u^{2g-2} \,.
	\end{equation}
	Hence
	\begin{equation}
	    \chi(u)
	    \sim
	    - \frac{p_3 \zeta(3)}{3 e_5}\frac{1}{u^2}
	    + \frac{1}{12}\log(u)
	    - \left(\Phi_0+\frac{\gamma}{12}\right)
	    +
	    \sum_{g > 1} \frac{B_{2g-2}}{2g-2}
	    \left[
	        \frac{\sum_{i=1}^5 \cs(2\epsilon_i)}{\prod_{i=1}^5 \cs(\epsilon_i)}
	    \right]_{2g-2}
	    u^{2g-2} \,.
	\end{equation}
    This proves \eqref{eq: sugra asymptotic expansion} up to the explicit identification of the constant $c=\Phi_0+\frac{\gamma}{12}$, where $\Phi_0$ is the finite part of the Mellin transform $\Phi$ at $s=0$.
    
    To compute $\Phi_0$, we decompose $F$ into five terms whose Mellin transforms can be expressed in terms of Barnes zeta functions. Since the individual terms do not decay at both $0$ and $+\infty$, we first regularize their Mellin transforms by subtracting the relevant asymptotic terms. After summing over the five contributions, the subtraction terms recombine precisely into the meromorphic continuation of $\Phi$. Expanding the resulting Barnes-zeta expression at $s=0$ will then identify $\Phi_0$, and hence $c$. More precisely, the Calabi--Yau condition gives
	\begin{equation}
	    F(t)
	    =
	    \frac{\sum_{i=1}^5(\re^{t\epsilon_i}-\re^{-t\epsilon_i})}{\prod_{i=1}^5(1-\re^{-t\epsilon_i})}
	    =
	    \sum_{i=1}^5 \frac{1+\re^{t\epsilon_i}}{\prod_{j\neq i}(1-\re^{-t\epsilon_j})} \,.
	\end{equation}
	For each $i$, set
	\begin{equation}
	    \Pi_i(t)\coloneqq \prod_{j\neq i}(1-\re^{-t\epsilon_j}),
	    \qquad
	    f_i(t)\coloneqq \frac{1+\re^{t\epsilon_i}}{\Pi_i(t)},
	    \qquad
	    \epsvec{\hat{\imath}}\coloneqq (\epsilon_1,\ldots,\widehat{\epsilon_i},\ldots,\epsilon_5) \,.
	\end{equation}
    Then $F(t)=\sum_{i=1}^5 f_i(t)$. Moreover, as $t\to0$, we have $f_i(t)=\sum_{m=-4}^0 b_{i,m}t^m+\mathrm{O}(t)$ for suitable coefficients $b_{i,m}$. To identify the Mellin transform of each $f_i$ with Barnes zeta functions, we temporarily work in a convergence chamber for the corresponding Barnes integrals, where $f_i(t)\to 1$ as $t\to+\infty$, and then extend the resulting identities meromorphically in the equivariant parameters. We therefore define the endpoint-subtracted Mellin transform of $f_i$ by
	\begin{equation}
	    \varphi_i(s)
	    \coloneqq
	    \int_0^1
	    t^{s-1}
	    \left(
	        f_i(t)-\sum_{m=-4}^0 b_{i,m}t^m
	    \right)\,dt
	    +
	    \sum_{m=-4}^0 \frac{b_{i,m}}{s+m}
	    +
	    \int_1^\infty
	    t^{s-1}\bigl(f_i(t)-1\bigr)\,dt
	    -\frac{1}{s} \,.
	\end{equation}
	Next, define the ordinary and homogeneous Barnes zeta functions by
	\begin{equation}
        \label{eq: Barnes zeta}
	    \zeta_4(s,-\epsilon_i;\epsvec{\hat{\imath}})
	    \coloneqq
	    \frac{1}{\Gamma(s)}
	    \int_0^\infty
	    t^{s-1}
	    \frac{\re^{t\epsilon_i}}{\Pi_i(t)}\,dt,
	    \quad
	    \zeta_{4,h}(s;\epsvec{\hat{\imath}})
	    \coloneqq
	    \frac{1}{\Gamma(s)}
	    \int_0^\infty
	    t^{s-1}
	    \left(
	        \frac{1}{\Pi_i(t)}-1
	    \right)\,dt \,,
	\end{equation}
	whenever the integrals converge, with meromorphic continuation elsewhere. Since $f_i(t)-1 = ( \frac{1}{\Pi_i(t)}-1 ) + \frac{\re^{t\epsilon_i}}{\Pi_i(t)}$, we obtain, initially in a convergence chamber for the corresponding Barnes integrals and hence, by meromorphic continuation in the equivariant parameters,
	\begin{multline}
	    \varphi_i(s)
	    =
	    \Gamma(s)\Bigl(
	        \zeta_{4,h}(s;\epsvec{\hat{\imath}})
	        +
	        \zeta_4(s,-\epsilon_i;\epsvec{\hat{\imath}})
	    \Bigr) \\
	    +
	    \int_0^1
	    t^{s-1}
	    \left(
	        1-\sum_{m=-4}^0 b_{i,m}t^m
	    \right)\,dt
	    +
	    \sum_{m=-4}^0 \frac{b_{i,m}}{s+m}
	    -\frac{1}{s} \,.
	\end{multline}
	But
	\begin{equation}
	    \int_0^1
	    t^{s-1}
	    \left(
	        1-\sum_{m=-4}^0 b_{i,m}t^m
	    \right)\,dt
	    =
	    \frac{1}{s}
	    -
	    \sum_{m=-4}^0 \frac{b_{i,m}}{s+m} \,,
	\end{equation}
	so the correction terms cancel identically, and $\varphi_i(s) = \Gamma(s)( \zeta_{4,h}(s;\epsvec{\hat{\imath}}) + \zeta_4(s,-\epsilon_i;\epsvec{\hat{\imath}}) )$ as meromorphic functions of $s$.
    
    By meromorphic continuation in the equivariant parameters, we may now return to the chamber \eqref{eq:chamber}. In this chamber, each $f_i(t)$ decays exponentially as $t\to+\infty$, so the endpoint-subtracted expression defining $\varphi_i(s)$ is convergent for $-1<\Re(s)<0$. Since $F(t)=\sum_{i=1}^5 f_i(t)$, one finds
	\begin{multline}
	    \sum_{i=1}^5 \varphi_i(s)
	    =
	    \int_0^1
	    t^{s-1}
	    \left(
	        F(t)-\sum_{i=1}^5\sum_{m=-4}^0 b_{i,m}t^m
	    \right)\,dt \\
	    +
	    \sum_{i=1}^5\sum_{m=-4}^0 \frac{b_{i,m}}{s+m}
	    +
	    \int_1^\infty t^{s-1}\bigl(F(t)-5\bigr)\,dt
	    -\frac{5}{s} \,.
	\end{multline}
	By comparison with the Laurent expansion of $F$ at $t=0$, the coefficients satisfy
	\begin{equation}
	    \sum_{i=1}^5 b_{i,-4}
	    =
	    \sum_{i=1}^5 b_{i,-3}
	    =
	    \sum_{i=1}^5 b_{i,-1}
	    =0,
	    \qquad
	    \sum_{i=1}^5 b_{i,-2}=a_0,
	    \qquad
	    \sum_{i=1}^5 b_{i,0}=a_1 \,.
	\end{equation}
	Hence
	\begin{equation}
	    \sum_{i=1}^5 \varphi_i(s)
	    =
	    \int_0^1
	    t^{s-1}
	    \bigl(F(t)-a_0t^{-2}-a_1\bigr)\,dt
	    +
	    \frac{a_0}{s-2}
	    +
	    \frac{a_1}{s}
	    +
	    \int_1^\infty t^{s-1}\bigl(F(t)-5\bigr)\,dt
	    -\frac{5}{s} \,.
	\end{equation}
	For $-1<\Re(s)<0$, the last two terms combine to
	\begin{equation}
	    \int_1^\infty t^{s-1}\bigl(F(t)-5\bigr)\,dt
	    -\frac{5}{s}
	    =
	    \int_1^\infty t^{s-1}F(t)\,dt \,,
	\end{equation}
	since $\int_1^\infty t^{s-1}\,dt=-\frac{1}{s}$ in this strip. Therefore, on the non-empty strip $-1<\Re(s)<0$, one has
	\begin{equation}
	    \sum_{i=1}^5 \varphi_i(s)
	    =
	    \frac{a_0}{s-2}
	    +
	    \frac{a_1}{s}
	    +
	    \int_0^1
	    t^{s-1}\bigl(F(t)-a_0t^{-2}-a_1\bigr)\,dt
	    +
	    \int_1^\infty t^{s-1}F(t)\,dt \,.
	\end{equation}
	By the standard endpoint-subtracted Mellin continuation \cite[Eq.~(3.2.4)]{PK01:Asymp}, the right-hand side is precisely the meromorphic continuation of $\Phi(s)$. Hence $\sum_{i=1}^5 \varphi_i(s)=\Phi(s)$. Combining this with the previous identity for $\varphi_i(s)$ yields
	\begin{equation}
	    \Phi(s)=\Gamma(s)\,\Psi(s),
	    \qquad
	    \Psi(s)
	    \coloneqq
	    \sum_{i=1}^5
	    \Bigl(
	        \zeta_{4,h}(s;\epsvec{\hat{\imath}})
	        +
	        \zeta_4(s,-\epsilon_i;\epsvec{\hat{\imath}})
	    \Bigr) \,.
	\end{equation}
	Expanding at $s=0$, we have $\Gamma(s)=\frac{1}{s}-\gamma+\mathrm{O}(s)$, $\Psi(s)=\Psi(0)+\Psi'(0)s+\mathrm{O}(s^2)$,
	hence
	\begin{equation}
	    \Phi(s)
	    =
	    \frac{\Psi(0)}{s}
	    +
	    \bigl(\Psi'(0)-\gamma\Psi(0)\bigr)
	    +
	    \mathrm{O}(s) \,.
	\end{equation}
	Comparing with $\Phi(s)=\frac{1}{12}\frac{1}{s}+\Phi_0+\mathrm{O}(s)$, we obtain $\Psi(0)=\frac{1}{12}$ and $\Phi_0=\Psi'(0)-\frac{\gamma}{12}$. Therefore
	\begin{equation}
	    \Phi_0+\frac{\gamma}{12}
	    =
        \Psi'(0)
        =
	    \sum_{i=1}^5
	    \Bigl(
	        \zeta_{4,h}'(0;\epsvec{\hat{\imath}})
	        +
	        \zeta_4'(0,-\epsilon_i;\epsvec{\hat{\imath}})
	    \Bigr) \,.
	\end{equation}
	If one further identifies the ordinary and homogeneous Barnes derivatives with the Gamma function and the modular constant respectively, namely,
	\begin{equation}
        \label{eq: Barnes Gamma and modular constant}
		\zeta_4'(0,-\epsilon_i;\epsvec{\hat{\imath}})
		=
		\log \Gamma_4(-\epsilon_i;\epsvec{\hat{\imath}}),
		\qquad
		\zeta_{4,h}'(0;\epsvec{\hat{\imath}})
		=
		-\log \rho_4(\epsvec{\hat{\imath}}) \,,
	\end{equation}
	we obtain the claim. In any case, the right-hand side is meromorphic in the equivariant parameters away from the resonance hyperplanes, so the coefficients of the asymptotic expansion extend meromorphically to generic Calabi--Yau weights.
\end{proof}

\begin{rmk}
    One might be tempted to specialize \cref{eq: sugra asymptotic expansion} to a non-generic locus such as $\epsilon_5=-\epsilon_4=\epsilon$, for which the logarithm of the index reduces to the logarithm of the MacMahon function. Indeed, in this case
    \begin{equation}
        \frac{\sum_{i=1}^5 \cs(2t\epsilon_i)}{\prod_{i=1}^5 \cs(t\epsilon_i)}
        =
        -\frac{1}{\cs(t\epsilon)^2} \,,
    \end{equation}
    so that
    \begin{equation}
        \chi(u)
        =
        \sum_{m\ge 1}\frac{1}{m\,\cs(mu\epsilon)^2}
        =
        \log M(\re^{-u\epsilon}) \,,
    \end{equation}
    where $M(q)=\prod_{n\ge 1}(1-q^n)^{-n}$ is the MacMahon function. However, this specialization lies on a resonance hyperplane, so it is not covered by the generic asymptotic formula above. In particular, the constant term is different: using the classical asymptotic expansion of the MacMahon function, one finds
    \begin{equation}
        \log M(\re^{-\epsilon})
        \sim
        \frac{\zeta(3)}{\epsilon^2}
        +\frac{1}{12}\log \epsilon
        +\zeta'(-1)
        +\sum_{g\ge 2}
        \frac{B_{2g}}{2g} \frac{B_{2g-2}}{2g-2} \,
        \frac{\epsilon^{2g-2}}{(2g-2)!} \,.
    \end{equation}
    Thus, in this resonant specialization, the constant term is $c=-\zeta'(-1)$. The same asymptotic can be derived using the Mellin transform method, see \cite{Pioline}.
\end{rmk}

\subsection{Unpointed Hodge integrals}
In the case of affine five-space we have already reformulated \cref{conj:index} as a concrete conjectural formula for the generating series of constant maps. We will now relate the latter to Hodge integrals.

The moduli space of constant maps to a Calabi--Yau fivefold $\CYf$ is $\Mbar_g(\CYf,0)= \Mbar_g\times\CYf$. The link to Hodge integrals is established by the fact that this moduli space carries the non-trivial obstruction bundle $T_{\CYf}\boxtimes \bE^\vee$. This means its virtual fundamental class is
\begin{equation}
    [\Mbar_g(\CYf,0)]_{\mgp{T}}^{\vir}
    =
    e(T_{\CYf}\boxtimes \bE^\vee )\cap  [\Mbar_g \times \CYf]_{\mgp{T}} \,.
\end{equation}
Indeed, obstructions to deforming a constant map $f\colon C\to \mr{pt}\subset \CYf$ are encoded in
\begin{equation}
    \hhh^1(C,f^*T_\CYf)
    =
    T_\CYf|_{\mr{pt}} \otimes \hhh^1(C,\cO_C)
    =
    T_\CYf|_{\mr{pt}} \otimes \hhh^0(C,\omega_C)^\vee  \,,
\end{equation}
where the last expression is precisely the fiber of $T_\CYf \boxtimes \bE^\vee$ at the constant map $f$.

Concretely, for affine five-space this yields the identification
\begin{equation}
    \GWargs{\Aaff{5}}{\mgp{T}}{0}
    =
    \sum_{g>1} u^{2g-2}
        \int_{\Mbar_g} \prod_{i=1}^5 \HodgeLambda[g]{\epsilon_i}
    =
    \HodgeH(~;\epsilon_1,\ldots,\epsilon_5;u) \,.
\end{equation}
Since in \cref{cor: const maps loc model} we already reformulated \cref{conj:index} as an explicit formula for the left-hand side of the above identity, we deduce the following equivalent characterization of the \namecref{conj:index}.

\begin{prop}
    \label{prop: five hodge no psi local model}
    For $\CYf=\Aaff{5}$, \cref{conj:index} is equivalent to the identity
    \begin{equation}
        \HodgeH(~;\epsilon_1,\ldots, \epsilon_5;u)
        =
        \frac{1}{2}
        \sum_{g>1}
            \frac{B_{2g-2}}{2g-2}
            \left[
                \frac{\sum_{i=1}^5 2\sinh(u\epsilon_i)}
                {\prod_{i=1}^5 2\sinh(\tfrac{u\epsilon_i}{2}) }
            \right]_{2g-2} u^{2g-2}  \,,
    \end{equation}
    where $\sum_{i=1}^5\epsilon_i=0$. \qed
\end{prop}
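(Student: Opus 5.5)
The statement is a direct concatenation of two earlier results, so the plan is a short chaining argument.

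First, I identify the left-hand side of \cref{cor: const maps loc model} with the unpointed Hodge series. The moduli space of constant maps is $\Mbar_g(\Aaff{5},0)=\Mbar_g\times\Aaff{5}$, with obstruction bundle $T_{\Aaff{5}}\boxtimes\bE^\vee$. For $g\geq 2$, localization to the origin gives
\[
\int_{\Mbar_g} \frac{e_{\mgp{T}}\big(T_{\Aaff{5}}|_0\otimes\bE^\vee\big)}{e_{\mgp{T}}\big(T_{\Aaff{5}}|_0\big)} .
\]
Splitting into the characters $\epsilon_i$ gives $e(\bE^\vee\otimes\epsilon_i)=\sum_k(-1)^k\lambda_k\epsilon_i^{g-k}$. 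Dividing by $\epsilon_i$ gives exactly $\HodgeLambda[g]{\epsilon_i}$. Therefore $\GWargs{\Aaff{5}}{\mgp{T}}{0}=\HodgeH(~;\epsvec{[5]};u)$, once the genus variable $u^{2g-2}$ is inserted.

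The genera $g=0,1$ must be handled by convention. Since $\Mbar_0$ and $\Mbar_1$ are unstable or empty for unpointed curves, the sum starts at $g>1$. This matches the range $g>1$ on the right-hand side, and also matches the fact that the $u^{-2}$, $\log u$ and constant terms of \cref{prop: sugra index asymptotics} are excluded by the $\mathrm{o}(1)$ convention in \cref{conj:index}. I will state this explicitly, since it is the only point that needs care: the symbol $\sim$ only compares positive powers of $u$.

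Next, I match the closed forms. Since $\cs(2u\epsilon_i)=2\sinh(u\epsilon_i)$ and $\cs(u\epsilon_i)=2\sinh(u\epsilon_i/2)$, the bracket in \cref{cor: const maps loc model} is literally the bracket in the proposition. Combining the identification above with \cref{cor: const maps loc model} (which rests on \cref{prop: sugra index asymptotics}) then gives the equivalence in both directions.

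There is no real obstacle here. The only subtlety is justifying that taking logarithms and comparing $\mathrm{o}(1)$ tails is exactly what produces the factor $\tfrac12$: \cref{conj:index} compares $2\,\GWargs{\Aaff{5}}{\mgp{T}}{0}$ with the positive-power part of $\log\Exp(\cdots)$, and I will note this when invoking the corollary.
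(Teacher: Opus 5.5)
Your proposal is correct and follows the paper's argument. The paper likewise identifies $\GWargs{\Aaff{5}}{\mgp{T}}{0}$ with $\HodgeH(~;\epsilon_1,\ldots,\epsilon_5;u)$ via the obstruction bundle $T_{\CYf}\boxtimes\bE^\vee$ on $\Mbar_g\times\Aaff{5}$, and then reads off the equivalence from \cref{cor: const maps loc model} using $\cs(2u\epsilon_i)=2\sinh(u\epsilon_i)$ and $\cs(u\epsilon_i)=2\sinh(u\epsilon_i/2)$; your explicit localization at the origin, giving $e(\bE^\vee\otimes\epsilon_i)/\epsilon_i=\HodgeLambda[g]{\epsilon_i}$, just spells out a step the paper states without detail.
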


\subsection{Constant-map contributions for general fivefolds}
We now return to an arbitrary Calabi--Yau fivefold $\CYf$ equipped with a $\mgp{T}$-action. The constant-map contribution to its equivariant Gromov--Witten theory is obtained by integrating the universal degree-zero Hodge expression against the equivariant fundamental class of the target. More precisely, if $\alpha_1,\ldots,\alpha_5$ denote the equivariant Chern roots of $T_{\CYf}$, then
\begin{equation}
    \label{eq: constant maps general fivefold}
    \GWargs{\CYf}{\mgp{T}}{0}
    =
    \int_{[\CYf]_{\mgp{T}}} \left({\textstyle\prod_i} \alpha_i\right) \cdot 
    \HodgeH(~;\alpha_1,\ldots,\alpha_5;u) \,.
\end{equation}
Assuming formula \eqref{eq: 5 lambda 0 psi formula}, this expression can be rewritten in terms of the equivariant characteristic classes of $\CYf$. In this way, the local formula of \cref{prop: five hodge no psi local model} gives rise to a general expression for constant-map contributions.

\begin{prop}
    \label{prop: constant maps general fivefold}
    \Cref{conj:index} is equivalent to the claim that for all Calabi--Yau fivefolds $\CYf$ with the action of a torus $\mgp{T}$ fixing the Calabi--Yau form we have
    \begin{equation}
        \label{eq: constant maps general fivefold formula}
        \GWargs{\CYf}{\mgp{T}}{0}
        =
        \frac{1}{2}
        \sum_{g>1}
        \frac{B_{2g-2}}{2g-2}
        \left[
            \ch[\mgp{T}]\,\chi_{\mgp{T}}\big(\CYf,T_{\CYf}-T_{\CYf}^\vee\big)
        \right]_{2g-2}
        u^{2g-2} \,,
    \end{equation}
    where $\left[\cdot\right]_{2g-2}$ denotes the cohomological degree $(2g-2)$-part.
\end{prop}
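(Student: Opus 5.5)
The plan is to reduce the statement to the local identity of \cref{prop: five hodge no psi local model} by means of equivariant Hirzebruch--Riemann--Roch. The argument is purely formal once one notices that both sides of \eqref{eq: constant maps general fivefold formula} are obtained by applying the same universal construction to the Chern roots $\alpha_1,\ldots,\alpha_5$.

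\emph{Step one: the right-hand side of \eqref{eq: constant maps general fivefold formula}.} Using equivariant HRR (defined by localization whenever $\CYf$ is non-compact), I would write
\[
\ch[\mgp{T}]\,\chi_{\mgp{T}}\big(\CYf,T_{\CYf}-T_{\CYf}^\vee\big)
=
\int_{[\CYf]_{\mgp{T}}} \frac{\sum_i (\re^{\alpha_i}-\re^{-\alpha_i})\prod_i \alpha_i}{\prod_i (1-\re^{-\alpha_i})} \,.
\]
The Calabi--Yau condition $\sum_i\alpha_i=0$ holds equivariantly, because $\mgp{T}$ fixes the Calabi--Yau form. Hence $\prod_i(1-\re^{-\alpha_i})=\prod_i \cs(\alpha_i)$, and the integrand becomes $\prod_i\alpha_i\cdot \sum_i \cs(2\alpha_i)/\prod_i\cs(\alpha_i)$. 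Inserting the grading variable $u$ and extracting the degree-$(2g-2)$ part commutes with the integral, since the integral lowers degree by exactly $10$ and $\prod_i\alpha_i$ supplies degree $10$. Therefore the right-hand side equals
\[
\int_{[\CYf]_{\mgp{T}}} \Big(\textstyle\prod_i \alpha_i\Big)\cdot \frac12\sum_{g>1}\frac{B_{2g-2}}{2g-2}\left[\frac{\sum_i \cs(2u\alpha_i)}{\prod_i\cs(u\alpha_i)}\right]_{2g-2}u^{2g-2} \,.
\]
This is exactly the formula of \cref{prop: five hodge no psi local model} with $\epsilon_i$ replaced by $\alpha_i$, multiplied by $\prod_i\alpha_i$ and integrated.

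\emph{Step two: the left-hand side.} The left-hand side is given by \eqref{eq: constant maps general fivefold}. Its integrand is obtained from $\HodgeH(~;\alpha_1,\ldots,\alpha_5;u)$, which is a symmetric power series in the $\alpha_i$ and hence is expressible through Chern classes of $T_\CYf$. So the implication ``local identity $\Rightarrow$ general formula'' is immediate by substitution.

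\emph{Step three: relating the general formula to \cref{conj:index}.} The exponentiated plethystic side of \cref{conj:index} for general $\CYf$ has the same Mellin-type positive-power asymptotics as in \cref{prop: sugra index asymptotics}. After applying HRR termwise to $\chi_{\mgp{T}}(\CYf,\Psi^m(T^\vee-T))$, the $m$-sum produces the factors $-\zeta(3-2g)=B_{2g-2}/(2g-2)$ exactly as before. So \cref{conj:index} for $\CYf$ is equivalent to \eqref{eq: constant maps general fivefold formula}.

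\emph{Step four: the converse direction, from the general formula to the local one.} Here I would specialize to $\CYf=\Aaff{5}$ with generic Calabi--Yau weights. Then $\int_{[\Aaff{5}]_\mgp{T}}$ is division by $\prod\epsilon_i$, and one recovers \cref{prop: five hodge no psi local model} directly.

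\emph{Main obstacle.} I expect the main difficulty to be justifying Step three for non-compact $\CYf$. There the equivariant Euler characteristic is a localization residue, and the asymptotic expansion must be taken fixed point by fixed point. I would handle this by localizing to the $\mgp{T}$-fixed locus, applying \cref{prop: sugra index asymptotics} to each fixed point (or a fibrewise version of it along fixed components), and summing the contributions. The $\mathrm{o}(1)$ tails add, since the positive-power coefficients are meromorphic and independent of the chamber.
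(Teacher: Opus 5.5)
Your proposal is correct and is essentially the paper's argument. It uses the same HRR rewriting $\ch[\mgp{T}](T_{\CYf}-T_{\CYf}^\vee)\,\td[\mgp{T}](T_{\CYf})=\prod_i\alpha_i\sum_i\cs(2\alpha_i)/\prod_i\cs(\alpha_i)$ under $\sum_i\alpha_i=0$, together with the Mellin argument of \cref{prop: sugra index asymptotics} run on a general $\CYf$ (the paper uses Mellin only for the converse and routes the forward direction through \cref{conj:index} for $\Aaff{5}$). One correction to Step three: the $m$-th term of $\log\Exp$ is $\frac1m$ times the character $\chi_{\mgp{T}}(\CYf,T_{\CYf}^\vee-T_{\CYf})$ with $q\mapsto q^m$ (i.e.\ $u\mapsto mu$ after the Chern character, which rescales the Todd class too), not $\chi_{\mgp{T}}(\CYf,\Psi^m(T_{\CYf}^\vee-T_{\CYf}))$, because the latter leaves $\td[\mgp{T}](T_{\CYf})$ unscaled and does not produce the factors $-\zeta(3-2g)$.
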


\begin{proof}
    First, suppose \Cref{conj:index} is true. By \cref{prop: five hodge no psi local model} and \cref{eq: constant maps general fivefold}, we obtain
    \begin{equation}
        \GWargs{\CYf}{\mgp{T}}{0}
        =
        \frac{1}{2}
        \sum_{g>1}
        \frac{B_{2g-2}}{2g-2}
        \left[
            \int_{[\CYf]_{\mgp{T}}}
            \frac{\sum_{i=1}^5 2\sinh(\alpha_i)}
                 {\prod_{i=1}^5 \cS(\alpha_i)}
        \right]_{2g-2} u^{2g-2}\,.
    \end{equation}
    By the splitting principle, 
    \begin{equation}
        \ch[\mgp{T}]\big(T_{\CYf}-T_{\CYf}^\vee\big)
        =
        \sum_{i=1}^5 2\sinh(\alpha_i) \,,
        \qquad
        \td[\mgp{T}](T_{\CYf})
        =
        \prod_{i=1}^5 \frac{\re^{\alpha_i/2}}{\cS(\alpha_i)} \,.
    \end{equation}
    Using the Calabi--Yau condition \(\sum_i \alpha_i=0\), it follows that
    \begin{equation}
        \frac{\sum_{i=1}^5 2\sinh(\alpha_i)}
             {\prod_{i=1}^5 \cS(\alpha_i)}
        =
        \ch[\mgp{T}]\big(T_{\CYf}-T_{\CYf}^\vee\big)\,\td[\mgp{T}](T_{\CYf}) \,.
    \end{equation}
    Therefore, by equivariant Hirzebruch--Riemann--Roch,
    \begin{equation}
        \left[
            \int_{[\CYf]_{\mgp{T}}}
            \frac{\sum_{i=1}^5 2\sinh(\alpha_i)}
                 {\prod_{i=1}^5 \cS(\alpha_i)}
        \right]_{2g-2}
        =
        \left[
            \ch[\mgp{T}]\,\chi_{\mgp{T}}\big(\CYf,T_{\CYf}-T_{\CYf}^\vee\big)
        \right]_{2g-2} \,.
    \end{equation}
    This proves that \cref{conj:index} implies \cref{eq: constant maps general fivefold formula}. For the converse, one applies the same Mellin-transform argument as in the local case to the plethystic exponential of $\ch[\mgp{T}]\,\chi_{\mgp{T}}(\CYf,T_{\CYf}^\vee-T_{\CYf})$, which recovers the regular tail in \cref{eq: constant maps general fivefold formula} and hence the supergravity index.
\end{proof}

Putting together \cref{prop: sugra index asymptotics}, \cref{prop: five hodge no psi local model}, and \cref{prop: constant maps general fivefold}, we obtain the proof of \cref{thm: five hodge no psi}.

\section{Special regimes for 5-Hodge integrals}
\label{sec:special regimes}

The main result of this section is the proof of \cref{thm: limits}, i.e. a proof of the quintuple Hodge formulas of \cref{thm: 5 Hodge n point formula,thm: five hodge no psi} in two special regimes. In both cases, the argument proceeds by reducing quintuple Hodge integrals to triple Hodge integrals.

\subsection{The anti-diagonal limit}
\label{sec: proof anti-diag limit}
We begin with the anti-diagonal limit:

\begin{customthm}{\ref{thm: limits}.\labelcref{item: antidiagonal}} 
    \label{thm:antidiag}
    Formulas \eqref{eq: 5 Hodge n point formula} and \eqref{eq: 5 lambda 0 psi formula} hold if $\epsilon_i=-\epsilon_j$ for some $i\neq j$.
\end{customthm}

The proof uses Mumford's relation \cite[Eq.~(5.4)]{Mum83:Towards}
\begin{equation}
    \HodgeLambda[g]{\epsilon}\HodgeLambda[g]{-\epsilon}
    =
    (-\epsilon^2)^{g-1} \,.
\end{equation}
By this identity, quintuple Hodge integrals reduce to triple Hodge integrals in the anti-diagonal specialization:
\begin{equation}
    \label{eq: dim reduction 5 to 3}
    \HodgeH(\boldsymbol{z};\epsvec{[5]};u)\big|_{\epsilon_j=-\epsilon_i}
    =
    \HodgeH(\boldsymbol{z};\epsvec{[5]\setminus\{i,j\}};\ri\epsilon_i u) \,.
\end{equation}
Thus, the proof in this regime reduces to explicit formulas for triple Hodge integrals. The unmarked case follows from \cite{FP00:HodgeIntGW}, the one-point case from \cite{FP00:HodgeIntGW} and \cite{OP04:HodgeIntUnknot}, while the case with several markings is treated in the appendix via a Fock space generalization of the latter result.

\subsubsection{No markings}
In \cite[Thm.~4]{FP00:HodgeIntGW} it was shown that, in the unmarked case, the triple Hodge generating series is
\begin{equation}
    \label{eq: 3 lambda 0 psi formula}
    \HodgeH(~;\epsvec{[3]};\ri u)
    =
    -\frac{1}{2} \sum_{g>1}
    \frac{B_{2g-2}}{2g-2}
    \left[\cs(u)^{-2}\right]_{2g-2} u^{2g-2} \,,
\end{equation}
under the Calabi--Yau condition $\sum_{i=1}^3\epsilon_i=0$. This is compatible with our conjectural quintuple formula \labelcref{{eq: 5 lambda 0 psi formula}}, that is
\begin{equation}
    \HodgeH(~;\epsvec{[5]};u)
    =
    \frac{1}{2} \sum_{g>1}
    \frac{B_{2g-2}}{2g-2}
    \left[
        \frac{\sum_{i=1}^5 \cs(2u\epsilon_i)}{\prod_{i=1}^5 \cs(u\epsilon_i)}
    \right]_{2g-2} u^{2g-2} \,.
\end{equation}
Indeed, using
\begin{equation}
    \left.
    \frac{\sum_{i=1}^5 \cs(2u\epsilon_i)}{\prod_{i=1}^5 \cs(u\epsilon_i)}
    \right|_{\epsilon_j=-\epsilon_i}
    =
    -\frac{1}{\cs(u\epsilon_i)^2} \,,
\end{equation}
one sees that the quintuple expression restricts precisely to \eqref{eq: 3 lambda 0 psi formula} with $u$ replaced by $\epsilon_i u$.

\subsubsection{A single marking}
We next prove \cref{thm:antidiag} for the one-point series $\HodgeH(\epsilon_1^{-1};\epsvec{[5]};u)$. Here one must distinguish the cases $1\notin\{i,j\}$ and $1\in\{i,j\}$.

We begin with the case $1\notin\{i,j\}$. Without loss of generality, consider the specialization $\epsilon_4=-\epsilon_5$. In this limit using Mumford's relation, formula \eqref{eq: 5 Hodge n point formula} is equivalent to
\begin{equation}
\begin{split}
    \HodgeH(\epsilon_1^{-1};\epsvec{[3]};\ri\epsilon_4u)
    &=
    \frac{1}{u^2}
    \frac{\gamfct{-\epsilon_1^{-1}\epsilon_2}{u\epsilon_1}}{\epsilon_2}
    \frac{\gamfct{-\epsilon_1^{-1}\epsilon_3}{u\epsilon_1}}{\epsilon_3}
    \frac{\gamfct{-\epsilon_1^{-1}\epsilon_4}{u\epsilon_1}}{\epsilon_4}
    \frac{\gamfct{\epsilon_1^{-1}\epsilon_4}{u\epsilon_1}}{-\epsilon_4} \\
    &=
    -\frac{1}{\epsilon_2 \epsilon_3} \frac{1}{( \epsilon_4 u)^2 \cS(\epsilon_4 u)} \,,
\end{split}
\end{equation}
where the last equality follows from \cref{lem: gamma fct reflect}. Therefore, the specialization $\epsilon_4=-\epsilon_5$ of our one-point quintuple formula follows from the triple Hodge identity proved in \cite[Prop.~5.1]{Nes22:GWHurwitz} (see also \cref{prop: triple hodge formulas 1}):
\begin{equation}
    \label{eq: 3H n1 nonCY}
    \HodgeH(\epsilon_1^{-1};\epsvec{[3]};\ri u)
    =
    - \frac{1}{\epsilon_2 \epsilon_3} \frac{1}{u^2 \cS(u)} \,.
\end{equation}

It remains to consider the case $1 \in \{i,j\}$. Without loss of generality, consider the specialization $\epsilon_1=-\epsilon_5$. In this regime, $\HodgeH(\epsilon_1^{-1};\epsvec{[5]};u) = \HodgeH(\epsilon_1^{-1};\epsvec{\{2,3,4\}};\ri\epsilon_1u)$. Thus, unlike in the previous case, the cotangent-class variable is independent of the Hodge variables, so \eqref{eq: 3H n1 nonCY} no longer applies directly. Fortunately, the required formula is available in \cite[Eq.~(2.15)]{OP04:HodgeIntUnknot}:
\begin{equation}
    \HodgeH(z;\epsvec{\{2,3,4\}};\ri u)
    =
    - \frac{1}{z u^2}
    \prod_{i=2}^4 \frac{\gamfct{-z\epsilon_i}{u}}{\epsilon_i} \,.
\end{equation}
This is again compatible with \eqref{eq: 5 Hodge n point formula}.

\subsubsection{Multiple markings}
The proof of \cref{thm: limits}.\labelcref{item: antidiagonal} with $n>1$ markings proceeds in the same spirit: one shows that the conjectural formulas \eqref{eq: 5 Hodge n point formula} are compatible with the corresponding formulas for triple Hodge integrals. The main point is that the required triple Hodge formulas do not seem to be available in the literature. We therefore derive them in \cref{sec: triple Hodge formulas 1,sec: triple Hodge formulas 2} by two different methods. The first method extracts the relevant identities from localization relations together with known Gromov--Witten invariants. The second uses a result of Okounkov and the second author \cite{OP04:HodgeIntUnknot}, which expresses triple Hodge integrals with integer $\psi$-class variables $\zvec{[n]}\in \bZ_{>0}^n$ as vacuum expectation values of operators on Fock space. The main difficulty in this approach is to analytically continue the resulting expressions to non-integer arguments.

The triple Hodge formulas needed for the proof of \cref{thm:antidiag} are collected in \cref{prop: triple hodge formulas 1,prop: triple hodge formulas 2}. We omit the straightforward verification that these formulas imply the required anti-diagonal specialization of \eqref{eq: 5 Hodge n point formula}.

\subsection{The \texorpdfstring{$\lambda_g\lambda_{g-1}$-}{top Hodge classes }limit}
\label{sec: proof lamg lamg-1 limit}

In this section, we establish our conjectural formulas in the following specialization.

\begin{customthm}{\ref{thm: limits}.\labelcref{item: lamg lamg-1}} 
    \label{thm: lamg lamg-1 limit}
    After multiplication of both sides of the equation by $\epsilon_1\cdots\epsilon_5$, formulas \eqref{eq: 5 Hodge n point formula}, for $1\le n\le 3$, and \eqref{eq: 5 lambda 0 psi formula}
    hold modulo $\left(\sum_{i=1}^5 \epsilon_i,\epsilon_4^2,\epsilon_4\epsilon_5,\epsilon_5^2\right)$.
\end{customthm}

The content of \cref{thm: lamg lamg-1 limit} is that the formulas of \cref{thm: 5 Hodge n point formula,thm: five hodge no psi} hold to next-to-leading order in the Laurent expansion around $\epsilon_4 = \epsilon_5=0$. The leading coefficient $\epsilon_4^{-1}\epsilon_5^{-1}$ follows from \cref{thm: limits}.\labelcref{item: antidiagonal}, by setting $\epsilon_4 = -\epsilon_5$ and then taking $\epsilon_5 \to 0$. It therefore remains to compare the coefficient of $\epsilon_4^0 \epsilon_5^{-1}$.

Throughout this section, we impose
\begin{equation}
    \epsilon_1+\epsilon_2+\epsilon_3=0
\end{equation}
and, for any expression $F$ in the five weights, use the notation
\begin{equation}
    \big[\epsilon_4^0\epsilon_5^{-1}\big]_{\mathrm{CY}} \, F
    \coloneqq
    \big[\epsilon_4^0\epsilon_5^{-1}\big]\,
    F\big(
        \epsilon_1,\epsilon_2,
        \epsilon_3-\epsilon_4-\epsilon_5,
        \epsilon_4,\epsilon_5
    \big) \,.
\end{equation}
Thus, the coefficient is extracted after imposing the fivefold Calabi--Yau condition, while the variables $\epsilon_1,\epsilon_2,\epsilon_3$ in the resulting expression satisfy the threefold Calabi--Yau condition. In particular, the substitution applies to every occurrence of the weights, including those in the cotangent-line variables.

To formulate the required identities, it is convenient to introduce the linear operator $\Psi \colon \bQ\llbracket u^2 \rrbracket \to \bQ\llbracket u^2 \rrbracket$ defined on monomials by
\begin{equation}
    \Psi(u^{2g-2}) = \frac{B_{2g}}{2g} \, u^{2g-2} \,,
    \qquad
    g \geq 1 \,,
\end{equation}
and extended linearly and coefficient-wise after extension of scalars.

With this notation, the coefficient identities required for the proof of
\cref{thm: lamg lamg-1 limit} take the following form.

\begin{prop}
    \label{prop: first derivative formula}
    For the unpointed series, we have
    \begin{equation}
        \label{eq: first derivative formula n0}
        \big[\epsilon_4^0\epsilon_5^{-1}\big]_{\mathrm{CY}} \,
        \HodgeHst\big(~;\epsvec{[5]};u\big)
        =
        u^2 \,
        \Psi\left(
            \frac{1}{2u^2}
            \sum_{i=1}^3
            \left(
                \frac{\kappa(u\epsilon_i)}{u}
                -
                \frac{1}{u^2\epsilon_i}
            \right)
        \right) \,.
    \end{equation}
    For $n\in\{1,2,3\}$, we have
    \begin{equation}
        \big[\epsilon_4^0\epsilon_5^{-1}\big]_{\mathrm{CY}} \,
        \HodgeHst\big(\epsvec[-1]{[n]};\epsvec{[5]};u\big)
        =
        \Psi \big(\vertexlimit\big(\epsvec{[n]};u\big)\big)
        \frac{(-1)^n}{2}
        \prod_{j=1}^n \prod_{i\neq j}
            \frac{1}{\epsilon_i}\,
        +
        \delta_{n,3}\,
        \frac{2}{u^2\epsilon_1^2\epsilon_2^2\epsilon_3^3} \,,
    \end{equation}
    where
    \begin{equation}
        \label{eq: U vertex}
        \vertexlimit(\epsvec{[n]};u)
        \coloneqq
        \begin{cases}
            \dfrac{1}{u}
            \dfrac{
                \cs(u\epsilon_2)\cs(u\epsilon_3)
            }{
                \cs(u\epsilon_1)
            }
            & n=1 \,,\\[1em]
            \dfrac{1}{u}
            \dfrac{
                \cs(u\epsilon_3)^3
            }{
                \cs(u\epsilon_1)\cs(u\epsilon_2)
            }
            & n=2 \,,\\[1em]
            \dfrac{1}{u}
            \left(\prod_{i=1}^3\cs(u\epsilon_i)\right)
            \left(
                \left(\sum_{i=1}^3\kappa(u\epsilon_i)\right)^2
                -\frac14
            \right)
            & n=3 \,.
        \end{cases}
    \end{equation}
    In particular, \cref{thm: lamg lamg-1 limit} follows.
\end{prop}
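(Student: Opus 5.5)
The plan is to expand the integrand to first order in $\epsilon_4,\epsilon_5$ after imposing $\epsilon_3\mapsto\epsilon_3-\epsilon_4-\epsilon_5$. This isolates a $\lambda_g\lambda_{g-1}$ factor, which Mumford's formula turns into $\kappa$- and $\psi$-insertions. The resulting triple Hodge integrals are then evaluated by the formulas of \cref{sec: triple Hodge formulas 1,sec: triple Hodge formulas 2}.

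\textbf{Step 1: expand the two Hodge factors.} Write
\[
\HodgeLambda[g]{\epsilon_4}\HodgeLambda[g]{\epsilon_5}=\sum_{k,l}(-1)^{k+l}\lambda_k\lambda_l\,\epsilon_4^{g-1-k}\epsilon_5^{g-1-l}.
\]
Only $k,l\in\{g-1,g\}$ survive modulo $(\epsilon_4^2,\epsilon_4\epsilon_5,\epsilon_5^2)$ after the overall rescaling by $\epsilon_4\epsilon_5$.
\begin{itemize}
\item The coefficient of $\epsilon_4^{-1}\epsilon_5^{-1}$ is $\lambda_g^2=0$ in the stable range.
\item The coefficient of $\epsilon_4^0\epsilon_5^{-1}$ gets two contributions. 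The first is $-\lambda_g\lambda_{g-1}$ times the remaining triple product $\prod_{i\le 3}\HodgeLambda[g]{\epsilon_i}$ at the CY threefold point. The second is $\lambda_g^2$ times the $\epsilon_4$-derivative arising from the substitution $\epsilon_3\mapsto\epsilon_3-\epsilon_4-\epsilon_5$. This substitution also enters the cotangent variables $z_j=\epsilon_j^{-1}$ when $3\le n$. The second contribution vanishes again since $\lambda_g^2=0$.
\end{itemize}
There is, however, a genuine extra term in the $n=3$ case. There the marking variable $\epsilon_3^{-1}$ is shifted, and one must differentiate the unstable or low-genus pieces, e.g.\ the $\Mbar_{0,3}$ term $u^{-2}\prod_i\epsilon_i^{-1}$. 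This produces the isolated correction $\delta_{n,3}\,2/(u^2\epsilon_1^2\epsilon_2^2\epsilon_3^3)$, which I would compute by hand.

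\textbf{Step 2: reduce to triple integrals with $\mathrm{ch}_{2g-1}$.} Use
\[
\lambda_g\lambda_{g-1}=(-1)^{g-1}(2g-1)!\,\mathrm{ch}_{2g-1}(\mathbb E).
\]
Apply Mumford's GRR formula for $\mathrm{ch}_{2g-1}$, which expresses it through $\kappa_{2g-1}$, $\sum_j\psi_j^{2g-1}$ and boundary pushforwards. Every term carries the coefficient $B_{2g}/(2g)!$; this is precisely the origin of the operator $\Psi$.

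Next observe that $\lambda_g\lambda_{g-1}$ kills all boundary strata except the irreducible one. On the irreducible stratum the three remaining $\Lambda$'s obey the CY relation, and by Mumford's relation $\HodgeLambda[g]{\epsilon}\HodgeLambda[g]{-\epsilon}=(-\epsilon^2)^{g-1}$ type arguments they collapse. The $\kappa_{2g-1}$ term becomes an integral over $\Mbar_{g,n+1}$ with an extra $\psi^{2g}$ marking, via the string and dilaton comparison. The $\psi_j^{2g-1}$ terms instead shift the existing markings.

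The upshot is that the coefficient equals
\[
\Psi\Bigl(\text{triple Hodge series with markings } \epsilon_j^{-1}\text{, plus one marking at } z\to\text{leading order}\Bigr),
\]
where the extra marking contributes through the coefficient of $z^{2g}$, and the $\epsilon_j$ satisfy the CY condition.

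\textbf{Step 3: evaluate the triple series.} Insert the closed triple Hodge formulas with CY-compatible markings from \cref{prop: triple hodge formulas 1,prop: triple hodge formulas 2}, and in the unmarked case \eqref{eq: 3 lambda 0 psi formula}. The functions $\cs$ and $\kappa$ arise as follows:
\begin{itemize}
\item $\kappa$ arises from $\partial_z\log$ of the $\gamma$-factors, using \cref{lem: gamma fct reflect} for the reflected factors.
\item The terms $1/(u^2\epsilon_i)$ in the unpointed formula are the subtracted unstable contributions.
\end{itemize}
Matching these gives the stated $\vertexlimit$.

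\textbf{Step 4: deduce \cref{thm: lamg lamg-1 limit}.} Carry out the same coefficient extraction on the right-hand sides of \eqref{eq: 5 Hodge n point formula} and \eqref{eq: 5 lambda 0 psi formula}. This is an elementary series expansion: expand $\gamfct{-\epsilon_j^{-1}\epsilon_i}{u\epsilon_j}$ to first order in $\epsilon_4,\epsilon_5$ using the Bernoulli-polynomial formula of \cref{defprop: gamma fct}, where the $B_{2k}/(2k)$ factors reproduce $\Psi$. Then compare with the formulas above. The leading $\epsilon_4^{-1}\epsilon_5^{-1}$ coefficient is already covered by part (i).

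\textbf{Main obstacle.} I expect the hardest step to be Step 2/3 for $n=3$: bookkeeping the $\kappa$- and $\psi$-terms of Mumford's formula against the shifted marking and the unstable corrections, and then identifying the answer with $(\sum\kappa)^2-\tfrac14$. I would first verify the identities in low genus numerically to fix signs and normalizations.
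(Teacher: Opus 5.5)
Your Step 1 matches the paper: $\lambda_g^2=0$ isolates $-\lambda_g\lambda_{g-1}$ against the threefold product, and the genus-zero $\Mbar_{0,3}$ term produces $\delta_{n,3}\,2/(u^2\epsilon_1^2\epsilon_2^2\epsilon_3^3)$. The gap is in Step 2, where the reduction to triple Hodge integrals goes wrong.

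You discard the separating-boundary terms of Mumford's formula for $\ch[2g-1](\bE)$ on the grounds that ``$\lambda_g\lambda_{g-1}$ kills all boundary strata except the irreducible one''. That argument does not work. You are expanding $\lambda_g\lambda_{g-1}$ itself, so nothing multiplies those terms by $\lambda_g\lambda_{g-1}$. Moreover, the restriction statement points the other way: $\iota^*\lambda_g=0$, so the irreducible divisor is where $\lambda_g\lambda_{g-1}$ restricts to zero.

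The separating terms do not vanish on their own. Already for $g=2$, $n=0$ one has $\int\kappa_3=\tfrac1{1152}$ and $\tfrac12\int\iota'_*(\psi^2-\psi\psi'+\psi'^2)=-\tfrac1{1152}$. So keeping the $\kappa_{2g-1}$ term while dropping the separating term gives the wrong answer. The missing input is the Liu--Pandharipande relation
\[
\kappa_{2g-1}-\sum_{i=1}^n\psi_i^{2g-1}+\tfrac12\,\iota'_*\sum_{k=0}^{2g-2}(-1)^k\psi_{n+1}^k\psi_{n+2}^{2g-2-k}=0 .
\]
This cancels the $\kappa_{2g-1}$, $\psi_i^{2g-1}$ and separating terms together, which is \cref{lem: lam g lam g minus one formula}. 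The $\kappa$- and $\psi$-contributions you plan to evaluate via a single extra marking should therefore not appear at all. If you do keep them, they must be combined with the separating products you dropped.

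The irreducible-boundary term is in fact the entire answer, and it does not ``collapse''. Mumford's relation $\HodgeLambda[g]{\epsilon}\HodgeLambda[g]{-\epsilon}=(-\epsilon^2)^{g-1}$ is irrelevant, because $\epsilon_1+\epsilon_2+\epsilon_3=0$ does not make any two weights opposite. Instead, $\iota^*\HodgeLambda[g]{\epsilon_i}=\epsilon_i\HodgeLambda[g-1]{\epsilon_i}$ turns this term into triple Hodge integrals on $\Mbar_{g-1,n+2}$ with \emph{two} new markings carrying $\psi_{n+1}^k\psi_{n+2}^{2g-2-k}$. By homogeneity these package as $[z^2]\,\frac{\epsilon_1\epsilon_2\epsilon_3}{2}\Psi\bigl((\ri z^{-1}u)^2\HodgeHst(z,-z,\epsvec[-1]{[n]};\epsvec{[3]};\ri z^{-1}u)\bigr)$, which is \cref{cor: first derivative to 3H}.

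Your one-extra-marking ``upshot'' is therefore the wrong reduction, and Step 3 then uses the wrong inputs. What is needed are the antidiagonal formulas \eqref{eq: 3H n2 zz}, \eqref{eq: 3H n3 zz}, \eqref{eq: 3H n4 zz}, \eqref{eq: 3H n5 zz} for $n=0,1,2,3$. Neither \cref{prop: triple hodge formulas 1} nor the single-$z$ formulas are the right tools. For $n=0$ the $\kappa$'s come from $\Psi(\cS(2u\epsilon))=\kappa(u\epsilon)/(u\epsilon)-1/(u^2\epsilon^2)$ applied to \eqref{eq: 3H n2 zz}. For $n=3$ the factor $(\sum_i\kappa(u\epsilon_i))^2-\frac14$ is read off directly from \eqref{eq: 3H n5 zz}, not from $\partial_z\log\gamma$.
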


A direct expansion shows that the right-hand sides of \cref{prop: first derivative formula} are precisely the coefficients of $\epsilon_4^0 \epsilon_5^{-1}$ in the conjectural formulas \eqref{eq: 5 Hodge n point formula} and \eqref{eq: 5 lambda 0 psi formula}. 
The following identities are useful for this comparison:
\begin{equation}
    \sum_{k\geq0}
        \frac{B_{2k}(x)}{(2k)!}\,t^{2k}
    =
    \frac{t}{2}\,
    \frac{\cosh\frac{t-2tx}{2}}{\sinh\frac{t}{2}} \,,
    \quad
    \gamfct{x}{u}
    =
    \exp\left(
        \frac{u^2}{2}\,
        \Psi\left(
            \frac{1}{u^2}
            \left(
                \frac{\cs\big(u(2x-1)\big)}{\cs(u)}
                -2x+1
            \right)
        \right)
    \right) \,.
\end{equation}
The remainder of this section is devoted to the proof of \cref{prop: first derivative formula}. The strategy is to reduce the fivefold coefficient to a combination of triple Hodge integrals.

We first separate the genus-zero contribution. This occurs only for $n=3$, and a direct expansion gives
\begin{equation}
    \big[\epsilon_4^0\epsilon_5^{-1}\big]_{\mathrm{CY}}
    \HodgeHst\big(\epsvec[-1]{[3]};\epsvec{[5]};u\big)
    =
    \frac{2}{u^2\epsilon_1^2\epsilon_2^2\epsilon_3^3} + O(u^0) \,.
\end{equation}
For positive genus, the vanishing $\lambda_g^2=0$ gives
\begin{equation}
    \label{eq: H circ lam g lam g1 limit}
    \big[\epsilon_4^0\epsilon_5^{-1}\big]_{\mathrm{CY}}\,
    \HodgeHst\big(\epsvec[-1]{[n]};\epsvec{[5]};u\big)
    =
    \delta_{n,3}
    \frac{2}{u^2\epsilon_1^2\epsilon_2^2\epsilon_3^3}
    -
    \sum_{g>0}
    u^{2g-2}
    \int_{\Mbar_{g,n}}
    \frac{
        \HodgeLambda[g]{\epsilon_1}
        \HodgeLambda[g]{\epsilon_2}
        \HodgeLambda[g]{\epsilon_3}
    }{
        \prod_{j=1}^n(\epsilon_j-\psi_j)
    }
    \lambda_g\lambda_{g-1} \,.
\end{equation}
The following \namecref{lem: lam g lam g minus one formula} reduces the
positive-genus contribution to triple Hodge integrals.

\begin{lem}
\label{lem: lam g lam g minus one formula}
    Let $\iota \colon \Mbar_{g-1,n+2} \to \Mbar_{g,n}$ be the normalization map of the irreducible boundary divisor. For
    $g\geq1$ and $2g-2+n>0$, we have
    \begin{equation}
        \label{eq: lam g lam g minus one formula}
        \lambda_g\lambda_{g-1}
        =
        \frac{B_{2g}}{4g}\,
        \iota_*
        \sum_{k=0}^{2g-2}
            (-1)^{g-1-k} \,
            \psi_{n+1}^k \psi_{n+2}^{2g-2-k}
    \end{equation}
    in $\Chow^{2g-1}(\Mbar_{g,n})$.
\end{lem}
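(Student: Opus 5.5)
We plan to derive the formula from Mumford's Grothendieck--Riemann--Roch computation of $\ch(\bE)$, combined with the identity $\lambda_g\lambda_{g-1}=(-1)^{g-1}(2g-1)!\,\ch[2g-1](\bE)$ already mentioned in the introduction. Recall Mumford's formula on $\Mbar_{g,n}$:
\begin{equation*}
    \ch[2g-1](\bE)
    =
    \frac{B_{2g}}{(2g)!}\Big[
        \kappa_{2g-1}
        -\sum_{j=1}^n \psi_j^{2g-1}
        +\frac12\sum_{\xi}\xi_*\Big(\sum_{k=0}^{2g-2}(-1)^k\psi_{\star}^k\psi_{\bullet}^{2g-2-k}\Big)
    \Big],
\end{equation*}
where $\xi$ ranges over all boundary gluing maps (irreducible and reducible), and $\psi_\star,\psi_\bullet$ are the cotangent classes at the two branches of the node. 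This identity holds in $\Chow^{2g-1}(\Mbar_{g,n})$.

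The first step is to establish the identity $\lambda_g\lambda_{g-1}=(-1)^{g-1}(2g-1)!\,\ch[2g-1](\bE)$. It follows from Newton's identities together with Mumford's relation $c(\bE)c(\bE^\vee)=1$. That relation kills all even Chern characters and forces the products $\lambda_i\lambda_j$ to collapse in top degree, leaving only $\lambda_g\lambda_{g-1}$ in degree $2g-1$. This step is classical and routine.

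The second and main step is to show that, after multiplying by $\lambda_g\lambda_{g-1}$'s characterisation, only the irreducible boundary term survives. We argue as follows. Multiply Mumford's expression by $\lambda_g$, using that $\lambda_g\lambda_{g-1}\cdot\lambda_g=0$. Then observe:
\begin{itemize}
    \item[(a)] $\lambda_g$ restricts to zero on the irreducible boundary divisor.
    \item[(b)] On reducible boundary strata, $\lambda_g$ splits as $\lambda_{g_1}\lambda_{g_2}$.
\end{itemize}
This handles the multiplied identity, but it does not directly isolate terms in $\lambda_g\lambda_{g-1}$ itself.

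A better approach is the following. The class $\lambda_g\lambda_{g-1}$ vanishes on the complement of the irreducible-boundary image, i.e.\ on the locus of curves with compact-type-or-rational-tails-free structure. More precisely, $\lambda_g\lambda_{g-1}$ vanishes on $\Mbar_{g,n}\setminus\Delta_{\mathrm{irr}}$, since $\lambda_g\lambda_{g-1}=0$ on the compact type locus: there $\bE$ splits as a direct sum, and $\lambda_g\lambda_{g-1}$ of a direct sum of Hodge bundles vanishes by Mumford's relation on each factor. Hence $\lambda_g\lambda_{g-1}=\iota_*(\alpha)$ for some class $\alpha$ by the excision sequence. To pin down $\alpha$, we would argue that the interior terms $\kappa_{2g-1}-\sum_j\psi_j^{2g-1}$ together with the reducible-boundary terms must cancel against one another modulo $\iota_*$. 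Rather than prove this cancellation abstractly, we would use the fact that Mumford's GRR derivation localises: the $\kappa$ and $\psi$ terms and the reducible-node terms arise from the Todd class of the relative dualising sheaf at compact-type nodes, and their combination is exactly $\ch[2g-1]$ of the Hodge bundle of a compact-type family. That combination vanishes in degree $2g-1$ because a compact-type $\bE$ is an extension of pulled-back Hodge bundles $\bE_{g_1}\oplus\bE_{g_2}$ of lower rank, and $\ch[2g-1]$ vanishes above twice the rank minus one.

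This step is the main obstacle. Making it rigorous at the Chow level requires care, because excision only determines $\alpha$ modulo the kernel of $\iota_*$.

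The fallback is a cleaner route. Apply Mumford's GRR on the family of genus-$g$ curves restricted over $\Mbar_{g,n}$, but write $\ch(\bE)=\ch(\pi_*\omega)$ via $\bE$ on the normalisation-of-$\Delta_{\mathrm{irr}}$ description. Alternatively, cite the known formula from Faber--Pandharipande, $\ch[2g-1](\bE)\cdot[\text{compact type}]=0$, combined with the structure of Mumford's formula. In either version, one obtains $\lambda_g\lambda_{g-1}=(-1)^{g-1}(2g-1)!\frac{B_{2g}}{(2g)!}\cdot\frac12\,\iota_*\sum_k(-1)^k\psi_{n+1}^k\psi_{n+2}^{2g-2-k}$. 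The $\frac12$ is the automorphism factor of swapping the two branches of the non-separating node. Simplifying gives the constants
\begin{equation*}
    \frac{(2g-1)!}{(2g)!}\cdot\frac{B_{2g}}{2}=\frac{B_{2g}}{4g},
\end{equation*}
with sign $(-1)^{g-1+k}=(-1)^{g-1-k}$, which matches \eqref{eq: lam g lam g minus one formula}.
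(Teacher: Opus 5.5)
You have the right skeleton: the identity $\lambda_g\lambda_{g-1}=(-1)^{g-1}(2g-1)!\,\mathrm{ch}_{2g-1}(\bE)$ plus Mumford's formula, and your constants and signs are correct. But the entire content of the lemma is that the non-irreducible part of Mumford's formula,
\[
K\coloneqq\kappa_{2g-1}-\sum_{j=1}^n\psi_j^{2g-1}+\tfrac12\,\iota'_*\sum_{k=0}^{2g-2}(-1)^k\psi_{n+1}^k\psi_{n+2}^{2g-2-k}
\]
(with $\iota'$ the separating gluing maps), vanishes in $\Chow^{2g-1}(\Mbar_{g,n})$. None of your arguments proves this.

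Your claim that $\lambda_g\lambda_{g-1}$ vanishes on the compact-type locus ``because $\bE$ splits there'' does not work. $\bE$ splits only over separating boundary strata with positive genus on both sides. Over $M_{g,n}$ and the rational-tails locus, which are open in compact type, the curve has a genus-$g$ component and nothing splits. The fact that $\mathrm{ch}_k$ of a Hodge bundle of rank $r$ vanishes for $k\geq 2r$ therefore says nothing about $\mathrm{ch}_{2g-1}$ of the rank-$g$ bundle there. The same mistake underlies your assertion that $K$ vanishes because it is ``$\mathrm{ch}_{2g-1}$ of a compact-type Hodge bundle built from lower-rank pieces''.

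Moreover, even the correct restriction statement (true, but for deeper reasons) only gives $\lambda_g\lambda_{g-1}=\iota_*\alpha$ for an unknown $\alpha$. Comparing with Mumford's formula then yields $K\in\operatorname{im}(\iota_*)$, not $K=0$. This is precisely the ambiguity you point out yourself. Your fallback does not remove it:
\begin{itemize}
\item ``$\mathrm{ch}_{2g-1}(\bE)\cdot[\text{compact type}]=0$'' is not a Chow-level statement, since the compact-type locus is open.
\item The sentence ``in either version, one obtains \ldots'' simply asserts the conclusion.
\end{itemize}

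The missing ingredient is an actual tautological relation that eliminates $\kappa_{2g-1}$ in favour of separating boundary terms. The paper imports it from Liu--Pandharipande \cite[Eq.~(7)]{LP11:NewTRR}. On $\Mbar_g$ one has $\kappa_{2g-1}+\tfrac12\iota'_*\sum_k(-1)^k\psi^k\psi^{2g-2-k}=0$, and pulling this back along the forgetful map gives $K=0$ on $\Mbar_{g,n}$. Substituting into Mumford's formula leaves only the $\iota_*$ term, and your constant bookkeeping then finishes the proof. The case $g=1$ is the classical $\lambda_1=\frac{1}{24}\iota_*1$.
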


\begin{proof}
    For $g=1$, the statement is the standard relation
    $\lambda_1=\frac{1}{24}\iota_*1$, so we assume $g\geq2$.
    By \cite[\S4.3]{FP00:HodgeIntGW},
    \begin{equation}
        \lambda_g\lambda_{g-1}
        =
        (-1)^{g-1} (2g-1)! \,
        \ch[2g-1](\bE) \,.
    \end{equation}
    Mumford's formula for the Chern character
    \cite[Eq.~(5.2)]{Mum83:Towards} therefore gives
    \begin{multline}
        \lambda_g\lambda_{g-1}
        =
        \frac{B_{2g}}{2g}
        \Bigg(
            (-1)^{g-1} \kappa_{2g-1}
            -
            (-1)^{g-1} \sum_{i=1}^n\psi_i^{2g-1}
            \\
            +
            \frac{1}{2}
            \sum_{k=0}^{2g-2}
            (-1)^{g-1-k}
            \Bigl(
                \iota'_* \psi_{n+1}^k\psi_{n+2}^{2g-2-k}
                +
                \iota_* \psi_{n+1}^k\psi_{n+2}^{2g-2-k}
            \Bigr)
        \Bigg) \,,
    \end{multline}
    where $\iota'$ denotes the normalization map of the separating boundary. On the other hand, pulling back the relation
    \cite[Eq.~(7)]{LP11:NewTRR} from $\Mbar_g$ to $\Mbar_{g,n}$ gives
    \begin{equation}
        \kappa_{2g-1}
        -
        \sum_{i=1}^n \psi_i^{2g-1}
        +
        \frac{1}{2}
        \iota'_*
        \sum_{k=0}^{2g-2}
            (-1)^k
            \psi_{n+1}^k\psi_{n+2}^{2g-2-k}
        =
        0 \,.
    \end{equation}
    Substituting this identity into Mumford's formula leaves only the
    irreducible-boundary contribution, as claimed.
\end{proof}

\begin{cor}
    \label{cor: first derivative to 3H}
    For $n \in \{0,1,2,3\}$, we have
    \begin{multline}
    \label{eq: first derivative to 3H}
        \big[\epsilon_4^0\epsilon_5^{-1}\big]_{\mathrm{CY}} \,
        \HodgeHst\big(\epsvec[-1]{[n]};\epsvec{[5]};u\big) \\
        =
        \delta_{n,3}\,
        \frac{2}{u^2\epsilon_1^2\epsilon_2^2\epsilon_3^3}
        +
        [z^2]\,
        \frac{\epsilon_1 \epsilon_2 \epsilon_3}{2}\,
        \Psi\left(
            (\ri z^{-1}u)^2 \,
            \HodgeHst\big(
                z,-z,\epsvec[-1]{[n]};
                \epsvec{[3]};
                \ri z^{-1}u
            \big)
        \right) .
    \end{multline}
\end{cor}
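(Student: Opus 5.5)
The plan is to start from \eqref{eq: H circ lam g lam g1 limit}, replace $\lambda_g\lambda_{g-1}$ using \cref{lem: lam g lam g minus one formula}, and then recognise the resulting boundary integrals as coefficients of a triple Hodge series with two extra markings. The genus-zero term $\delta_{n,3}\,2/(u^2\epsilon_1^2\epsilon_2^2\epsilon_3^3)$ is already separated in \eqref{eq: H circ lam g lam g1 limit}, so only the sum over $g>0$ needs treatment. For $g\ge1$ with $2g-2+n>0$, the projection formula gives
\begin{equation*}
  \int_{\Mbar_{g,n}} \frac{\prod_{i=1}^3\HodgeLambda[g]{\epsilon_i}}{\prod_{j}(\epsilon_j-\psi_j)}\,\lambda_g\lambda_{g-1}
  = \frac{B_{2g}}{4g}\sum_{k=0}^{2g-2}(-1)^{g-1-k}\int_{\Mbar_{g-1,n+2}} \iota^*\Big(\frac{\prod_{i=1}^3\HodgeLambda[g]{\epsilon_i}}{\prod_{j}(\epsilon_j-\psi_j)}\Big)\,\psi_{n+1}^k\psi_{n+2}^{2g-2-k}.
\end{equation*}

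Next I pull back the integrand along $\iota$. The cotangent classes satisfy $\iota^*\psi_j=\psi_j$. For the Hodge bundle, the exact sequence $0\to\bE_{g-1}\to\iota^*\bE_g\to\cO\to0$ on the non-separating boundary gives $c(\iota^*\bE_g)=c(\bE_{g-1})$. Hence $\iota^*\HodgeLambda[g]{\epsilon}=\epsilon\,\HodgeLambda[g-1]{\epsilon}$, and this produces the prefactor $\epsilon_1\epsilon_2\epsilon_3$. The stability condition $2(g-1)-2+(n+2)=2g-2+n>0$ holds exactly in the range of the sum. In particular, for $n=0$ the term $g=1$ is absent on both sides.

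Then I encode the alternating sum by a generating variable $z$. Expanding
\begin{equation*}
  \frac{1}{(z^{-1}-\psi_{n+1})(-z^{-1}-\psi_{n+2})}=-\frac{z^2}{(1-z\psi_{n+1})(1+z\psi_{n+2})},
\end{equation*}
the coefficient of $z^{2g}$ equals $-\sum_k(-1)^{2g-2-k}\psi_{n+1}^k\psi_{n+2}^{2g-2-k}$. This matches the lemma's alternating sum up to the sign $(-1)^{g-1}$. That sign is supplied by the genus variable: the genus-$(g-1)$ term of $(\ri z^{-1}u)^2\HodgeHst(z,-z,\epsvec[-1]{[n]};\epsvec{[3]};\ri z^{-1}u)$ carries $(\ri z^{-1}u)^{2g-2}=(-1)^{g-1}z^{2-2g}u^{2g-2}$. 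The extra $z^{2-2g}$ shifts the extraction $[z^{2g}]$ to $[z^2]$. The operator $\Psi$ then attaches $B_{2g}/(2g)$ to $u^{2g-2}$, and the remaining factor $\tfrac12$ comes from $B_{2g}/(4g)$. Finally, the overall minus sign in \eqref{eq: H circ lam g lam g1 limit} cancels the $-z^2$ from the expansion.

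The main obstacle is this sign and normalisation bookkeeping. I need to check that $\iota_*$ in \cref{lem: lam g lam g minus one formula} carries exactly the same degree-two convention as in Mumford's formula, so that no stray factor $\tfrac12$ appears. I also need to check that the $\Psi$ coefficient extraction is compatible with the $z$-dependence hidden in the argument $\ri z^{-1}u$. I would verify both on the $g=1$ term, using $\lambda_1=\frac1{24}\iota_*1$, before trusting the general signs.
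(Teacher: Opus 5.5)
Your proposal is correct and follows the paper's own proof. The paper likewise rewrites $\lambda_g\lambda_{g-1}$ via \cref{lem: lam g lam g minus one formula} as $[z^{2g}]\,(-1)^g\frac{B_{2g}}{4g}\,\iota_*\frac{1}{(z^{-1}-\psi_{n+1})(-z^{-1}-\psi_{n+2})}$, uses $\iota^*\HodgeLambda[g]{\epsilon_i}=\epsilon_i\HodgeLambda[g-1]{\epsilon_i}$, and absorbs the sign $(-1)^{g-1}$ and the factor $B_{2g}/(2g)$ into the substitution $u\mapsto \ri z^{-1}u$ and the operator $\Psi$; your sign and normalisation bookkeeping checks out, and since the lemma is already stated directly in terms of $\iota_*$ (consistent with $\lambda_1=\tfrac{1}{24}\iota_*1$), no stray factor of $\tfrac12$ arises.
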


\begin{proof}
    By \cref{lem: lam g lam g minus one formula},
    \begin{equation}
        \lambda_g\lambda_{g-1}
        =
        [z^{2g}]\,
        (-1)^g\frac{B_{2g}}{4g}\,
        \iota_*
        \frac{1}{
            (z^{-1}-\psi_{n+1})
            (-z^{-1}-\psi_{n+2})
        }.
    \end{equation}
    Moreover, $\iota^*\HodgeLambda[g]{\epsilon_i} = \epsilon_i \HodgeLambda[g-1]{\epsilon_i}$. Plugging this into \eqref{eq: H circ lam g lam g1 limit} and recalling that $\Psi(u^{2g-2}) = \frac{B_{2g}}{2g} u^{2g-2}$ immediately yields \eqref{eq: first derivative to 3H}.
\end{proof}

By the last \namecref{cor: first derivative to 3H}, the evaluation of the quintuple Hodge integrals in \cref{prop: first derivative formula} is reduced to the extraction of certain Laurent coefficients of triple Hodge integrals, for which explicit formulas are given in \cref{prop: triple hodge formulas 2}.

\begin{proof}[Proof of \cref{prop: first derivative formula}]
    We only prove the statement for the unpointed series, leaving the cases $n>0$ to the reader. For $n=0$, \cref{cor: first derivative to 3H} gives
    \begin{equation}
        \big[\epsilon_4^0\epsilon_5^{-1}\big]_{\mathrm{CY}} \,
        \HodgeHst(~;\epsvec{[5]};u)
        =
        \frac{\prod_{i=1}^3\epsilon_i}{2} \,
        [z^2] \,
        \Psi\Bigl(
            (\ri z^{-1}u)^2 \,
            \HodgeHst\big(z,-z;\epsvec{[3]};\ri z^{-1}u\big)
        \Bigr) \,.
    \end{equation}
    Substituting \eqref{eq: 3H n2 zz} and extracting the coefficient of $z^2$ yields
    \begin{equation}
        \big[\epsilon_4^0\epsilon_5^{-1}\big]_{\mathrm{CY}} \,
        \HodgeHst(~;\epsvec{[5]};u)
        =
        \frac{u^2}{2} \,
        \Psi\left(
            \frac{1}{u^2}
            \Psi\left(
                \sum_{i=1}^3
                \epsilon_i\cS(2u\epsilon_i)
            \right)
        \right) \,.
    \end{equation}
    The claimed formula \eqref{eq: first derivative formula n0} follows from
    \begin{equation}
        \label{eq: Psi sinh to coth}
        \Psi\bigl(\cS(2u\epsilon)\bigr)
        =
        \frac{\kappa(u\epsilon)}{u\epsilon}
        -
        \frac{1}{u^2\epsilon^2} \,.
        \qedhere
    \end{equation}
\end{proof}

\appendix

\section{Triple Hodge integrals from the vertex}
\label{sec: triple Hodge formulas 1}

In this appendix we prove formulas for triple Hodge $\psi$-class integrals in the setting where the formal variable recording the cotangent line coincides with one of the variables recording the insertion of a $\lambda$-class. We stress that we do not impose any Calabi--Yau condition on the variables of the generating series.

\begin{prop}
    \label{prop: triple hodge formulas 1}
    For $n\in\{1,2,3\}$, we have
    \begin{equation}
        \label{eq: 3 lambda 1 3 psi}
        \HodgeH(\epsvec[-1]{[n]} ; \epsvec{[3]} ; u)
        =
        \frac{
            \bigl( \sum_{i=1}^n \epsilon_i^{-1} \bigr)^{n-3}
        }{
            u^2 \bigl( \prod_{j=1}^3\epsilon_j \bigr) \bigl( \prod_{i=1}^n\epsilon_i \bigr)
        }
        \left( \frac{\sin(u/2)}{u/2} \right)^{p_n(\epsvec{[3]})} \,,
    \end{equation}
    where
    \begin{equation}
        p_n(\epsvec{[3]})
        =
        n - 2 +
        \left(\sum_{j=1}^3\epsilon_j\right) \left(\sum_{i=1}^n\epsilon_i^{-1}\right) \,.
    \end{equation}
\end{prop}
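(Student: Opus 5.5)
The appendix title suggests the route: extract these triple Hodge integrals from known Gromov--Witten invariants of a threefold via localization. Consider the local threefold $X=\Tot_{\bP^1}\cO(-1)\oplus\cO(-1)$, or more usefully a partial compactification $X_n$ of $\Aaff{3}$ in which the first $n$ coordinate axes are closed up to rational curves $C_1,\dots,C_n$. The torus has tangent weights $\epsilon_1,\epsilon_2,\epsilon_3$ at the origin, with no Calabi--Yau constraint imposed. We compute the all-genus, degree $[C_1]+\dots+[C_n]$ series of a twisted theory, integrating $\prod\Lambda$-type classes, in two ways.

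First, by virtual localization exactly as in \cref{sec: proof C n larger one}, but now in dimension three. The contracted component at the origin contributes $\HodgeH(\epsvec[-1]{[n]};\epsvec{[3]};u)$, and the contracted components at the points $\infty_j$ contribute one-point series $\HodgeH(-\epsilon_j^{-1};\epsvec[\infty_j]{[3]};u)$. The one-point series are known in closed form: by \cite[Eq.~(2.15)]{OP04:HodgeIntUnknot} and \cite{FP00:HodgeIntGW}, the relevant specialization, where the $\psi$-variable coincides with a Hodge weight, is $\HodgeH(\epsilon_1^{-1};\epsvec{[3]};u)$. This is the $n=1$ case of the proposition. I would prove that case first, directly from Mumford's relation $\Lambda(\epsilon_1)\Lambda(-\epsilon_1)$ combined with the $\lambda_g$-type formula $\int_{\Mbar_{g,1}}\psi^{k}\lambda_{g-i}\cdots$. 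Concretely, I would substitute $z=\epsilon_1^{-1}$ into the Okounkov--Pandharipande formula, use the $\gamma$-reflection \cref{lem: gamma fct reflect} to collapse $\gamma(-1)$-type factors, and check that the result is $\cS(\ri u)^{p_1}$ with $p_1=-1+(\epsilon_1+\epsilon_2+\epsilon_3)/\epsilon_1$.

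Second, I would choose the degrees of the normal bundles so that the global side is computable independently. The cleanest choice is a threefold for which the degree-$\beta_n$ theory reduces, by the degeneration formula as in \cref{lem: local curve GW universal series}, to products of local-curve (cap and tube) series. The Bryan--Pandharipande TQFT \cite{BP08:LocGW} gives these in closed form, and in degree one they are exactly powers of $\sin(u/2)/(u/2)$ with exponents linear in the weights. The equivariant parameters appear through anti-diagonal/level data, which explains why $p_n$ is affine in $\sum_j\epsilon_j$ and in $\sum_i\epsilon_i^{-1}$. Dividing the global answer by the known edge and $\infty_j$ contributions isolates the vertex series $\HodgeH(\epsvec[-1]{[n]};\epsvec{[3]};u)$. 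The prefactor $(\sum_i\epsilon_i^{-1})^{n-3}$ is exactly the genus-zero $\psi$-integral $\int_{\Mbar_{0,n}}\prod_j(\epsilon_j-\psi_j)^{-1}$ appearing in the unstable/genus-zero normalization.

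The main obstacle is that the edge geometry only sees the vertex through the combination of Hodge data at the origin, whereas the proposition is stated for arbitrary $(\epsilon_1,\epsilon_2,\epsilon_3)$ with no Calabi--Yau condition. I would therefore use a two-parameter family of threefolds with varying normal-bundle degrees $d_{i,j}$, as in \cref{sec: membrane moduli n 1}. This produces shift equations in the $\epsilon$'s that are satisfied by both sides, and uniqueness (polynomiality of coefficients in the shifted variable, as in \cref{defprop: gamma fct}) combined with one base case pins down the series. For the base case I would take a specialization such as $\epsilon_3=-\epsilon_1$, where Mumford's relation reduces the integrand to a single $\Lambda$ and $\psi$-integrals, which are computable via the $\lambda_g$-conjecture formula $\int\psi^{a}\lambda_g=\binom{2g-3+n}{a}\int_{\Mbar_{g,1}}\psi^{2g-2}\lambda_g$. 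Verifying that this base case matches \eqref{eq: 3 lambda 1 3 psi} is routine. Making the shift/uniqueness argument rigorous for $n=2,3$, where the vertex series depends on several $\psi$-variables tied to the weights, is where I expect the real work.
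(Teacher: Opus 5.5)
Your framework is the paper's: localize a local Calabi--Yau threefold under the full torus $\Gm[3]$, identify the contracted component at the origin with $\HodgeH(\epsvec[-1]{[n]};\epsvec{[3]};u)$, and divide out the edge and $\infty_j$ factors. But neither of the two inputs this needs is correctly supplied.

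First, the one-point series at $\infty_j$ are evaluated at weights such as $(-\epsilon_1,\epsilon_1+\epsilon_2,\epsilon_1+\epsilon_3)$, whose sum is $\epsilon_1+\epsilon_2+\epsilon_3\neq 0$. You therefore need the $n=1$ formula with the full exponent $p_1=(\epsilon_2+\epsilon_3)/\epsilon_1$. Substituting $z=\epsilon_1^{-1}$ into the Okounkov--Pandharipande formula yields only the Calabi--Yau case $p_1=-1$, since that formula is for weights $1,a,-(a+1)$. Mumford's relation does not help either, because at generic weights no two weights are opposite. The paper instead takes the general $n=1$ case from \cite[Prop.~5.1]{Nes22:GWHurwitz}.

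Second, and more seriously, you never exhibit a threefold whose all-genus series in the relevant class is known independently. This is the heart of the paper's proof, and it rests on properness, which makes the global side a number independent of the weights. For $n=3$ the paper uses the closed topological vertex: $\Mbar_g(X,[C_1]+[C_2]+[C_3])$ is proper, and \cite{BK05:closedVertex} gives $u^{-2}\bigl(\sin(u/2)/(u/2)\bigr)^{-2}$. For $n=2$, the class $[C_1]+[C_2]$ in the closed vertex has non-proper moduli. The paper therefore passes to $\Tot K_{\operatorname{Bl}_{(0,0)}(\bP^1\times\bP^1)}$ in the pulled-back fiber class, with total series $-\frac{2}{u^2}\bigl(\sin(u/2)/(u/2)\bigr)^{-2}$. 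Its localization has two pieces: the smooth fiber through $(\infty,\infty)$, and the reducible fiber through $(0,0)$, which carries the two-valent vertex.

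Your substitutes do not replace this:
\begin{itemize}
\item A point where $n\ge 2$ smooth curves meet is not a local curve. Degenerating near it produces relative vertices rather than the absolute series, so the Bryan--Pandharipande TQFT \cite{BP08:LocGW} does not reach it.
\item Varying the degrees $d_{i,j}$ changes only the edge and $\infty_j$ factors, never the weights at the origin. It therefore produces no shift equation for the $n$-point vertex series. The argument of \cref{sec: membrane moduli n 1} works only because the same one-point series sits at both ends of a single edge. Moreover, for non-proper choices of $d_{i,j}$ the global series is genuinely equivariant, and you have no independent way to compute it.
\item At $\epsilon_3=-\epsilon_1$ you are left with $\HodgeLambda[g]{\epsilon_2}$, which involves all $\lambda_k$. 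The $\lambda_g$-formula therefore does not evaluate your proposed base case.
\end{itemize}
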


\begin{proof}
    The formula for $n=1$ was proved by Schimpf in \cite[Prop.~5.1]{Nes22:GWHurwitz} using the methods of Faber and the second author \cite{FP00:HodgeIntGW}. We therefore turn to the remaining cases, beginning with the three-point formula, which follows directly from the calculation of the closed vertex partition function in \cite{BK05:closedVertex}. The two-point case is slightly more subtle: the analogous curve class in the closed vertex does not give rise to a proper moduli space of stable maps and must therefore be treated using a different local Calabi--Yau geometry.
    
    The closed vertex is a local Calabi--Yau threefold $X$ whose compact toric locus consists of three rational curves $C_1,C_2,C_3$, each with normal bundle $\cO_{\bP^1}(-1)\oplus \cO_{\bP^1}(-1)$, meeting at a point. We display the configuration of these torus orbit closures together with the tangent $\Gm[3]$-weights at each torus fixed point in \cref{fig: closed top vertex}. On the one hand, we compute the all-genus Gromov--Witten series of $X$ in class $\beta=[C_1]+[C_2]+[C_3]$ by localization:
    \begin{equation}
    \label{eq: closed vertex loc}
    \begin{split}
        \sum_{g \geq 0} u^{2g-2}
            \int_{[\Mbar_g(X,\beta)]^{\vir}} 1
        &=
        u^6
        (\epsilon_1\epsilon_2\epsilon_3)^2
        (\epsilon_1+\epsilon_2)^2
        (\epsilon_1+\epsilon_3)^2
        (\epsilon_2+\epsilon_3)^2 \\
        &\quad\times
        \HodgeH(\epsvec[-1]{[3]} ; \epsvec{[3]} ; u) \,
        \HodgeH(-\epsilon_1^{-1} ;
            -\epsilon_1,\epsilon_1+\epsilon_2,\epsilon_1+\epsilon_3 ; u)
        \\
        &\quad\times
        \HodgeH(-\epsilon_2^{-1} ;
            -\epsilon_2,\epsilon_1+\epsilon_2,\epsilon_2+\epsilon_3 ; u) \,
        \HodgeH(-\epsilon_3^{-1} ;
            -\epsilon_3,\epsilon_1+\epsilon_3,\epsilon_2+\epsilon_3 ; u)
        \\
        &=
        (\epsilon_1\epsilon_2\epsilon_3)^2 \,
        \HodgeH(\epsvec[-1]{[3]} ; \epsvec{[3]} ; u)
        \left( \frac{\sin(u/2)}{u/2} \right)^{
            -\frac{2\epsilon_1+\epsilon_2+\epsilon_3}{\epsilon_1}
            -\frac{\epsilon_1+2\epsilon_2+\epsilon_3}{\epsilon_2}
            -\frac{\epsilon_1+\epsilon_2+2\epsilon_3}{\epsilon_3}} \,,
    \end{split}
    \end{equation}
    where, in the second equality, we used the $n=1$ case of \cref{prop: triple hodge formulas 1}. On the other hand, since $\Mbar_g(X,\beta)$ is proper, \cite{BK05:closedVertex} computes the series as
    \begin{equation}
        \sum_{g \geq 0} u^{2g-2}
            \int_{[\Mbar_g(X,\beta)]^{\vir}} 1
        =
        \frac{1}{u^2}
        \left( \frac{\sin (u/2)}{u/2} \right)^{-2} \,.
    \end{equation}
    Combining the last two equations and solving for the three-point series yields \eqref{eq: 3 lambda 1 3 psi} for $n=3$.

    \begin{figure}
    \centering
        \begin{subfigure}[t]{0.47\textwidth}
            \centering
            \begin{tikzpicture}
                \draw (-1.06,-1.06) -- (0,0) -- (0,1.5);
                \draw (0,0) -- (1.5,0);
                \draw (-2.56,-1.06) -- (-1.06,-1.06) -- (-1.06,-2.56);
                \draw (-1.5,1.5) -- (0,1.5) -- (1.06,2.56);
                \draw (1.5,-1.5) -- (1.5,0) -- (2.56,1.06);
                \node[left] at (-0.7,-0.6) {$C_1$};
                \node[right] at (0,1) {$C_2$};
                \node[below] at (1,0) {$C_3$};
                \node at (0,-0.4) {$\epsilon_1$};
                \node (B) at (-0.25,0.3) {$\epsilon_2$};
                \node (C) at (0.3,0.2) {$\epsilon_3$};
                \node[left] at (-0.1,1.7) {$\epsilon_1+\epsilon_2$};
                \node[above=0.9 of B.east, anchor=east] {$-\epsilon_2$};
                \node[right] at (0.15,1.6) {$\epsilon_2+\epsilon_3$};
                \node[right] at (1.5,-0.4) {$\epsilon_1+\epsilon_3$};
                \node[right] at (1.9,0.3) {$\epsilon_2+\epsilon_3$};
                \node at (1.1,0.2) {$-\epsilon_3$};
                \node at (-0.5,-0.95) {$-\epsilon_1$};
                \node[left] at (-1.2,-1.3) {$\epsilon_1+\epsilon_2$};
                \node[right] at (-1.06,-1.5) {$\epsilon_1+\epsilon_3$};
            \end{tikzpicture}
            \caption{The closed topological vertex.}
            \label{fig: closed top vertex}
        \end{subfigure}
        \hfill
        \begin{subfigure}[t]{0.47\textwidth}
            \centering
            \begin{tikzpicture}
                \draw (-2.56,-1.06) -- (-1.06,-1.06) -- (0,0) -- (0,1.5) -- (-2.56,1.5) -- (-2.56,-1.06);
                \draw (0,0) -- (1.5,0);
                \draw (0,1.5) -- (1.06,2.56);
                \draw (-1.06,-1.06) -- (-1.06,-2.56);
                \draw (-2.56,-1.06) -- (-3.62,-2.12);
                \draw (-2.56,1.5) -- (-3.62,2.56);
                \node at (0,-0.4) {$\epsilon_1$};
                \node (B) at (-0.25,0.3) {$\epsilon_2$};
                \node (C) at (0.3,0.2) {$\epsilon_3$};
                \node[left] at (-0.1,1.7) {$\epsilon_1+\epsilon_2$};
                \node[above=0.9 of B.east, anchor=east] {$-\epsilon_2$};
                \node[right] at (0.15,1.6) {$\epsilon_2+\epsilon_3$};
                \node at (-0.5,-0.95) {$-\epsilon_1$};
                \node[right] at (-1.06,-1.5) {$\epsilon_1+\epsilon_3$};
                \node[left] at (-1.06,-1.5) {$\epsilon_1+\epsilon_2$};
                \node[left] at (-2.56,0.22) {$\vdots$};
                \node[right] at (-2.56,1.7) {$\dots$};
                \node[right] at (-2.56,-0.85) {$\dots$};
            \end{tikzpicture}
            \caption{The local surface $X' = \Tot \, K_{\operatorname{Bl}_{(0,0)}(\bP^1\times\bP^1)}$.}
            \label{fig: local Bl P1xP1 weights}
        \end{subfigure}
        \caption{Toric skeleta used in the proof, together with the tangent weights at the indicated torus fixed points.}
        \label{fig: toric skeleta}
    \end{figure}
    
    A similar argument applies to the case $n=2$. In this case, however, we cannot simply consider the closed vertex $X$ in curve class $[C_1]+[C_2]$, since the corresponding moduli space of stable maps is not proper. Instead, we consider stable maps to $X' = \Tot \, K_{\operatorname{Bl}_{(0,0)}(\bP^1\times\bP^1)}$ in the curve class obtained by pulling back a fiber class under the blow-up, which we denote by $\beta$. The resulting moduli space is proper, and its Gromov--Witten series is
    \begin{equation}
        \label{eq: local Bl P1xP1 GW rhs}
        \sum_{g \geq 0} u^{2g-2}
            \int_{[\Mbar_{g}(X',\beta)]^{\vir}} 1
        =
        -\frac{2}{u^2} \left( \frac{\sin (u/2)}{u/2} \right)^{-2} \,.
    \end{equation}
    The same series can also be computed by localization with respect to the dense torus $\Gm[3]$. Choosing the tangent weights as shown in \cref{fig: local Bl P1xP1 weights}, we obtain
    \begin{multline}
    \label{eq: local Bl P1xP1 GW lhs}
        \sum_{g \geq 0} u^{2g-2}
            \int_{[\Mbar_g(X',\beta)]^{\vir}} 1
        =
        -\frac{2\epsilon_1+2\epsilon_2+\epsilon_3}{u^2(\epsilon_1+\epsilon_2)}
        \left( \frac{\sin(u/2)}{u/2} \right)^{
            \frac{\epsilon_1+\epsilon_3}{\epsilon_2}
            -
            \frac{\epsilon_1+2\epsilon_2+\epsilon_3}{\epsilon_2}
        }
        \\
        +
        \epsilon_1\epsilon_2\epsilon_3^2 \,
        \HodgeH(\epsvec[-1]{[2]} ; \epsvec{[3]} ; u)
        \left(\frac{\sin(u/2)}{u/2}\right)^{
            -\frac{2\epsilon_1+\epsilon_2+\epsilon_3}{\epsilon_1}
            -\frac{\epsilon_1+2\epsilon_2+\epsilon_3}{\epsilon_2}
        } \,.
    \end{multline}
    Here we have again used the $n=1$ case of \cref{prop: triple hodge formulas 1} to evaluate the triple Hodge integrals with a single $\psi$-class. The first term on the right-hand side arises from maps to the fiber through $(\infty,\infty)$, while the second arises from maps whose composition with the blow-up morphism is the fiber through $(0,0)$. Combining the last two equations and solving for the two-point series yields \eqref{eq: 3 lambda 1 3 psi} for $n=2$.
\end{proof}

\section{Triple Hodge integrals from Fock space}
\label{sec: triple Hodge formulas 2}

In this appendix, we prove the remaining formulas for Calabi--Yau triple Hodge integrals required in the main text. Our starting point is the description of triple Hodge integrals as vacuum expectation values of the Fock-space operators introduced by Okounkov and the second author in \cite{OP04:HodgeIntUnknot}. These formulas are initially established for positive integral values of the cotangent-line variables. We obtain the identities below by analytically continuing the corresponding vacuum expectation values and then specializing the variables.

\begin{prop}
    \label{prop: triple hodge formulas 2}
    Suppose that $\epsilon_1+\epsilon_2+\epsilon_3=0$. Then the following identities hold.
    \begin{enumerate}[label=\textup{(\roman*)}]
        \item \emph{One marking:}
        \begin{equation}
            \label{eq: 3H n1 z}
            \HodgeH\bigl(z;\epsvec{[3]};\ri u\bigr)
            =
            -
            \frac{1}{u^2 z}
            \prod_{i=1}^3 \frac{\gamfct{-z\epsilon_i}{u}}{\epsilon_i} \,.
        \end{equation}

        \item \emph{Two markings:}
        \begin{align}
            \HodgeH\bigl(z,\epsilon_1^{-1};\epsvec{[3]};\ri u\bigr)
            &=
            -\frac{z}{u^2(\epsilon_1 z + 1)}
            \frac{
                \cS(u\epsilon_2z)\cS(u\epsilon_3z)
            }{
                \cS\bigl(u(\epsilon_1z+1)\bigr)
            }
            \prod_{i=1}^3 \frac{\gamfct{-z\epsilon_i}{u}}{\epsilon_i} \,,
            \label{eq: 3H n2 z}
            \\
            \HodgeHst\bigl(z,-z;\epsvec{[3]};\ri u\bigr)
            &=
            -\frac{1}{\epsilon_1\epsilon_2\epsilon_3}
            \Psi\left(
                \frac{z^2}{u^2\cS(u)^2}
                \sum_{i=1}^3
                \epsilon_i\cS(2u\epsilon_i z)
            \right) \,.
            \label{eq: 3H n2 zz}
        \end{align}

        \item \emph{Three markings:}
        \begin{align}
            \HodgeH\bigl(z,\epsvec[-1]{[2]};\epsvec{[3]};\ri u\bigr)
            &=
            -\frac{z}{u^2\epsilon_1\epsilon_2}
            \cS(u\epsilon_3z)^2
            \prod_{i=1}^3 \frac{\gamfct{-z\epsilon_i}{u}}{\epsilon_i} \,,
            \label{eq: 3H n3 z}
            \\
            \HodgeH\bigl(z,-z,\epsilon_1^{-1};\epsvec{[3]};\ri u\bigr)
            &=
            \frac{z^2}{u^2\epsilon_1^2\epsilon_2\epsilon_3}
            \frac{\cS(u\epsilon_2z)\cS(u\epsilon_3z)}
            {\cS(u\epsilon_1z)\cS(u)} \,.
            \label{eq: 3H n3 zz}
        \end{align}

        \item \emph{Four markings:}
        \begin{align}
            \HodgeH\bigl(z,\epsvec[-1]{[3]};\epsvec{[3]};\ri u\bigr)
            &=
            -\frac{z^2}{u\epsilon_1 \epsilon_2 \epsilon_3}
            \cS(u)
            \left(
                \kappa(u)
                +
                \sum_{i=1}^3 \kappa(u\epsilon_i z)
            \right)
            \prod_{i=1}^3
                \cS(u\epsilon_i z)
                \frac{\gamfct{-z\epsilon_i}{u}}{\epsilon_i} \,,
            \label{eq: 3H n4 z}
            \\
            \HodgeH\bigl(z,-z,\epsvec[-1]{[2]};\epsvec{[3]};\ri u\bigr)
            &=
            -\frac{z^2}{u^2\epsilon_1^3\epsilon_2^3}
            \frac{\cS(u\epsilon_3z)^3}{\cS(u\epsilon_1z)\cS(u\epsilon_2z)} \,.
            \label{eq: 3H n4 zz}
        \end{align}

        \item \emph{Five markings:}
        \begin{equation}
            \label{eq: 3H n5 zz}
                \HodgeH\bigl(z,-z,\epsvec[-1]{[3]};\epsvec{[3]};\ri u\bigr)
                =
                \frac{z^4}{\epsilon_1^2\epsilon_2^2\epsilon_3^2}
                \cS(u)
                \prod_{i=1}^3\cS(u\epsilon_i z)
                \left[
                    \left(
                        \sum_{i=1}^3
                        \kappa(u\epsilon_i z)
                    \right)^2
                    - \frac{1}{4}
                \right] \,.
        \end{equation}
    \end{enumerate}
\end{prop}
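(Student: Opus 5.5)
Our plan is to start from the Okounkov--Pandharipande Fock-space formula for triple Hodge integrals, which holds for positive integer values of the cotangent-line variables. Under the Calabi--Yau condition $\epsilon_1+\epsilon_2+\epsilon_3=0$, \cite{OP04:HodgeIntUnknot} expresses the generating series $\HodgeH(z_1,\ldots,z_n;\epsvec{[3]};\ri u)$, with $z_j=\mu_j$ positive integers and suitable framing normalization, as a vacuum expectation value
\begin{equation*}
\bigl\langle \textstyle\prod_j \mathcal{A}_{\mu_j}(\cdot) \bigr\rangle ,
\end{equation*}
up to explicit prefactors of the form $\prod_i \epsilon_i^{-1}$, $\prod_j \mu_j^{-1}$ and Gamma-type ratios. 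Here $\mathcal{A}$ denotes the OP operators built from $\mathcal{E}$-operators, and the framing parameter is a ratio of the weights.

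The one-point case (i) is already \cite[Eq.~(2.15)]{OP04:HodgeIntUnknot}. Its integer-$\mu$ prefactor is a ratio of Gamma-type products, and it continues analytically to $\prod_i\gamfct{-z\epsilon_i}{u}$ by \cref{defprop: gamma fct}. This is because the ratio satisfies the same shift equation in $z\epsilon_i$ and is polynomial in the $u$-expansion.

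For the multi-point formulas we would proceed as follows.

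\begin{enumerate}
\item \textbf{Commutator expansion.} Commute the $\mathcal{E}$-operators using $[\mathcal{E}_a(x),\mathcal{E}_b(y)]=\cs(ay-bx)\,\mathcal{E}_{a+b}(x+y)$ and $\langle \mathcal{E}_0(x)\rangle = 1/\cs(x)$. This reduces each vacuum expectation to a finite sum of products of $\cs$-functions and connecting coefficients. For example, the two-point correlator collapses to a single term proportional to $\cs(\cdot)/\cs(u(\mu_1+\mu_2))$ in the relevant variables.
\item \textbf{Rational dependence.} Show that, after factoring out the one-point Gamma-type prefactors, the $u$-expansion coefficients of the remaining expression are rational functions in the $\mu_j$. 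Polynomial or rational identities in the $\mu_j$ valid on all positive integers then extend to arbitrary $z_j$, by the same uniqueness argument as in \cref{defprop: gamma fct}.
\item \textbf{Specialization.} Specialize $z_j=\epsilon_j^{-1}$ or $z_2=-z_1$.
\end{enumerate}

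The specializations of step 3 are where simplifications occur, and they explain why no $\gamma$-factors survive in \eqref{eq: 3H n2 zz}, \eqref{eq: 3H n3 zz}, \eqref{eq: 3H n4 zz}, \eqref{eq: 3H n5 zz}. At $z_j=\epsilon_j^{-1}$ the arguments $-z_j\epsilon_i$ become $-\epsilon_i/\epsilon_j$ and $-1$. Using $\gamfct{-1}{u}=1$ and \cref{lem: gamma fct reflect} with the Calabi--Yau relation, the $\gamma$-factors pair off into $\cS$-factors. For $z_2=-z_1$, the two factors $\gamfct{-z\epsilon_i}{u}\gamfct{z\epsilon_i}{u}$ combine via \cref{lem: gamma fct reflect} into $\cS(uz\epsilon_i)^{-1}$.

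As consistency checks, the cases $z_j=\epsilon_j^{-1}$ for all markings must reproduce \cref{prop: triple hodge formulas 1} at $\sum\epsilon_i=0$, where $p_n=n-2$. These checks, together with the known unstable terms \eqref{eq: Hodge unstable}, fix normalizations and signs.

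The main obstacle is the analytic continuation in the cases $z_2=-z_1$. Here the integer formula does not directly make sense at negative arguments, and the unstable two-point term $\frac{z_1z_2}{z_1+z_2}$ becomes singular. We would therefore work with $\HodgeHst$, subtract the genus-zero pole explicitly, and take the limit $z_2\to -z_1$ after continuation.

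The $\Psi$ operator in \eqref{eq: 3H n2 zz} arises from this limit. The continued expression involves a sum over the intermediate charge $\mu_1+\mu_2\to 0$, whose regularization produces $\zeta(1-2g)=-B_{2g}/(2g)$ coefficients, which is exactly $\Psi$. We would verify this by matching the $u$-expansion to low genus using $\lambda_g$-conjecture-type evaluations, and then argue that both sides satisfy the same rationality in $z$. The remaining identities, such as \eqref{eq: 3H n4 z} with its $\kappa$-sums, come from differentiating $\cs$-products produced when the marking at $\epsilon_j^{-1}$ collides with the $z$-insertion.
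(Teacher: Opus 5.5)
Your framework is the paper's: the Okounkov--Pandharipande identity of \cref{thm: OP unknot}, the recursion of \cref{lem:VEV}, continuation from integer cotangent variables, then specialization, with the one-point case quoted from \cite{OP04:HodgeIntUnknot}. However, the proposal omits everything that turns the continuation into closed formulas, and the substitutes you offer would not work.

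Step 2 cannot be carried out as stated. After \cref{lem:VEV}, the Fock side is a $q$-hypergeometric sum whose length depends on the integers. For two points, after the rescaling \eqref{eq: homog} to weights $(1,a,-(a+1))$, it is $\sum_{k>0}\frac{(q^{(a+1)z_1};q)_k\,(q^{-z_2};q)_k}{(q^{z_1+1};q)_k\,(q^{1-(a+1)z_2};q)_k}\,\cs\bigl(uak(z_1+z_2)\bigr)$, which is finite only because $z_2\in\bZ_{>0}$. So there is no jointly rational expression to continue. The paper instead fixes all but one cotangent variable at positive integers, so that Pochhammer factors truncate the sum, continues in the remaining variable, and specializes it. Under \eqref{eq: homog} the values $\epsilon_1^{-1},\epsilon_2^{-1},\epsilon_3^{-1}$ become $1,\,1/a,\,-1/(a+1)$, which kill or cancel further Pochhammer factors. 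For \eqref{eq: 3H n2 z} and \eqref{eq: 3H n3 z} this leaves a single term. In every other case, though, the surviving variable $z$ is a fixed integer and the sums have length depending on $|z|$, so each formula needs an honest summation:
\begin{itemize}
\item the $q$-Chu--Vandermonde identity \eqref{eq: Chu Vandermonde} for \eqref{eq: 3H n4 z} and for the contributions $\cA,\cB,\cD,\cE$ in the five-point case;
\item cancellation of the residues of the $k=-z$ and $k=1-z$ summands for \eqref{eq: 3H n3 zz};
\item the $O\bigl((z_1+z_2)^2\bigr)$ vanishing of the $k>0$ correlators for \eqref{eq: 3H n4 zz}.
\end{itemize}
None of this is in your plan. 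Your mechanism for \eqref{eq: 3H n4 z} (differentiating $\cs$-products when markings collide) is also not what happens: the $\kappa$'s there come from summing a terminating ${}_2\phi_1$ after specializing $z_1=1/a$.

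For \eqref{eq: 3H n2 zz}, matching low genera and invoking rationality in $z$ proves nothing. Polynomiality of each $u$-coefficient in $z$ only helps once the identity is known for infinitely many $z$ in every genus, and that is the whole content. The $\Psi$ also does not arise from zeta-regularizing a charge sum. The pole along $z_1+z_2=0$ comes solely from the $k=z_2$ summand (through $1/(q^{z_1+1};q)_{z_2}$) and cancels the unstable term $\frac{1}{u^2a(a+1)}\frac{z_1z_2}{z_1+z_2}$. The finite part is then a derivative (L'H\^opital), and the Bernoulli numbers enter through $\frac{d}{dz}\gamfct{z}{u}$ and \eqref{eq: Psi sinh to coth}. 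The leftover finite sum $f(z;a)$ over $0\le k\le -z-1$ is matched with the $\Psi$-expression by showing that both satisfy $f(z;a+z^{-1})-f(z;a)=u(a+1)z\,\kappa\bigl(u(a+1)z\bigr)-uaz\,\kappa(uaz)$; for $f$ this is a telescoping sum. Their difference is therefore a rational function of $a$ with a nonzero period, hence constant, and it vanishes at $a=0$.

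Finally, your specialization bookkeeping has an error. In fact $\gamfct{-1}{u}=1/\cS(u)$, not $1$ (set $x=-1$ in the shift equation), while $\gamfct{-\epsilon_2/\epsilon_1}{u}\,\gamfct{-\epsilon_3/\epsilon_1}{u}=1$ by \cref{lem: gamma fct reflect} and the Calabi--Yau condition. This $1/\cS(u)$ is exactly where the $\cS(u)$ factors in \eqref{eq: 3H n3 zz}, \eqref{eq: 3H n4 z} and \eqref{eq: 3H n5 zz} come from. Consistency checks against \cref{prop: triple hodge formulas 1} cannot replace deriving them.
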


\Cref{eq: 3H n1 z} is equivalent, after translating conventions, to the formula proved by Okounkov and the second author in \cite[Eq.~(2.15)]{OP04:HodgeIntUnknot}.

\subsection{Preliminaries}
To prove the formulas, we use an identification of triple Hodge integrals with vacuum expectation values of the Fock-space operators
\begin{equation}
    \label{eq: defn A op}
    \mathsf{A}(z,a,u)
    =
    \frac{\gamfct{-z,-az,(a+1)z}{u}}{u^2 a(a+1) z} \,
    \cs(uaz)
    \sum_{k\in \bZ}
        \frac{(q^{(a+1)z};q)_{k}}{(q^{z+1};q)_{k}} \,
        \mathcal{E}_k(uaz)
\end{equation}
acting on Fock space. Here we introduce the shorthand
\begin{equation}
    \gamfct{x_1,\ldots,x_n}{u}
    \coloneqq
    \prod_{i=1}^n \gamfct{x_i}{u}
\end{equation}
and let $(q^{a};q)_k$ denote the symmetrized $q$-Pochhammer symbol
\begin{equation}
    (q^{a};q)_k
    =
    \begin{cases}
        \cs(ua)\cs(u(a+1)) \cdots \cs(u(a+k-1))
        & k\geq 0 \,, \\[0.3em]
        \dfrac{1}{\cs(u(a-1))\cs(u(a-2)) \cdots \cs(u(a+k))}
        & k < 0 \,,
    \end{cases}
    \qquad
    q \coloneqq \re^u \,.
\end{equation}
Finally, $\mathcal{E}_k(z)$ denotes the Fock-space operator whose vacuum expectation values of products can be computed recursively.

\begin{lem}
    \label{lem:VEV}
    \cite[\S3.3]{OP06:GW-H}
    The connected vacuum expectation value $\langle \mathcal{E}_{k_1}(z_1) \cdots \mathcal{E}_{k_n}(z_n) \rangle^{\circ}$ vanishes for $n>1$ whenever $k_1\leq 0$, $k_n\geq 0$, or $\sum_j k_j\neq 0$. If $k_1 > 0$, it satisfies the recursion
    \begin{equation}
        \left\langle \mathcal{E}_{k_1}(z_1) \cdots \mathcal{E}_{k_n}(z_n) \right\rangle^{\circ}
        =
        \sum_{i=2}^n
        \cs\Bigl(
            \det\begin{psmallmatrix} k_1 & k_i \\ z_1 & z_i \end{psmallmatrix}
        \Bigr)
        \left\langle
            \mathcal{E}_{k_2}(z_2) \cdots
            \mathcal{E}_{k_i+k_1}(z_i+z_1) \cdots
            \mathcal{E}_{k_n}(z_n)
        \right\rangle^{\circ} \,,
    \end{equation}
    with initial condition $\langle \mathcal{E}_{k}(z) \rangle^{\circ} = \frac{\delta_{k,0}}{\cs(z)}$.
\end{lem}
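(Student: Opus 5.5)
The plan is to derive the lemma from three standard facts about the operators $\mathcal{E}_k(z)$ on the fermionic Fock space, as in \cite{OP06:GW-H}:
\begin{enumerate}
    \item they satisfy the commutation relation
    \begin{equation*}
        [\mathcal{E}_a(z),\mathcal{E}_b(w)]
        =
        \cs\Bigl(\det\begin{psmallmatrix} a & b \\ z & w\end{psmallmatrix}\Bigr)\,
        \mathcal{E}_{a+b}(z+w) \,;
    \end{equation*}
    \item $\mathcal{E}_k(z)$ has energy $-k$, so that $\mathcal{E}_k(z)|0\rangle=0$ for $k>0$ and $\langle 0|\mathcal{E}_k(z)=0$ for $k<0$;
    \item the vacuum is an eigenvector of every $\mathcal{E}_0(z)$, with $\mathcal{E}_0(z)|0\rangle=\cs(z)^{-1}|0\rangle$ and $\langle 0|\mathcal{E}_0(z)=\cs(z)^{-1}\langle 0|$.
\end{enumerate}
The third fact is where the regularization of the constant term of $\mathcal{E}_0$ enters. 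I would recall these facts from the definition of $\mathcal{E}_k(z)$ as a regularized sum of $\re^{z(j-k/2)}\,{:}\psi_{j-k}\psi_j^*{:}$ rather than reprove them. The initial condition $\langle\mathcal{E}_k(z)\rangle^{\circ}=\delta_{k,0}/\cs(z)$ then follows at once from (ii) and (iii).

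For the vanishing statements, take $n>1$. If $\sum_j k_j\neq0$, the product changes energy, so every full expectation value with a subset of the operators and nonzero total charge vanishes. The connected correlator is an inclusion–exclusion combination of full correlators of subsets, and each term contains a factor with nonzero total charge, so it vanishes. If $k_1<0$ or $k_n>0$, the full correlator is zero by (ii). If $k_1=0$ or $k_n=0$, it factorizes as $\cs(z_1)^{-1}\langle\cdots\rangle$ by (iii). In both situations operator $1$ (resp.\ $n$) decouples from every subset correlator, and the standard fact that connected correlators vanish when one insertion decouples finishes the argument. Since (ii) also covers $k_1<0$ and $k_n>0$, this gives vanishing for $k_1\le0$ or $k_n\ge0$.

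For the recursion with $k_1>0$, I would move $\mathcal{E}_{k_1}(z_1)$ to the right through the other operators. It annihilates $|0\rangle$, so
\begin{equation*}
    \langle \mathcal{E}_{k_1}(z_1)\cdots\mathcal{E}_{k_n}(z_n)\rangle
    =
    \sum_{i\ge2}\cs\Bigl(\det\begin{psmallmatrix} k_1 & k_i \\ z_1 & z_i\end{psmallmatrix}\Bigr)
    \langle\cdots\mathcal{E}_{k_i+k_1}(z_i+z_1)\cdots\rangle \,.
\end{equation*}
Applying the same identity to every subset containing operator $1$ and taking the logarithm of the generating function (equivalently, summing over set partitions) shows that the full-correlator relation descends to connected correlators. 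The commutator merges operator $1$ into block $i$, so a connected correlator of the merged set corresponds exactly to connected configurations of the original set. The main obstacle I expect is this combinatorial step of passing from full to connected correlators. I would handle it by induction on $n$, checking that the partitions in which operator $1$ forms its own block contribute zero, since $\langle\mathcal{E}_{k_1}\rangle=0$ for $k_1\neq0$.
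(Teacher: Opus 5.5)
Your proposal is correct. The paper gives no argument for this lemma and simply quotes it from \cite[\S3.3]{OP06:GW-H}. You instead derive it from three structural facts: the commutator, whose central term is absorbed into the constant $\delta_{k,0}/\cs(z)$ of $\mathcal{E}_0(z)$; the energy grading; and the vacuum eigenvalue $\cs(z)^{-1}$ of $\mathcal{E}_0(z)$. This is the standard way the cited result is obtained, and your conventions match the lemma, since $\det$ of $(a,b;z,w)$ is $aw-bz$.

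Your treatment of the vanishing cases is sound and can be phrased uniformly. For $k_1\le 0$, one has $\langle \mathcal{E}_{k_1}(z_1)\,W\rangle=\langle\mathcal{E}_{k_1}(z_1)\rangle\langle W\rangle$ for every ordered sub-product $W$ of the remaining operators. When $k_1<0$ this holds trivially, because both sides vanish. A cumulant containing such an insertion together with at least one other insertion therefore vanishes. The case $k_n\ge 0$ is symmetric, using the ket.

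The step you flag as the main obstacle also goes through exactly as you plan. Let $W^{(i)}$ denote the word with $\mathcal{E}_{k_i}(z_i)$ replaced by $\mathcal{E}_{k_i+k_1}(z_i+z_1)$.
\begin{itemize}
\item Expand $\langle \mathcal{E}_{k_1}(z_1)W\rangle$ by the moment--cumulant formula, grouping terms by the block $\{1\}\cup T$ that contains the first insertion.
\item Expand $\sum_i \cs(k_1z_i-k_iz_1)\langle W^{(i)}\rangle$ the same way, grouping by the block $C$ that contains the merged insertion.
\item In both expansions the complementary factors are the same unmodified full correlators of the remaining insertions.
\item The term with $T=\emptyset$ vanishes because $\langle\mathcal{E}_{k_1}(z_1)\rangle=0$ for $k_1\neq0$.
\item Induction on the length of $W$ matches all terms with $T$ a proper nonempty subset, leaving precisely the connected recursion.
\end{itemize}

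Compared with the bare citation, your route makes visible that the recursion is a purely formal consequence of two facts: $\mathcal{E}_{k_1}(z_1)|0\rangle=0$ for $k_1>0$, and the fact that $[\mathcal{E}_{k_1}(z_1),\,\cdot\,]$ acts as a derivation on words.
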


The following result of Okounkov and the second author relates vacuum expectation values of $\msf{A}$-operators to triple Hodge integrals.

\begin{thm}
    \label{thm: OP unknot}
    \cite[Thm.~2]{OP04:HodgeIntUnknot}
    If $z_1,\ldots,z_n$ are positive integers, then
    \begin{equation}
        \HodgeH\left(\zvec{[n]};1,a,-(a+1);\ri u\right)
        =
        \left\langle
            \prod_{j=1}^n \mathsf{A}\bigl(z_j,a,u\bigr)
        \right\rangle^{\circ} \,.
    \end{equation}
\end{thm}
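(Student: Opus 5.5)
This statement is cited from \cite[Thm.~2]{OP04:HodgeIntUnknot}, so within the paper it is simply assumed. If I had to reprove it, I would follow the geometric route of that paper rather than try to manipulate the Hodge integrals directly. The plan is to realize the left-hand side as the localization contribution of a single torus-fixed vertex in a relative Gromov--Witten problem on a local curve, and to evaluate the same problem a second time with the operator formalism for equivariant Gromov--Witten theory of $\bP^1$ from \cite{OP06:GW-H}, of which \cref{lem:VEV} is the combinatorial core.

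\textbf{Step 1: the geometric setup.} Take the cap $\bP^1$ relative to $\infty$, thickened to a local Calabi--Yau threefold $\Tot(\cL_1\oplus\cL_2\to\bP^1)$, with a torus acting so that the three tangent weights at $0$ are proportional to $(1,a,-(a+1))$. Consider degree $d=\sum_j z_j$ relative maps with ramification profile $(z_1,\ldots,z_n)$ over $\infty$.

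\textbf{Step 2: localization.} The fixed loci consist of contracted curves over $0$ joined to $n$ Galois covers $\bP^1\to\bP^1$ of degrees $z_j$. The node-smoothing factor is $1/(z_j^{-1}-\psi_j)$ after rescaling, and the Hodge bundle contributes $\Lambda_g(1)\Lambda_g(a)\Lambda_g(-a-1)$. The edge weights supply the prefactors $\gamma(-z,-az,(a+1)z;u)$, together with the factor $1/(u^2a(a+1)z)$ and $\cs(uaz)$, since for integer $z$ the $\gamma$-ratios are finite products of $\cS$-factors by \cref{lem: gamma fct reflect}. The connected generating series is therefore $\HodgeH(\zvec{[n]};1,a,-(a+1);\ri u)$ times explicit edge factors. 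This is where the integrality of the $z_j$ is used.

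\textbf{Step 3: the operator side.} Evaluate the same relative invariants by degenerating and applying the GW/Hurwitz operator formalism. The relative cap with a $\Lambda$-twist becomes an operator built from $\mathcal E_k(uaz)$. The coefficients $(q^{(a+1)z};q)_k/(q^{z+1};q)_k$ record the weights of the level/framing change, and the sum over $k\in\bZ$ arises from expanding the completed-cycle operators. Products of such caps give connected vacuum expectation values, which are computed by the commutator recursion of \cref{lem:VEV}. Comparing the two evaluations and dividing out the edge factors yields the identity.

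\textbf{Main obstacle.} The hardest step is the precise operator identification in Step 3. One must show that the $\Lambda$-twisted equivariant cap is exactly $\mathsf A(z,a,u)$, including the $\gamma$ normalizations and the infinite sum over $k$. I would check this first at $n=1$ against the one-marking formula \eqref{eq: 3H n1 z}, which is the Mariño--Vafa-type result. I would then use the fact that both sides satisfy the same cut-and-join / degeneration equations in the $z_j$ to extend the match to $n>1$.
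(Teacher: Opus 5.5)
Treating the statement as imported is exactly what the paper does: it quotes \cite[Thm.~2]{OP04:HodgeIntUnknot}, uses it only for $z_j\in\bZ_{>0}$, and analytically continues in \cref{sec: triple Hodge formulas 2}. The paper gives no proof of its own.

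The reproof you sketch, however, has a genuine gap in Step~3, which carries all the content. The GW/Hurwitz correspondence of \cite{OP06:GW-H} handles descendent invariants of a target curve. It does not handle relative invariants twisted by $e\bigl(-\mathbf{R}\pi_*f^*(\cL_1\oplus\cL_2)\bigr)$ with generic weights, and you give no mechanism that turns such a twisted cap into $\mathcal{E}$-operators. For generic weights, a closed-form evaluation of this cap is essentially the one-leg vertex, i.e.\ the Mari\~no--Vafa formula, which is the statement being proved.

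Step~2 is also inaccurate. Localization on relative maps to $(\bP^1,\infty)$ has fixed loci with nontrivial rubber over $\infty$, and you omit them; they can be tamed, e.g.\ by choosing anti-diagonal fibre weights at $\infty$ so that Mumford's relation removes the Hodge classes there. Moreover, genus-zero edge contributions have no nontrivial $u$-dependence. So, although you are right that for integer $z$
\[
    \gamfct{-z,-az,(a+1)z}{u}
    =
    \frac{\prod_{k=1}^{z-1}\cS\bigl(u(az+k)\bigr)}{\prod_{k=1}^{z}\cS(uk)}
\]
is a finite product, it is still a nontrivial series in $u$. Neither it nor $\cs(uaz)$ can be an edge factor; these belong to the operator side.

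Your proposed way around the main obstacle also fails, because there is no recursion in the variables $z_j$. The known cut-and-join equation (Liu--Liu--Zhou) is an evolution equation in the framing parameter $a$. It acts on the generating function summed over all ramification profiles of fixed total degree, and its cut terms increase the number of parts. Its initial data must therefore be given at one special framing for all profiles simultaneously. A natural choice is $a=0$: after removing the overall factor $-1/(a(a+1))$, the triple Hodge class becomes $\lambda_g$ by Mumford's relation, and the $\lambda_g$-formula applies. By contrast, checking the one-point case \eqref{eq: 3H n1 z} for all $a$ cannot seed any induction in $n$. To turn the sketch into a proof you would need either this framing argument, including a proof that $\langle\prod_j\mathsf{A}(z_j,a,u)\rangle^{\circ}$ satisfies the same equation, or the argument of \cite{OP04:HodgeIntUnknot} itself.
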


By homogeneity, the Hodge integrals in \cref{prop: triple hodge formulas 2} are related to the above ones by the change of variables
\begin{equation} \label{eq: homog}
    \HodgeH\bigl(
        \zvec{[n]};\epsilon_1,\epsilon_2,\epsilon_3;\ri u
    \bigr)
    =
    \epsilon_1^{-2n}\,
    \HodgeH\bigl(
        \epsilon_1\zvec{[n]};1,a,-(a+1);\ri u
    \bigr) \,,
    \qquad
    a = \frac{\epsilon_2}{\epsilon_1} \,,
\end{equation}
Hence, to deduce \cref{prop: triple hodge formulas 2}, it remains only to treat the cases with at least two markings, since the one-point formula is already contained in \cite{OP04:HodgeIntUnknot}. For the remaining cases, we analytically continue the identity in \cref{thm: OP unknot} to non-integral values of the variables $\boldsymbol{z}$ and evaluate the resulting continuation at the specializations appearing in the \namecref{prop: triple hodge formulas 2}.

\subsection{Two markings}
To prove \eqref{eq: 3H n2 z} and \eqref{eq: 3H n2 zz}, we evaluate the connected vacuum expectation value representing $\HodgeH(z_1,z_2;1,a,-(a+1);\ri u) = \langle \sfA(z_1,a,u)\, \sfA(z_2,a,u) \rangle^{\circ}$, initially for $z_1,z_2\in\bZ_{>0}$. Substituting the definition \eqref{eq: defn A op} of the $\sfA$-operators gives
\begin{multline}
    \HodgeH\bigl(z_1,z_2;1,a,-(a+1);\ri u\bigr)
    =
    \frac{
        \prod_{j=1}^2
        \gamfct{-z_j,-az_j,(a+1)z_j}{u}
    }
    {u^4 a^2(a+1)^2 z_1z_2}
    \,
    \cs(uaz_1)\,
    \cs(uaz_2) \\
    \times
    \sum_{k_1,k_2\in\bZ}
    \frac{
        (q^{(a+1)z_1};q)_{k_1}
        (q^{(a+1)z_2};q)_{k_2}
    }{
        (q^{z_1+1};q)_{k_1}
        (q^{z_2+1};q)_{k_2}
    }
    \left\langle
        \cE_{k_1}(uaz_1)\,
        \cE_{k_2}(uaz_2)
    \right\rangle^{\circ} \,.
\end{multline}
By \cref{lem:VEV},
\begin{equation}
    \left\langle
        \cE_{k_1}(uaz_1)\,
        \cE_{k_2}(uaz_2)
    \right\rangle^{\circ}
    =
    \begin{cases}
        \dfrac{\cs\bigl(ua k_1(z_1+z_2)\bigr)}{\cs\bigl(ua(z_1+z_2)\bigr)}
        &
        k_1+k_2=0,\quad k_1>0,
        \\
        0
        & \text{otherwise}.
    \end{cases}
\end{equation}
Therefore, using $(q^a;q)_{-k} = \frac{(-1)^k}{(q^{1-a};q)_k}$, we obtain
\begin{multline}
\label{eq: Hodge vs q hypergeom n2}
    \HodgeH\bigl(z_1,z_2;1,a,-(a+1);\ri u\bigr)
    =
    \frac{
        \prod_{j=1}^2
        \gamfct{-z_j,-az_j,(a+1)z_j}{u}
    }{u^4 a^2(a+1)^2 z_1z_2}
    \,
    \frac{\cs(uaz_1)\,\cs(uaz_2)}
         {\cs\bigl(ua(z_1+z_2)\bigr)}
    \\
    \times
    \sum_{k>0}
    \frac{
        (q^{(a+1)z_1};q)_k
        (q^{-z_2};q)_k
    }{
        (q^{z_1+1};q)_k
        (q^{1-(a+1)z_2};q)_k
    }
    \cs\bigl(uak(z_1+z_2)\bigr) \,.
\end{multline}

\subsubsection{The specialization \texorpdfstring{$z_2=1$}{z2=1}.}
To prove formula \eqref{eq: 3H n2 z}, we specialize \cref{eq: Hodge vs q hypergeom n2} to $z_2=1$. The factor $(q^{-1};q)_k$ vanishes for $k\geq2$, so the sum reduces to the single term $k=1$, and we obtain
\begin{equation}
    \HodgeH\left(z,1;1,a,-(a+1);\ri u\right)
    =
    \frac{
        \gamfct{-z,-az,(a+1)z}{u}
    }{
        u^3a^2(a+1)^2z
    }
    \frac{
        \cs(uaz)\cs\bigl(u(a+1)z\bigr)
    }{
        \cs\bigl(u(z+1)\bigr)
    } \,,
\end{equation}
where we used $\gamfct{-1,-a,a+1}{u}=u/\cs(u)$ to simplify the second $\gamma$-factor. By the homogeneity relation \eqref{eq: homog}, the resulting identity is precisely \eqref{eq: 3H n2 z}.

\subsubsection{The \texorpdfstring{specialization $z_1+z_2=0$}{anti-diagonal specialization}.}
Unlike the previous specialization, this case requires additional care because the two-point Hodge series contains the unstable genus-zero contribution $1/(z_1+z_2)$. Consequently, the specialization $z_1+z_2=0$ can only be taken after removing this pole.

To explicitly evaluate the two-point Hodge series, fix $z_2 \in \bZ_{>0}$. Since the factor $(q^{-z_2};q)_k$ vanishes for $k > z_2$, the sum on the right-hand side of \cref{eq: Hodge vs q hypergeom n2} is finite. Hence it is a formal Laurent series in $u$ whose coefficients are rational functions of $z_1$. By definition, the same is true for the left-hand side. Since positive integers are Zariski dense, identity \eqref{eq: Hodge vs q hypergeom n2} therefore extends to arbitrary values of $z_1$.

We now specialize to $z_1+z_2=0$. Both sides have a simple pole in this limit. On the left-hand side, however, the pole is entirely due to the unstable genus-zero contribution:
\begin{equation}
    \HodgeH\bigl(z_1,z_2;1,a,-(a+1);\ri u\bigr)
    =
    \frac{1}{u^2 a (a+1)}
    \frac{z_1z_2}{z_1+z_2}
    +
    \HodgeHst\bigl(z_1,z_2;1,a,-(a+1);\ri u\bigr) \,.
\end{equation}
It therefore remains to extract the finite part of the right-hand side of \cref{eq: Hodge vs q hypergeom n2} at $z_1+z_2=0$, which is precisely $\HodgeHst(z,-z;1,a,-(a+1);\ri u)$.

The right-hand side of \cref{eq: Hodge vs q hypergeom n2} terminates at $k=z_2$ and develops a simple pole along $z_1+z_2=0$. This pole arises solely from the summand with $k=z_2$, whose denominator contains the factor
\begin{equation}
    \frac{1}{(q^{z_1+1};q)_{z_2}}
    =
    \frac{1}{\cs(u(z_1+1))\cdots\cs(u(z_1+z_2))} \,.
\end{equation}
All remaining summands are regular along $z_1+z_2=0$. It is therefore convenient to separate the regular part $\mathsf{h}_{\mr{r}}$ and the polar part $\mathsf{h}_{\mr{p}}$ by writing
\begin{equation}
\begin{split}
    \frac{\mathsf{h}_{\mr{r}}(z_1,z_2)}{
        \frac{\prod_{j=1}^2 \gamfct{-z_j,-az_j,(a+1)z_j}{u}}{u^4 a^2(a+1)^2 z_1z_2}
        \frac{\cs(uaz_1)\cs(uaz_2)}{\cs(ua(z_1+z_2))}
    }
    &\coloneqq
    \sum_{k=0}^{z_2-1}
    \frac{
        (q^{(a+1)z_1};q)_k
        (q^{-z_2};q)_k
    }{
        (q^{z_1+1};q)_k
        (q^{1-(a+1)z_2};q)_k
    }
    \cs\bigl(uak(z_1+z_2)\bigr) \,,
    \\
    \frac{\mathsf{h}_{\mr{p}}(z_1,z_2)}{
        \frac{\prod_{j=1}^2 \gamfct{-z_j,-az_j,(a+1)z_j}{u}}{u^4 a^2(a+1)^2 z_1z_2}
        \frac{\cs(uaz_1)\cs(uaz_2)}{\cs(ua(z_1+z_2))}
    }
    &\coloneqq
    \frac{
        (q^{(a+1)z_1};q)_{z_2}
        (q^{-z_2};q)_{z_2}
    }{
        (q^{z_1+1};q)_{z_2}
        (q^{1-(a+1)z_2};q)_{z_2}
    }
    \cs\bigl(uaz_2(z_1+z_2)\bigr) .
\end{split}
\end{equation}
We suppress the dependence on $a$ and $u$ for ease of notation. Thus, it remains to evaluate
\begin{equation}
    \label{eq: 3H n2 zz to reg sing}
    \HodgeHst(z,-z;1,a,-(a+1);\ri u)
    =
    \mathsf{h}_{\mr{r}}(z,-z)
    +
    \lim_{z_1\to z}
    \left(
        \mathsf{h}_{\mr{p}}(z_1,-z)
        +
        \frac{1}{u^2 a (a+1)}
        \frac{z_1z}{z_1-z}
    \right) \,.
\end{equation}
The regular contribution simplifies to
\begin{equation}
    \label{eq: h regular}
    \mathsf{h}_{\mr{r}}(z,-z)
    =
    \frac{z}{u a(a+1)} \cs(uaz)
    \sum_{k=0}^{-z-1}
        \frac{k}{\cs(uz+uk) \cs(u(a+1)z+uk)} \,,
\end{equation}
where we used the identities $\frac{\cs(x\delta)}{\cs(y\delta)} \to \frac{x}{y}$ as $\delta \to 0$ and
\begin{equation}
    \label{eq: gamma op App B}
    \gamfct{-z,-az,(a+1)z}{u} \,
    \gamfct{z,az,-(a+1)z}{u}
    =
    \frac{1}{\cS(uz)\cS(uaz)\cS(u(a+1)z)} \,,
\end{equation}
which is an immediate consequence of \cref{lem: gamma fct reflect}.

The polar contribution is slightly more involved. We first notice that
\begin{equation}
\label{eq: h Hop}
    \lim_{z_1\to z}
    \left(
        \mathsf h_{\mr p}(z_1,-z)
        +
        \frac{1}{u^2a(a+1)} \frac{z_1z}{z_1-z}
    \right)
    =
    \frac{z}{u^2a(a+1)}
    \frac{\mr d}{\mr d z_1} \Bigg|_{z_1=z}
    F(z_1,-z) \,,
\end{equation}
where
\begin{multline}
    F(z_1,z_2)
    \coloneqq
    \left( \prod_{j=1}^2 \gamfct{-z_j,-az_j,(a+1)z_j}{u} \right)
        \frac{
            \cS(uaz_1)
        }{
            \cs\bigl(ua(z_1+z_2)\bigr)
        } \\
        \times
        \frac{
            \cS(uz_1) \cS\bigl(u(a+1)z_2\bigr)
        }{
            \cS\bigl(u(z_1+z_2)\bigr)
        }
        \frac{
            (q^{(a+1)z_1};q)_{z_2}
            (q^{-z_2};q)_{z_2}
        }{
            (q^{z_1};q)_{z_2}
            (q^{-(a+1)z_2};q)_{z_2}
        }
        \cs\bigl(uaz_2(z_1+z_2)\bigr) \,.
\end{multline}
Indeed, a direct calculation shows that $\mathsf h_{\mr p}(z_1,-z) = \frac{1}{u^2a(a+1)} \frac{z_1}{z_1-z} F(z_1,-z)$, while $F(z_1,-z)$ tends to $-z$ as $z_1 \to z$. Thus, \cref{eq: h Hop} follows from L'H{\^o}pital's rule.

The derivative in \eqref{eq: h Hop} can be evaluated using \cref{eq: Psi sinh to coth} and
\begin{equation}
    \frac{\mr d}{\mr d z}\gamfct{z}{u}
    =
    \gamfct{z}{u} \,
    u^2 \,
    \Psi\left(
        \frac{
            2\mathqoppa\bigl(u(1-2z)\bigr)
        }{
            u^2 \cS(u)
        }
        -
        \frac{1}{u^2}
    \right) \,,
\end{equation}
where $\mathqoppa(x) \coloneqq \frac{1}{2}\cosh\frac{x}{2}$. After simplification, this gives
\begin{multline}
    \label{eq: h pole antidiagonal}
    \lim_{z_1\to z}
    \left(
        \mathsf h_{\mr p}(z_1,-z)
        +
        \frac{1}{u^2a(a+1)}
        \frac{z_1z}{z_1-z}
    \right)
    \\
    =
    \frac{2z^2}{a(a+1)}
    \Psi\left(
        \frac{
            \mathqoppa\bigl(u(1-2z)\bigr)
            +a\mathqoppa\bigl(u(1-2az)\bigr)
            -(a+1)\mathqoppa\bigl(u(1-2(a+1)z)\bigr)
        }{
            u^2 \cS(u)
        }
    \right)
    \\
    +
    \frac{z^2}{ua(a+1)}
    \sum_{k=0}^{-z-1}
    \Bigl(
        -(a+1)\kappa\bigl(u((a+1)z+k)\bigr)
        +
        \kappa\bigl(u(z+k)\bigr)
    \Bigr) \,.
\end{multline}
Combining this last identity with \eqref{eq: 3H n2 zz to reg sing} and \eqref{eq: h regular}, we see that in order to prove \eqref{eq: 3H n2 zz} it suffices to establish the following identity:
\begin{multline}
    \label{eq: 3H n2 zz reg final relation}
        \sum_{k=0}^{-z-1}
        \left(
            uz\,
            \kappa\bigl(u(z+k)\bigr)
            -
            uz(a+1)\,
            \kappa\bigl(u((a+1)z+k)\bigr)
            +
            \frac{
                uk\,\cs(uaz)
            }{
                \cs(u(z+k))
                \cs(u((a+1)z+k))
            }
        \right)
        \\
        =
        u^2 \Psi\Bigg(
            \frac{1}{u^2 \cS(u)}
            \Bigg(
                -2z\mathqoppa\bigl(u(1-2z)\bigr)
                -2az\mathqoppa\bigl(u(1-2az)\bigr)
                +2(a+1)z\mathqoppa\bigl(u(1-2(a+1)z)\bigr)
                \\
                +
                \frac{
                    \cs(2uz)+\cs(2auz)-\cs(2(a+1)uz)
                }{
                    2\cs(u)
                }
            \Bigg)
        \Bigg) \,.
\end{multline}
Let us denote the left-hand side of \eqref{eq: 3H n2 zz reg final relation} by $f(z;a)$. We claim that it suffices to establish the following functional equation.

\begin{lem}
    \label{lem: func eq 3H n2 zz reg}
    We have
    \begin{equation}
    \label{eq: func eq 3H n2 zz reg}
        f(z;a+z^{-1})-f(z;a)
        =
        u(a+1)z\,\kappa\bigl(u(a+1)z\bigr)
        -
        uaz\,\kappa(auz) \,.
    \end{equation}
\end{lem}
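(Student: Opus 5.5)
The plan is a direct telescoping computation. Since the sum defining $f(z;a)$ runs over $0\le k\le -z-1$, we work with $z=-m$ for a positive integer $m$. The identity is then checked for each such $z$ as an identity of formal series in $u$ with coefficients meromorphic in $a$, just as in the surrounding argument.

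\emph{Step 1: rewrite $f$ using only $\kappa$.} Set $b\coloneqq (a+1)z$, so that $az=b-z=b+m$. The substitution $a\mapsto a+z^{-1}$ is then simply $b\mapsto b+1$. The key tool is the elementary addition formula
\begin{equation*}
    \frac{\cs(x-y)}{\cs(x)\,\cs(y)}=\kappa(y)-\kappa(x)\,,
\end{equation*}
which follows from $\coth\tfrac{y}{2}-\coth\tfrac{x}{2}=\sinh\tfrac{x-y}{2}\big/\big(\sinh\tfrac{x}{2}\sinh\tfrac{y}{2}\big)$. Apply it with $x=u(b+k)$ and $y=u(z+k)$, whose difference is $x-y=uaz$. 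The third summand of $f$ becomes $uk\bigl(\kappa(u(z+k))-\kappa(u(b+k))\bigr)$, and hence
\begin{equation*}
    f(z;a)=\sum_{k=0}^{m-1}\Bigl(u(z+k)\,\kappa\bigl(u(z+k)\bigr)-u(b+k)\,\kappa\bigl(u(b+k)\bigr)\Bigr)\,.
\end{equation*}
In this form $f$ no longer involves $a$ except through $b$.

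\emph{Step 2: shift and telescope.} Write $g(x)\coloneqq x\,\kappa(x)$. The first part $\sum_k g(u(z+k))$ does not depend on $b$, so it cancels in the difference. The second part is $-\sum_{k=0}^{m-1} g(u(b+k))$. Replacing $b$ by $b+1$ shifts the summation window to $k=1,\dots,m$, so the difference telescopes to
\begin{equation*}
    f(z;a+z^{-1})-f(z;a)=g(ub)-g\bigl(u(b+m)\bigr)=u(a+1)z\,\kappa\bigl(u(a+1)z\bigr)-uaz\,\kappa(uaz)\,,
\end{equation*}
where we used $b+m=az$. This is exactly \eqref{eq: func eq 3H n2 zz reg}.

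\emph{Main obstacle.} The only genuinely nontrivial point is Step 1, that is, recognising that the mixed term $uk\,\cs(uaz)/(\cs\cdot\cs)$ combines with the two $\kappa$-sums into $\sum_k\bigl(g(u(z+k))-g(u(b+k))\bigr)$. One must check that the coefficients match: $uz+uk$ in front of $\kappa(u(z+k))$, and $-(ub+uk)$ in front of $\kappa(u(b+k))$. The remaining care is bookkeeping. The argument must be run within the range where $z$ is a negative integer, so that the sum is finite. We also need that all $\cs(u(b+k))$ and $\cs(u(z+k))$ appearing are invertible as formal Laurent series. This holds for generic $a$, since $z+k\neq0$ for $0\le k\le m-1$.
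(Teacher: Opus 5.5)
Your proof is correct and matches the paper's proof. The paper uses the same identity $\cs(x-y)/(\cs(x)\cs(y))=\kappa(y)-\kappa(x)$ with $x=u((a+1)z+k)$, $y=u(z+k)$. This rewrites the summand as $u(z+k)\kappa(u(z+k))-u((a+1)z+k)\kappa(u((a+1)z+k))$, and the shift $a\mapsto a+z^{-1}$ then telescopes the $a$-dependent part exactly as in your Step 2.
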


Before proving the lemma, let us explain why \eqref{eq: func eq 3H n2 zz reg} implies \eqref{eq: 3H n2 zz reg final relation}. A direct calculation shows that the right-hand side of \eqref{eq: 3H n2 zz reg final relation} satisfies the same functional equation. Hence, the difference between the two sides is invariant under the translation $a \mapsto a+z^{-1}$. Both sides are power series in $u$ whose coefficients are rational functions of $a$. Since a rational function with a nonzero period is constant, their difference is independent of $a$. Finally, both sides vanish at $a=0$, and therefore \eqref{eq: 3H n2 zz reg final relation} follows. It remains to prove \cref{lem: func eq 3H n2 zz reg}.

\begin{proof}[Proof of \cref{lem: func eq 3H n2 zz reg}]
    We first observe the elementary identity $\frac{\cs(x-y)}{\cs(x)\cs(y)} = \kappa(y)-\kappa(x)$, which follows immediately from $\kappa(x)=\frac{1}{2}\coth\frac{x}{2}$. Taking $x=u((a+1)z+k)$ and $y=u(z+k)$, we obtain
    \begin{equation}
        \frac{\cs(uaz)}{\cs(u(z+k))\cs(u((a+1)z+k))}
        =
        \kappa\bigl(u(z+k)\bigr)
        -
        \kappa\bigl(u((a+1)z+k)\bigr) \,.
    \end{equation}
    Thus, the summand defining $f(z;a)$ simplifies to
    \begin{multline}
        -(a+1)uz\, \kappa\bigl(u((a+1)z+k)\bigr)
        +
        uz\, \kappa\bigl(u(z+k)\bigr)
        +
        \frac{
            uk\,\cs(uaz)
        }{
            \cs(u(z+k)) \cs(u((a+1)z+k))
        }
        \\
        =
        u(z+k)\, \kappa\bigl(u(z+k)\bigr)
        -
        u((a+1)z+k)\, \kappa\bigl(u((a+1)z+k)\bigr) \,.
    \end{multline}
    Consequently,
    \begin{equation}
        f(z;a)
        =
        \sum_{k=0}^{-z-1}
        \Bigl(
            u(z+k)\,
            \kappa\bigl(u(z+k)\bigr)
            -
            u((a+1)z+k)\,
            \kappa\bigl(u((a+1)z+k)\bigr)
        \Bigr) \,.
    \end{equation}
    The first term is independent of $a$, and therefore
    \begin{multline}
        f(z;a+z^{-1})-f(z;a)
        =
        \sum_{k=0}^{-z-1}
        \Big(
            u((a+1)z+k)\,
            \kappa\bigl(u((a+1)z+k)\bigr)
        \\
            -
            u((a+1)z+k+1)\,
            \kappa\bigl(u((a+1)z+k+1)\bigr)
        \Big) \,.
    \end{multline}
    The sum telescopes. Since the last value of $k$ is $-z-1$, this proves the claim.
\end{proof}

\subsection{Three markings}
\label{sec: 3H n3 proof}
To prove \eqref{eq: 3H n3 z} and \eqref{eq: 3H n3 zz}, we follow a similar strategy and evaluate $\HodgeH(z_1,z_2,1;1,a,-(a+1);\ri u ) = \langle \sfA(z_1,a,u)\, \sfA(z_2,a,u)\, \sfA(1,a,u) \rangle^{\circ}$, initially for $z_1,z_2\in\bZ_{>0}$. Expanding the $\sfA$-operators gives
\begin{equation}
\begin{split}
    &\HodgeH(z_1,z_2,1;1,a,-(a+1);\ri u)
    =
    \frac{
        \prod_{j=1}^2\gamfct{-z_j,-az_j,(a+1)z_j}{u}
    }{
        u^5a^3(a+1)^3z_1z_2
    }
    \frac{
        \cs(uaz_1)\cs(uaz_2)\cs(ua)
    }{
        \cs(u)
    } \\
    &\qquad \times
    \sum_{k_1,k_2,k_3\in\bZ}
    \frac{
        (q^{(a+1)z_1};q)_{k_1}
        (q^{(a+1)z_2};q)_{k_2}
        (q^{a+1};q)_{k_3}
    }{
        (q^{z_1+1};q)_{k_1}
        (q^{z_2+1};q)_{k_2}
        (q^2;q)_{k_3}
    }
    \left\langle
        \cE_{k_1}(uaz_1)\,
        \cE_{k_2}(uaz_2)\,
        \cE_{k_3}(ua)
    \right\rangle^{\circ} \,,
\end{split}
\end{equation}
where we used $\gamfct{-1,-a,a+1}{u}=1/\cS(u)$. Now $1/(q^2;q)_{k_3}=0$ for $k_3 < -1$, while \cref{lem:VEV} implies that the expectation value vanishes for $k_3 \geq 0$. Thus, only $k_3 = -1$ contributes. Using again \cref{lem:VEV}, together with $\cs(ua) (q^{a+1};q)_{-1} = \cs(u) (q^2;q)_{-1}$, and setting $k=k_1$, $k_2=1-k$, we obtain
\begin{equation}
\label{eq: Hodge vs q hypergeom n3}
\begin{split}
    &\HodgeH(z_1,z_2,1;1,a,-(a+1);\ri u)
    =
    \frac{
        \prod_{j=1}^2\gamfct{-z_j,-az_j,(a+1)z_j}{u}
    }{
        u^5a^3(a+1)^3z_1z_2
    }
    \frac{
        \cs(uaz_1)\cs(uaz_2)
        \cs(u(a+1)z_2)
    }{
        \cs(u(z_2+1))
    }
    \\
    &\quad \times
    \sum_{k>0}
    \frac{
        (q^{(a+1)z_1};q)_k
        (q^{-(z_2+1)};q)_k
    }{
        (q^{z_1+1};q)_k
        (q^{-(a+1)z_2};q)_k
    }
    \cs\bigl(uak(z_1+z_2)-uaz_1\bigr) \,.
\end{split}
\end{equation}
For $z_2\in\bZ_{>0}$, the sum on the right-hand side truncates, and the resulting expression extends to arbitrary $z_1$.

\subsubsection{The specialization \texorpdfstring{$z_1=-1/(a+1)$}{z1=-1/(a+1)}.}
For $z_1=-1/(a+1)$, the sum in \eqref{eq: Hodge vs q hypergeom n3} truncates at $k=1$, and using $\gamfct{1/(a+1),a/(a+1),-1}{u}  = \frac{1}{\cS(u)}$ we obtain
\begin{equation}
    \HodgeH\left(-\tfrac{1}{a+1},z,1;1,a,-(a+1);\ri u\right)
    =
    -\frac{
        \gamfct{-z,-az,(a+1)z}{u}
    }{
        u^4 a^3(a+1)^2 z
    }
    \cs(uaz)^2 \,.
\end{equation}
By homogeneity, \cref{eq: homog}, this is equivalent to \eqref{eq: 3H n3 z}.

\subsubsection{The \texorpdfstring{specialization $z_1+z_2=0$}{anti-diagonal specialization}.}
Set $z_2=-z$, initially with $-z\in\bZ_{>0}$. Then the sum in \eqref{eq: Hodge vs q hypergeom n3} truncates at $k=1-z$. Let $f_k(z_1,z_2)$ denote its $k$-th summand. As $z_1\to z$, all terms are regular except those with $k=-z,1-z$, which have simple poles whose
residues cancel. More precisely,
\begin{equation}
\begin{gathered}
    \sum_{k=1}^{-z-1} f_k(z,-z)
    =
    \frac{
        \cs(u(1-z))\cs(uaz)\cs(u(z+1))
    }{
        \cs(u)^2
    } \,, \\
    \lim_{z_1\to z}
    \bigl(
        f_{-z}(z_1,-z)+f_{1-z}(z_1,-z)
    \bigr)
    =
    \frac{
        \cs(u(1-z))\cs(uz)\cs(u(1-az))
    }{
        \cs(u)^2
    } \,.
\end{gathered}
\end{equation}
Using $\cs(uaz)\cs(u(z+1)) + \cs(uz)\cs(u(1-az)) = \cs(u(a+1)z)\cs(u),$ together with \eqref{eq: gamma op App B}, we obtain
\begin{equation}
    \HodgeH\left(z,-z,1;1,a,-(a+1);\ri u\right)
    =
    -\frac{z}{u^2a^2(a+1)^2}
    \frac{
        \cs(uaz)\cs(u(a+1)z)
    }{
        \cs(uz)\cs(u)
    } \,.
\end{equation}
By homogeneity \eqref{eq: homog}, this is equivalent to
\eqref{eq: 3H n3 zz}.

\subsection{Four markings}
We deduce \eqref{eq: 3H n4 z} and \eqref{eq: 3H n4 zz} from the vacuum expectation value
\begin{equation}
    \HodgeH\left(-\tfrac{1}{a+1},z_1,z_2,1;1,a,-(a+1);\ri u\right)
    =
    \left\langle
        \sfA\big(-\tfrac{1}{a+1},a,u\big)\,
        \sfA\big(z_1,a,u\big)\,
        \sfA\big(z_2,a,u\big)\,
        \sfA\big(1,a,u\big)
    \right\rangle^{\circ} \,.
\end{equation}
Expanding the $\sfA$-operators into a sum over $(k_1,k_2,k_3,k_4)$, the same vanishing arguments for the vacuum expectation values of the $\cE$-operators and Pochhammer symbols used in the previous sections show that only terms of the form $(1,k,-k,-1)$, with $k\in\bZ$, contribute. After the same elementary simplifications as in the previous sections, we obtain
\begin{multline}
\label{eq: Hodge vs q hypergeom n4}
    \HodgeH\left(-\tfrac{1}{a+1},z_1,z_2,1;1,a,-(a+1);\ri u\right)
    =
    -
    \frac{
        \prod_{j=1}^2
        \gamfct{-z_j,-az_j,(a+1)z_j}{u}
    }{
        u^6a^4(a+1)^3z_1z_2
    }
    \cs(uaz_1)\cs(uaz_2) \\
    \times
    \sum_{k\in\bZ}
    \frac{
        (q^{(a+1)z_1};q)_k
        (q^{-z_2};q)_k
    }{
        (q^{z_1+1};q)_k
        (q^{1-(a+1)z_2};q)_k
    }
    \left\langle
        \cE_1\big(-u\tfrac{a}{a+1}\big)\,
        \cE_k(uaz_1)\,
        \cE_{-k}(uaz_2)\,
        \cE_{-1}(ua)
    \right\rangle^{\circ} \,.
\end{multline}
For fixed $z_2 \in \bZ_{>0}$, this is an identity of power series in $u$
with coefficients rational in $z_1$.

\subsubsection{The specialization \texorpdfstring{$z_1=1/a$}{z1=1/a}.}
For $z_1=1/a$, \cref{lem:VEV} gives
\begin{equation}
    \left\langle
        \cE_1\bigl(-u\tfrac{a}{a+1}\bigr)\,
        \cE_k(u)\,
        \cE_{-k}(uaz)\,
        \cE_{-1}(ua)
    \right\rangle^{\circ}
    =
    \begin{cases}
        \cs(u)\cs(uaz) & k=0 \,,\\
        \cs\bigl(u(1+az)\bigr)\cs\bigl(uk(1+az)\bigr) & k>0 \,,\\
        0 & k<0 \,.
    \end{cases}
\end{equation}
Moreover, the first Pochhammer ratio in \eqref{eq: Hodge vs q hypergeom n4} cancels, while $\gamfct{-1/a,-1,(a+1)/a}{u} = \frac{1}{\cS(u)}$. We therefore obtain
\begin{multline}
    \label{eq: Hodge vs q hypergeom n4 2}
        \HodgeH\left(
            -\tfrac{1}{a+1},\tfrac{1}{a},z,1;
            1,a,-(a+1);\ri u
        \right)
        =
        -\frac{
            \gamfct{-z,-az,(a+1)z}{u}
        }{
            u^5 a^3(a+1)^3 z
        }
        \cs(uaz) \\
        \times \Bigg(
            \cs(u)\cs(uaz)
            +
            \cs\bigl(u(1+az)\bigr)
            \sum_{k\geq0}
            \frac{
                (q^{-z};q)_k
            }{
                (q^{1-(a+1)z};q)_k
            }
            \cs\bigl(uk(1+az)\bigr)
        \Bigg) \,.
\end{multline}
To evaluate the remaining sum, consider the $q$-hypergeometric series
\begin{equation}
    {}_2\phi_1
    \left(
        \begin{matrix}
            A,B\\
            C
        \end{matrix};
        q;w
    \right)
    \coloneqq
    \sum_{k\geq0}
    \frac{
        (A;q)_k (B;q)_k
    }{
        (C;q)_k (q;q)_k
    }
    w^k \,.
\end{equation}
Since $\cs(x)=\re^{x/2}-\re^{-x/2}$, the sum in \eqref{eq: Hodge vs q hypergeom n4 2} is
\begin{equation}
    {}_2\phi_1
    \left(
        \begin{matrix}
            q,q^{-z}\\
            q^{1-(a+1)z}
        \end{matrix};
        q;q^{(1+az)/2}
    \right)
    -
    {}_2\phi_1
    \left(
        \begin{matrix}
            q,q^{-z}\\
            q^{1-(a+1)z}
        \end{matrix};
        q;q^{-(1+az)/2}
    \right) \,.
\end{equation}
We evaluate these two terms using the $q$-Chu--Vandermonde identity \cite[Eq.~(1.5.2) \& (1.5.3)]{GR04:BasicHypergeomSeries}, which in our symmetrized convention reads, for $N \in \bZ_{>0}$,
\begin{equation}
\label{eq: Chu Vandermonde}
    {}_2\phi_1
    \left(
        \begin{matrix}
            A,q^{-N}\\
            C
        \end{matrix};
        q;
        \left(\frac{q^{N-1}C}{A}\right)^{\pm1/2}
    \right)
    =
    A^{\mp N/2}
    \frac{(C/A;q)_N}{(C;q)_N} \,.
\end{equation}
Applying \eqref{eq: Chu Vandermonde} with $A=q$, $C=q^{1-(a+1)z}$, and $N=z$, we obtain
\begin{multline}
    \HodgeH\left(
        -\tfrac{1}{a+1},\tfrac{1}{a},z,1;
        1,a,-(a+1);\ri u
    \right) \\
    =
    -
    \frac{
        \gamfct{-z,-az,(a+1)z}{u}
    }{
        u^5 a^3(a+1)^3 z
    }
    \Bigl(
        \cs(u)\cs(uaz)^2
        +
        \cs(uz)
        \cs\bigl(u(1+az)\bigr)
        \cs\bigl(u(a+1)z\bigr)
    \Bigr) \,.
\end{multline}
By the homogeneity relation \eqref{eq: homog}, this is equivalent to
\eqref{eq: 3H n4 z}.

\subsubsection{The \texorpdfstring{specialization $z_1+z_2=0$}{anti-diagonal specialization}.}
For general $z_1$ and $z_2$, the vacuum expectation value of the operators in \eqref{eq: Hodge vs q hypergeom n4} is rather complicated. To take the limit $z_1 \to -z_2$, however, it suffices to observe that
\begin{equation}
    \left\langle
        \cE_1\bigl(-u\tfrac{a}{a+1}\bigr)\,
        \cE_k(uaz_1)\,
        \cE_{-k}(uaz_2)\,
        \cE_{-1}(ua)
    \right\rangle^{\circ}
    =
    \begin{cases}
        \cs(uaz_1)\cs(uaz_2) & k=0 \,,\\
        O\bigl((z_1+z_2)^2\bigr) & k>0 \,,\\
        0 & k<0 \,.
    \end{cases}
\end{equation}
Since the Pochhammer ratio in \eqref{eq: Hodge vs q hypergeom n4} has at most a simple pole as $z_1\to-z_2$, only the term $k=0$ contributes to the limit. Setting $z_2=-z$ and using \eqref{eq: gamma op App B}, we obtain
\begin{equation}
    \HodgeH\left(
        -\tfrac{1}{a+1},z,-z,1;
        1,a,-(a+1);\ri u
    \right)
    =
    \frac{z}{u^3a^3(a+1)^2}
    \frac{
        \cs(uaz)^3
    }{
        \cs(uz)\cs\bigl(u(a+1)z\bigr)
    } \,.
\end{equation}
By the homogeneity relation \eqref{eq: homog}, this is equivalent to
\eqref{eq: 3H n4 zz}.

\subsection{Five markings}
To prove \eqref{eq: 3H n5 zz}, we consider the vacuum expectation value
\begin{multline}
    \HodgeH\left(
        -\tfrac{1}{a+1},z_1,z_2,z_3,1;
        1,a,-(a+1);\ri u
    \right) \\
    =
    \left\langle
        \sfA\bigl(-\tfrac{1}{a+1},a,u\bigr)\,
        \sfA(z_1,a,u)\,
        \sfA(z_2,a,u)\,
        \sfA(z_3,a,u)\,
        \sfA(1,a,u)
    \right\rangle^{\circ} \,.
\end{multline}
The same vanishing arguments and elementary simplifications used in the previous sections show that only terms of the form $(1,k_1,k_2-k_1,-k_2,-1)$ contribute. We therefore obtain
\begin{equation}
    \label{eq: Hodge vs q hypergeom n5}
    \begin{split}
        &\HodgeH\left(
            -\tfrac{1}{a+1},z_1,z_2,z_3,1;
            1,a,-(a+1);\ri u
        \right)
        =
        -\frac{
            \prod_{j=1}^3
            \gamfct{-z_j,-az_j,(a+1)z_j}{u}
        }{
            u^8a^5(a+1)^4z_1z_2z_3
        } \\
        &\qquad\qquad\qquad\qquad\quad \times
        \prod_{j=1}^3\cs(uaz_j)
        \sum_{k_1,k_2\in\bZ}
        \frac{
            (q^{(a+1)z_1};q)_{k_1}
            (q^{(a+1)z_2};q)_{k_2-k_1}
            (q^{(a+1)z_3};q)_{-k_2}
        }{
            (q^{z_1+1};q)_{k_1}
            (q^{z_2+1};q)_{k_2-k_1}
            (q^{z_3+1};q)_{-k_2}
        } \\
        &\qquad\qquad\qquad\qquad\qquad\quad \times
        \left\langle
            \cE_1\bigl(-u\tfrac{a}{a+1}\bigr)\,
            \cE_{k_1}(uaz_1)\,
            \cE_{k_2-k_1}(uaz_2)\,
            \cE_{-k_2}(uaz_3)\,
            \cE_{-1}(ua)
        \right\rangle^{\circ} \,.
    \end{split}
\end{equation}
For fixed $z_2,z_3\in\bZ_{>0}$, \cref{lem:VEV} and the vanishing of the Pochhammer symbols imply that the sum is supported on $0\leq k_1\leq z_2+z_3$ and $0\leq k_2\leq z_3$. Hence, \eqref{eq: Hodge vs q hypergeom n5} is an identity of power series in $u$ with coefficients rational in $z_1$, and we may specialize to $z_1=1/a$.

At this value, the first Pochhammer ratio cancels and $\gamfct{-1/a,-1,(a+1)/a}{u}=1/\cS(u)$. We thus obtain
\begin{equation}
    \label{eq: Hodge vs q hypergeom n5 2}
    \begin{split}
        &\HodgeH\left(
            -\tfrac{1}{a+1},\tfrac{1}{a},z_2,z_3,1;
            1,a,-(a+1);\ri u
        \right)
        =
        -\frac{
            \prod_{j=2}^3
            \gamfct{-z_j,-az_j,(a+1)z_j}{u}
        }{
            u^7a^4(a+1)^4z_2z_3
        }
        \\
        &\qquad\qquad\qquad\qquad\qquad \times
        \cs(uaz_2)\cs(uaz_3)
        \sum_{k_1,k_2\geq0}
        \frac{
            (q^{(a+1)z_2};q)_{k_2-k_1}
            (q^{-z_3};q)_{k_2}
        }{
            (q^{z_2+1};q)_{k_2-k_1}
            (q^{1-(a+1)z_3};q)_{k_2}
        } \\
        &\qquad\qquad\qquad\qquad\qquad\quad \times
        \left\langle
            \cE_1\bigl(-u\tfrac{a}{a+1}\bigr)\,
            \cE_{k_1}(u)\,
            \cE_{k_2-k_1}(uaz_2)\,
            \cE_{-k_2}(uaz_3)\,
            \cE_{-1}(ua)
        \right\rangle^{\circ} \,.
    \end{split}
\end{equation}
Evaluating the last vacuum expectation value using \cref{lem:VEV}, we can write it as
\begin{multline}
        \left\langle
            \cE_1\bigl(-u\tfrac{a}{a+1}\bigr)\,
            \cE_{k_1}(u)\,
            \cE_{k_2-k_1}(auz_2)\,
            \cE_{-k_2}(auz_3)\,
            \cE_{-1}(au)
        \right\rangle^{\circ} \\
        =
        A\,\delta_{k_1>0}\delta_{k_2\geq0}
        +
        B\,\delta_{k_1>0}\delta_{k_2>k_1}
        +
        C\,\delta_{k_1,0}\delta_{k_2>0}
        +
        D\,\delta_{k_1>0}\delta_{k_2,0}
        +
        E\,\delta_{k_1>0}\delta_{k_2,k_1}
        +
        F\,\delta_{k_1,0}\delta_{k_2,0} \,,
\end{multline}
where
\begin{equation}
\begin{aligned}
    A
    &=
    \cs\bigl(u(1+a(z_2+z_3))\bigr)\,
    \cs\bigl(uk_2(1+a(z_2+z_3))\bigr)\,
    \cs\bigl(u(k_1-k_2+k_1(1+az_2))\bigr) \,, \\
    B
    &=
    \cs\bigl(u(1+a(z_2+z_3))\bigr)\,
    \cs\bigl(u(k_2-k_1)(1+a(z_2+z_3))\bigr)\,
    \cs\bigl(u(k_1az_3 + k_2)\bigr) \,, \\
    C
    &=
    \cs(u)\,
    \cs\bigl(ua(z_2+z_3)\bigr)\,
    \cs\bigl(uak_2(z_2+z_3)\bigr) \,, \\
    D
    &=
    \cs(uaz_3)\,
    \cs\bigl(u(1+az_2)\bigr)\,
    \cs\bigl(uk_1(1+az_2)\bigr) \,, \\
    E
    &=
    \cs(uaz_2)\,
    \cs\bigl(u(1+az_3)\bigr)\,
    \cs\bigl(uk_1(1+az_3)\bigr) \,, \\
    F
    &=
    \cs(u)\,
    \cs(uaz_2)\,
    \cs(uaz_3) \,.
\end{aligned}
\end{equation}
We denote the contribution of each of these terms to the sum in
\eqref{eq: Hodge vs q hypergeom n5 2} by the corresponding calligraphic letter. For instance,
\begin{equation}
    \cA
    \coloneqq
    \sum_{k_1>0, \ k_2\geq0}
    \frac{
        (q^{(a+1)z_2};q)_{k_2-k_1}
        (q^{-z_3};q)_{k_2}
    }{
        (q^{z_2+1};q)_{k_2-k_1}
        (q^{1-(a+1)z_3};q)_{k_2}
    } \,
    A \,.
\end{equation}
We now evaluate the six contributions $\cA,\ldots,\cF$ and determine their limits as $z_2\to -z_3$. The last one is immediate: $\cF = F = \cs(u)\cs(uaz_2)\cs(uaz_3)$. The remaining terms require a little more work.

\subsubsection{The calculation of \texorpdfstring{$\cA+\cB$}{A+B}.}
We begin with $\cA$. For fixed $k_2$, we use the identity
\begin{equation}
    \frac{(q^{(a+1)z_2};q)_{k_2-k_1}}{(q^{z_2+1};q)_{k_2-k_1}}
    =
    \frac{(q^{(a+1)z_2};q)_{k_2}}{(q^{z_2+1};q)_{k_2}}
    \frac{(q^{-z_2-k_2};q)_{k_1}}{(q^{1-(a+1)z_2-k_2};q)_{k_1}} \,,
\end{equation}
which separates the dependence on $k_1$. The resulting sum over $k_1>0$ can then be written as the difference of two (terminating) ${}_2\phi_1$ series. Applying the $q$-Chu--Vandermonde identity \eqref{eq: Chu Vandermonde} and simplifying the resulting $q$-Pochhammer ratios gives
\begin{multline}
\label{eq: A sum final}
    \cA
    =
    \frac{
        \cs(u(1+a(z_2+z_3)))\,\cs(u(a+1)z_2)
    }{
        \cs(auz_2)
    } \\
    \times
    \sum_{k_2=0}^{z_3}
    \frac{
        (q^{(a+1)z_2};q)_{k_2}\,(q^{-z_3};q)_{k_2}
    }{
        (q^{z_2+1};q)_{k_2}\,(q^{1-(a+1)z_3};q)_{k_2}
    }
    \cs(uk_2(1+a(z_2+z_3)))\,
    \cs(u(z_2+k_2)) \,.
\end{multline}
A similar calculation for $\cB$ gives
\begin{multline}
\label{eq: B sum final}
    \cB
    =
    \frac{
        \cs(u(1+a(z_2+z_3)))\,\cs(u(a+1)z_3)
    }{
        \cs(auz_3)
    } \\
    \times
    \sum_{k_2=1}^{z_3-1}
    \frac{
        (q^{(a+1)z_2};q)_{k_2}\,(q^{-z_3};q)_{k_2}
    }{
        (q^{z_2+1};q)_{k_2}\,(q^{1-(a+1)z_3};q)_{k_2}
    }
    \cs(uk_2(1+a(z_2+z_3)))\,
    \cs(u(z_3-k_2)) \,.
\end{multline}
Here we use crucially that, for $z_3\in\bZ_{>0}$, the sum over $k_2$ terminates at $k_2=z_3-1$ because of the factor $\cs(u(z_3-k_2))$. We now combine the two contributions:
\begin{equation}
\begin{split}
    \cA+\cB
    &=
    \cs\bigl(u(1+a(z_2+z_3))\bigr)
    \sum_{k=1}^{z_3-1}
        \frac{(q^{(a+1)z_2};q)_{k} \, (q^{-z_3};q)_{k}}{(q^{z_2+1};q)_{k} \, (q^{1-(a+1)z_3};q)_{k}}
        \cs\bigl(uk(1+a(z_2+z_3))\bigr)
    \\
    &\qquad\qquad\times
    \left(
        \frac{\cs(u(a+1)z_2)\,\cs(u(z_2+k))}{\cs(uaz_2)}
        +
        \frac{\cs(u(a+1)z_3)\,\cs(u(z_3-k))}{\cs(uaz_3)}
    \right)
    \\
    &\quad
    +
    \frac{
        \cs(u(1+a(z_2+z_3)))\,\cs(u(a+1)z_2)
    }{
        \cs(uaz_2)
    }
    \frac{
        (q^{(a+1)z_2};q)_{z_3}\,(q^{-z_3};q)_{z_3}
    }{
        (q^{z_2+1};q)_{z_3}\,(q^{1-(a+1)z_3};q)_{z_3}
    }
    \\
    &\qquad\qquad\times
    \cs(uz_3(1+a(z_2+z_3)))\,
    \cs(u(z_2+z_3)) \,.
\end{split}
\end{equation}
Ultimately, we only need the limit $z_2\to z$, with $z_3=-z$. For every
$1\leq k\leq -z-1$, the expression in parentheses vanishes in this limit,
while all the remaining factors are regular. Hence all terms in the sum
vanish, and only the final term contributes. Taking the limit yields
\begin{equation}
\label{eq: AB term final}
\begin{split}
    \lim_{z_2\to z} \bigl( \cA+\cB \bigr)
    &=
    -\frac{\cs(u)\,\cs(u(a+1)z)}{\cs(uaz)}
    \frac{
        (q^{(a+1)z};q)_{-z}\,(q^{z};q)_{-z}
    }{
        (q^{z+1};q)_{-z-1}\,(q^{1+(a+1)z};q)_{-z}
    }
    \cs(uz) \\
    &=
    -\frac{
        \cs(u)\,\cs(uz)^2\,\cs(u(a+1)z)^2
    }{
        \cs(uaz)^2
    } \,.
\end{split}
\end{equation}

\subsubsection{The calculation of \texorpdfstring{$\cC$}{C}.}
We next consider
\begin{equation}
    \cC
    =
    \cs(u)\,\cs\bigl(ua(z_2+z_3)\bigr)
    \sum_{k>0}
    \frac{
        (q^{(a+1)z_2};q)_{k}\,(q^{-z_3};q)_{k}
    }{
        (q^{z_2+1};q)_{k}\,(q^{1-(a+1)z_3};q)_{k}
    }
    \cs\bigl(uak(z_2+z_3)\bigr) \,.
\end{equation}
For fixed $z_3\in\bZ_{>0}$, the factor $(q^{-z_3};q)_{k}$ truncates the sum at $k=z_3$. We may therefore study the limit $z_2\to-z_3$ term by term.

For $1\leq k<z_3$, all $q$-Pochhammer ratios are regular at $z_2=-z_3$, while
\begin{equation}
    \cs(ua(z_2+z_3))\,
    \cs(uak(z_2+z_3))
    =
    u^2 a^2 k (z_2+z_3)^2
    +O\bigl((z_2+z_3)^4\bigr) \,.
\end{equation}
Hence these terms vanish in the limit. The only possible singularity occurs for $k=z_3$, where $(q^{z_2+1};q)_{z_3}$ has a simple zero at $z_2=-z_3$. This produces at most a simple pole, which is again killed by the quadratic zero above. We conclude that, as $z_2\to z$, with $z_3=-z$,
\begin{equation}
    \label{eq: C term final}
    \lim_{z_2 \to z} \cC=0 \,.
\end{equation}

\subsubsection{The calculation of \texorpdfstring{$\cD$}{D}.}
The calculation of $\cD$ is similar to that of $\cA$. We first use
\begin{equation}
    \frac{
        (q^{(a+1)z_2};q)_{-k_1}
    }{
        (q^{z_2+1};q)_{-k_1}
    }
    =
    \frac{
        (q^{-z_2};q)_{k_1}
    }{
        (q^{1-(a+1)z_2};q)_{k_1}
    } \,,
\end{equation}
and the resulting sum can again be expressed as the difference of two terminating ${}_2\phi_1$ series. Applying the $q$-Chu--Vandermonde identity \eqref{eq: Chu Vandermonde} gives
\begin{equation}
\label{eq: D term final}
    \cD
    =
    \frac{
        \cs(uaz_3)\,\cs(u(1+az_2))\,\cs(uz_2)\,\cs(u(a+1)z_2)
    }{
        \cs(uaz_2)
    } \,.
\end{equation}

\subsubsection{The calculation of \texorpdfstring{$\cE$}{E}.}
The calculation of $\cE$ is completely analogous to that of $\cD$, and gives
\begin{equation}
\label{eq: E term final}
    \cE
    =
    \frac{
        \cs(auz_2)\,\cs(u(1+az_3))\,\cs(uz_3)\,\cs(u(a+1)z_3)
    }{
        \cs(auz_3)
    } \,.
\end{equation}

\subsubsection{Conclusion.}
We finally combine the six contributions. Setting $z_3=-z$ and taking the limit $z_2\to z$, the preceding calculations give
\begin{multline}
    \lim_{z_2\to z}
    \bigl(
        \cA+\cB+\cC+\cD+\cE+\cF
    \bigr)
    =
    -\frac{\cs(u)\,\cs(uz)^2\,\cs(u(a+1)z)^2}{\cs(uaz)^2}
    -\cs(u(1+az))\,\cs(uz)\,\cs(u(a+1)z)
    \\
    -\cs(u(1-az))\,\cs(uz)\,\cs(u(a+1)z)
    -\cs(u)\,\cs(uaz)^2 \,.
\end{multline}
With the prefactor from \cref{eq: Hodge vs q hypergeom n5 2}, we therefore obtain
\begin{equation}
\begin{split}
    &\HodgeH\left(
        -\tfrac{1}{a+1},
        \tfrac{1}{a},
        z,-z,1;
        1,a,-(a+1);
        \ri u
    \right)
    \\
    & =
    -\frac{z}{a^3(a+1)^3u^4}
    \frac{\cs(uaz)}
    {\cs(uz)\,\cs(u(a+1)z)}
    \lim_{z_2\to z}
    \bigl(
        \cA+\cB+\cC+\cD+\cE+\cF
    \bigr)
    \\
    & =
    \frac{z^4}{a^2(a+1)^2}
    \cS(u)\,
    \cS(uz)\,
    \cS(uaz)\,
    \cS(u(a+1)z)
    \left[
        \left(
            \kappa(uz)
            +
            \kappa(uaz)
            -
            \kappa(u(a+1)z)
        \right)^2
        -
        \frac{1}{4}
    \right] \,.
\end{split}
\end{equation}
By homogeneity \eqref{eq: homog}, this is precisely \eqref{eq: 3H n5 zz}.

\begingroup
\setlength{\emergencystretch}{.5em}
\renewcommand*{\bibfont}{\footnotesize}
\printbibliography\medskip
\endgroup

\end{document}